\documentclass[12pt,a4paper]{amsart}

\usepackage[all]{xy}
\usepackage{latexsym}
\usepackage{amssymb}
\usepackage{amsmath}
\usepackage{amsthm}
\usepackage{mathrsfs}
\usepackage{color}
\usepackage{exscale}
\usepackage[english]{babel}
\usepackage{verbatim}
\usepackage{fullpage}
\usepackage{fix-cm}
\usepackage{enumitem}
\usepackage[colorlinks, breaklinks, allcolors=blue]{hyperref} 

\makeatletter
\def\@settitle{\begin{flushleft}%
  \baselineskip14\p@\relax
    \normalfont\Large\bf%<- NEW
  \@title
  \end{flushleft}%
}
\def\section{\@startsection{section}{1}%
  \z@{.7\linespacing\@plus\linespacing}{.5\linespacing}%
  {\normalfont\bf}}

\def\@setauthors{%
  \begingroup
  \def\thanks{\protect\thanks@warning}%
  \trivlist
  \footnotesize \@topsep30\p@\relax
  \advance\@topsep by -\baselineskip
  \item\relax
  \author@andify\authors
  \def\\{\protect\linebreak}%
  \larger\sc\authors%
  \ifx\@empty\contribs
  \else
    ,\penalty-3 \space \@setcontribs
    \@closetoccontribs
  \fi
  \endtrivlist
  \endgroup
}

\makeatother

\swapnumbers
\theoremstyle{plain}
\newtheorem{theorem}{Theorem}[section]
\newtheorem{lemma}[theorem]{Lemma}
\newtheorem{proposition}[theorem]{Proposition}
\newtheorem{corollary}[theorem]{Corollary}

\theoremstyle{definition}
\newtheorem{definition}[theorem]{Definition}
\newtheorem{remark}[theorem]{Remark}
\newtheorem{example}[theorem]{Example}

\newcommand{\CC}{\mathbb{C}}
\renewcommand{\AA}{\mathbb{A}}
\newcommand{\ZZ}{\mathbb{Z}}
\newcommand{\QQ}{\mathbb{Q}}

\newcommand{\PP}{\mathbb{P}}

\newcommand{\inv}{{}^{-1}}
\newcommand{\<}{\langle}
\renewcommand{\>}{\rangle}

\newcommand{\fp}{\mathfrak{p}}

\newcommand{\fv}{\mathfrak{v}}
\newcommand{\fr}{\mathfrak{r}}

\newcommand{\fj}{\mathfrak{j}}
\newcommand{\fq}{\mathfrak{q}}

\DeclareMathOperator{\Hom}{Hom}

\DeclareMathOperator{\Span}{span}

\DeclareMathOperator{\Exp}{Exp}

\DeclareMathOperator{\Ord}{ord}

\DeclareMathOperator{\diag}{diag}
\DeclareMathOperator{\op}{op}

\newcommand{\Div}{{\mathcal D}}
\newcommand{\Fan}{{\mathscr F}}

\newcommand{\Colemb}{{\mathcal F}}
\newcommand{\Cone}{{\mathcal C}}
\renewcommand{\O}{{\mathcal O}}

\newcommand{\SL}{\mathrm{SL}}
\newcommand{\GL}{\mathrm{GL}}

\newcommand{\SO}{\mathrm{SO}}
\newcommand{\Bl}{\mathrm{Bl}}

\newcommand{\M}{{\mathcal M}}
\newcommand{\T}{{\mathcal T}}

\usepackage{parskip}

\usepackage{microtype}

\begin{document}
\title{The Knop-Luna-Vust theory of spherical embeddings, extended to non-reductive groups}
\author{Guido Pezzini}
\address{Dipartimento di Matematica ``G.\ Castelnuovo'', Universit\`a degli Studi di Roma ``La Sapienza'', Piazzale A.\ Moro 5, 00185 Roma, Italy}
\email{pezzini@mat.uniroma1.it}
\subjclass[2000]{14M27, 14M17, 14J50}

\begin{abstract}
We extend the theory of spherical embeddings to actions of connected non-reductive groups. This generalization is formally very similar to the usual reductive case:  equivariant embeddings are described essentially by collections of convex cones in a rational vector space.

We also show some new relationship between the combinatorics emerging in this case and the properties of the unipotent radical of the acting group. Finally, we apply our techniques to prove a characterization of log homogeneous varieties.
\end{abstract}

\maketitle
\section{Introduction}

In 1983 Luna and Vust developed a general theory of equivariant embeddings of varieties that are homogeneous under the action of a connected reductive algebraic group $G$, in the paper~\cite{LV83}. This theory has been reformulated and simplified by F.\ Knop in 1991 in the paper~\cite{Kn91} in the special case of spherical varieties, which are a generalization of flag varieties, symmetric varieties, and also of toric varieties.

Formally, there is an evident similarity with the classification of toric varieties by means of their fans, which are finite collections of polyhedral convex cones in a rational vector space and where each cone corresponds to an orbit. For a spherical homogeneous variety $Y$, any $G$-equivariant open embedding $X$ of $Y$ is described by a similar finite set $\Fan(X)$, called {\em colored fan}.

Its elements are still polyhedral convex cones, but they are called {\em colored cones} because they are equipped (in slightly loose terms) with marked points, choosen among a finite list of special points which depend only on $Y$. In addition, the relative interior of each colored cone must intersect a given polyhedral cone, called {\em the valuation cone}, which again depends only on $Y$. The points of this cone correspond to discrete valuations of the field $\CC(Y)$ that are $G$-invariant, for example the valuation given by a $G$-stable prime divisor in some embedding.

In this paper we extend this theory to the case where $Y$ is homogeneous under the action of a connected linear algebraic group $P$, not necessarily reductive. The basic assumption is still that $Y$ be spherical, under the action of a Levi subgroup $G$ of $P$. So the standard theory of spherical embeddings is available, and one must take into account the action of the bigger group $P$.

The end result is that the theory looks quite similar to the classical reductive case. Given a $P$-equivariant embedding $X$ of $Y$, the first task is to study its $P$-orbits. This is equivalent to studying the subfamily $\Fan(X)^\uparrow$ of the colored cones that correspond to $G$-orbits that have $P$-stable closure (see Proposition~\ref{prop:PequivariantLorbit} below). We introduce the notion of {\em colored $P$-fan} based on the properties of $\Fan(X)^\uparrow$.

Then, several statements of~\cite{Kn91} generalize smoothly with everything now reformulated in terms of $\Fan(X)^\uparrow$ instead of $\Fan(X)$. The main result of the paper, Theorem~\ref{thm:classification}, states that $P$-equivariant embeddings up to isomorphisms are classified by colored $P$-fans. The condition for the identity $Y\to Y$ to extend to a $P$-equivariant map $X_1\to X_2$ between two $P$-equivariant embeddings is the same as in the reductive case, but expressed only in terms of $\Fan(X_1)^\uparrow$ and $\Fan(X_2)^\uparrow$.

The valuation cone $V_G(Y)$ is also replaced by the subset $V_P(Y)$ corresponding to $P$-invariant valuations. This is again a convex polyhedral cone, and it is used also to describe $P$-equivariant maps between different $P$-homogeneous varieties, in the same way as in the reductive case.

Some parts of this theory had been already developed in our previous paper~\cite{Pe17}, including the construction of the cone $V_P(Y)$, so here we build on that work to complete the picture with the full embedding theory. On the other hand, it is remarkable that several proofs in this paper can simply follow the lines of corresponding proofs in Knop's paper~\cite{Kn91}, which essentially turn out to work also with non-reductive groups.

Beyond these many similarities, a novelty is discussed in Section~\ref{s:PuSigma}: we show that the shape of the $P$-invariant valuation cone $V_P(Y)$ is related to some properties of the action of the unipotent radical $P^u$ on $Y$. 

This relationship has applications even in the reductive case. Indeed, suppose that a $G$-homogeneous variety $Y$ admits a $G$-equivariant map $\varphi\colon Y\to G/Q$ to a partial flag variety $G/Q$, so $Q$ is a parabolic subgroup of $G$. Then the equality $V_G(Y)=V_Q(\varphi\inv(eQ))$ was proved in~\cite{Pe17}, and in this paper we use it to relate $V_G(Y)$ to the action of the unipotent radical $Q^u$ on $Y$.

This provides new informations on the cone $V_G(Y)$ from the knowledge of a generic stabilizer of $Y$. In general, computing $V_G(Y)$ from the stabilizer is reputed to be a rather difficult problem, solved only under additional assumptions (see Avdeev's paper~\cite{Av24}) or in special cases with laborious case-by-case considerations (see e.g. our paper~\cite{BP15} with P.\ Bravi).

A last analogy with the reductive case is discussed in Section~\ref{s:tails}. For an affine spherical $G$-variety $X$, the valuation cone has an important relationship with the multiplication in the ring of regular functions $\CC[X]$. We prove in Theorem~\ref{thm:Ptails} that if the $G$-action on $X$ extends to $P$ then a similar relationship holds with the cone of $P$-invariant valuations, but where the action of $P$ on $\CC[X]$ is considered together with the multiplication.

Using this result we are able to check in examples that $V_P(X)$ is not always cosimplicial: its maximal faces can have linearly dependent equations, see Example~\ref{ex:noncosimplicial}. This is another relevant novelty, because on the contrary $V_G(X)$ is always a Weyl chamber of a Weyl group (thanks to an important theorem of Brion's), generalizing the little Weyl group of symmetric varieties to arbitrary spherical $G$-varieties. So, under this point of view there is no analogue of the little Weyl group for the action of $P$.

In Section~\ref{s:Plog} we apply our theory to log homogeneous varieties, introduced by Brion in~\cite{Br07}. We first recall how they are related to the so-called {\em regular embeddings} (see the paper~\cite{BDP90} by Bifet, De Concini, Procesi), thanks to results of Brion's. Then we prove a characterization of varieties that are log homogeneous under the action of $P$ in Theorem~\ref{thm:Plog}.

\subsection*{Acknowledgments}
 I thank Friedrich Knop, Bart Van Steirteghem, and Simone Diverio for stimulating discussions and suggestions.

\section{Notations and basic assumptions}\label{s:notations}

%If $G$ is an affine algebraic group then we denote by $\Chars(G)$ the group of its characters, by $G^u$ its unipotent radical, by $G^\circ$ the connected component containing the neutral element $e\in G$. If $K$ is a subgroup of $G$ and $g\in G$, we set ${}^gK= gKg^{-1}$. If $V$ is a $G$-module then $V^{(G)}$ denotes the set of common eigenvectors of all elements of $G$ (in particular $0\notin V^{(G)}$), and if $\chi\in\Chars(G)$ then we set
%\[
%V^{(G)}_\chi = \{ v\in V\smallsetminus \{0\} \;|\; gv = \chi(g)v \;\forall g\in G \}
%\]
%Unless otherwise stated, we denote the Lie algebra of a group with the corresponding lower case fraktur letter.

In this paper all algebraic groups and varieties are defined over the field $\CC$ of complex numbers and varieties are always assumed to be irreducible. Subgroups of algebraic groups are always assumed to be closed.

We fix a connected linear algebraic group $P$, a Levi (i.e.\ maximal connected reductive) subgroup $G\subseteq P$, a Borel subgroup $B$ of $G$, a maximal torus $T$ of $B$, and we denote by $B_-$ the opposite Borel subgroup with respect to $T$. The set of simple roots of $G$ corresponding to these choices will be denoted by $S$.

In this paper we will consider varieties $X$ that are {\em spherical} under the action of $G$, i.e., such that $X$ is normal and has an open $B$-orbit. For details about the definitions and results mentioned in the rest of this section we refer to~\cite{Kn91}.

We will denote by $\Div(X)^B$ the finite set of $B$-stable prime divisors of $X$; the sets $\Div(X)^P$ and $\Div(X)^G$ are defined analogously. We define the set
\[
\Delta(X)=\Div(X)^B\smallsetminus \Div(X)^G
\]
and call its elements the {\em colors} of $X$.

We denote by $\CC(X)^{(B)}$ the set of all $B$-eigenvectors in $\CC(X)$, and we denote by $\Xi(X)$ the group of their $B$-eigenvalues $\chi\colon B\to \CC^*$. It is a free abelian group of finite rank, which is also called the {\em rank} of $X$. We define the vector space
\[
N(X) =  \Hom_\ZZ(\Xi(X),\QQ).
\]
The usual pairing between $v\in N(X)$ and $\xi\in \Xi(X)$ will be denoted by $\<\xi,v\>$.

The value of any discrete valuation%
\footnote{By this we mean a valuation $v\colon \CC(X)\smallsetminus\{0\}\to \QQ$ that is zero on the constants and has discrete image.} $v$ of $\CC(X)$ on a $B$-eigenvector $f\in \CC(X)^{(B)}$ depends only on the $B$-eigenvalue of $f$, thus $v$ yields a well-defined element $\rho(v)$ of $N(X)$. This applies in particular to the valuation given by any $B$-stable prime divisor $D$ of $X$, and in this case we denote simply by $\rho(D)$ the corresponding element of $N(X)$.

%If $K$ is the stabilizer in $L$ of a point in the open $L$-orbit of $X$, then $X$ is called an {\em embedding} of its open orbit $L/K$. Intersecting with $L/K$ gives a bijection between $\Delta_L(X)$ and $\Div(L/K)^B$.

There are only finitely many $G$-orbits $Z$ in $X$, each is itself spherical, and one associates with any such $Z$ the polyhedral (i.e.\ finitely generated) convex cone $\Cone_Z\subseteq N(X)$ generated by $\rho(D)$ for all $D\in\Div(X)^B$ such that $D\supseteq Z$. The {\em colored cone} associated with $Z$ is the couple $(\Cone_Z,\Colemb_Z)$ where $\Colemb_Z$ is the set of the colors of $X$ that contain $Z$. It turns out that for each $D\in\Div(X)^G$ containing $Z$, the element $\rho(D)$ generates an extremal ray of the cone $\Cone_Z$ not containing the image of any color.

The set of the colored cones of all $G$-orbits of $X$ is called the {\em colored fan} of $X$ and is denoted here by $\Fan(X)$. Two $G$-orbits $Z$, $W$ satisfy $\overline W\supseteq Z$ if and only if $\Cone_W$ is a face of $\Cone_Z$ and $\Colemb_W=\Colemb_Z\cap\rho^{-1}(\Cone_W)$. In this case we say that $(\Cone_W,\Colemb_W)$ is a {\em colored face} of $(\Cone_Z,\Colemb_Z)$.

The variety $X$ is called {\em $G$-toroidal} if any element of $\Div(X)^B$ containing a $G$-orbit is $G$-stable.

The set of $G$-invariant discrete valuations on $\CC(X)$ is identified via the map $\rho$ with its image $V_G(X)$ in $N(X)$, and $V_G(X)$ is a polyhedral convex cone of maximal dimension called the {\em valuation cone}. Here we will call it also the {\em $G$-valuation cone}; it satisfies $V_G(X)=N(X)$ if and only if a generic stabilizer of $X$ in $G$ contains a maximal unipotent subgroup of $G$.

The dual cone of $-V_G(X)$ in $\Xi(X)$ is generated by a set $\Sigma_G(X)$ of primitive, linearly independent elements. They are called the {\em spherical roots}, in this paper we add ``{\em of the $G$-action on $X$}''.

Finally we recall that there are combinatorial notions of ``abstract'' colored cones and fans in $N(X)$. They do not depend on $X$ in itself but involve the sets $V_G(X)$ and $\Delta(X)$, where the latter is considered as an abstract set equipped with the map $\rho$. They are used to classify all spherical $G$-varieties that contain the same homogeneous space $G/K$ as their open $G$-orbit.

The definition is as follows. A couple $(\Cone,\Colemb)$ is a {\em colored cone} in $N(X)$ if $\Colemb$ is a subset of $\Delta(X)$, the set $\Cone$ is a convex cone in $N(X)$ generated by $\rho(\Colemb)$ and finitely many elements of $V_G(X)$, and the relative interior $\Cone^\circ$ of $\Cone$ intersects $V_G(X)$. It is {\em strictly convex} if $\Cone$ is strictly convex and $\rho(\Colemb)$ does not contain $0$.

A {\em colored subspace} is a colored cone $(\Cone,\Colemb)$ such that $\Cone$ is a vector subspace. A {\em colored face} of a colored cone $(\Cone,\Colemb)$ is a colored cone $(\Cone',\Colemb')$ such that $\Cone'$ is a face of $\Cone$ and $\Colemb' = \Colemb\cap \rho\inv(\Cone')$.

A {\em colored fan} in $N(X)$ is a non-empty finite set $\Fan$ of colored cones in $N(X)$ such that $\Fan$ contains every colored face of every element of $\Fan$, and for every $v\in V_G(X)$ there is at most one $(\Cone,\Colemb)\in\Fan$ such that $v\in \Cone^\circ$. A colored fan is {\em strictly convex} if all its elements are strictly convex.

\section{Spherical $P$-varieties and the $P$-valuation cone}

We will work with spherical $G$-varieties such that the action of $G$ extends to the group $P$. Clearly they will have an open $P$-orbit, itself a spherical $G$-variety, which will be our starting point.

Therefore from now on through the rest of the paper we fix a closed subgroup $H\subseteq P$ such that $P/H$ is spherical.

\begin{definition}
\begin{enumerate}
\item A {\em $P$-embedding} of $P/H$ is a normal $P$-variety $X$ together with a $P$-equivariant open embedding $P/H\hookrightarrow X$. In this case we identify $P/H$ with its image in $X$, and we single out tacitly the point $x_0\in X$ corresponding to the point $eH\in P/H$.
\item  A $P$-embedding $X$ of $P/H$ is {\em $P$-toroidal} if any element of $\Div(X)^B$ containing a $P$-orbit is $P$-stable, and $X$ is called {\em $P$-simple} if it contains a unique closed $P$-orbit.
\item A {\em morphism of $P$-embeddings} of $P/H$ is a $P$-equivariant regular map $f\colon X\to X'$ extending the identity on $P/H$.
\end{enumerate}
\end{definition}

We recall the following definition.
\begin{definition}[{\cite[Definition~6.12]{Pe17}}]
\begin{enumerate}
\item Let $X$ be a complete $P$-toroidal embedding of $P/H$. We define the {\em $P$-valuation cone}
\[
V_P(P/H) = \bigcup_Z \Cone_Z
\]
where $Z$ varies in the set of $G$-orbits of $X$ such that $\overline Z$ is $P$-stable.
\item We define the set $\Sigma_P(P/H)$ of {\em spherical roots of the $P$-action on $P/H$} as the minimal set of primitive elements of $\Xi(P/H)$ such that
\[
V_P(P/H) = \{ v \in N(P/H) \;|\; \langle \sigma,v\rangle \leq 0 \;\forall\sigma\in \Sigma_P(P/H)\}.
\]
\item If $X$ is any $P$-embedding of $P/H$, we set
\[
V_P(X) = V_P(P/H),\qquad
\Sigma_P(X)=\Sigma_P(P/H)
\]
and we say that $X$ is {\em $P$-horospherical} if $V_P(X) = N(X)$, equivalently $\Sigma_P(X)=\varnothing$.
\end{enumerate}
\end{definition}

The definition of $V_P(P/H)$ does not depend on the choice of the complete $P$-toroidal embedding thanks to~\cite[Theorem~6.11]{Pe17}, and $V_P(P/H)$ is a convex polyhedral cone in $N(P/H)$ contained in $V_G(P/H)$ thanks to~\cite[Theorem~6.10]{Pe17}.

The inclusion $V_P(P/H)\subseteq V_G(P/H)$ corresponds under certain assumptions on $P$ to an inclusion $\Sigma_P(P/H)\supseteq \Sigma_G(P/H)$, see~\cite[Corollary~8.12]{Pe17}. In addition, we will see in Section~\ref{s:PuSigma} that the elements in $\Sigma_P(P/H)\smallsetminus \Sigma_G(P/H)$ have an important relationship with the unipotent radical $P^u$ of $P$.

In~\cite{Pe17} we were led to consider $V_P(X)$ because of the following proposition. It describes a situation where the $G$-valuation cone of $X$ can be recovered as the $Q$-valuation cone of a subvariety, where $Q$ is a parabolic subgroup of $G$.

\begin{proposition}[{\cite[Theorem~13.1]{Pe17}}]\label{prop:fiberoverflag}
Let $X$ be a spherical $G$-variety equipped with a $G$-equivariant regular map $\varphi\colon X\to G/Q$ where $Q$ is a parabolic subgroup of $G$. We may suppose $Q$ contains $B_-$, and let $M\subseteq Q$ be a Levi subgroup of $Q$ containing $T$. Then the fiber $X'=\varphi\inv(eQ)$ is a spherical $M$-variety satysfying the equalities
\[
V_G(X) = V_Q(X'),\qquad
\Sigma_G(X)=\Sigma_Q(X').
\]
\end{proposition}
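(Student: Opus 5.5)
The plan is to reduce everything to the parabolic induction structure $X\cong G\times^Q X'$. Since $\varphi$ is $G$-equivariant onto $G/Q$, the fiber $X'$ is $Q$-stable, and the map $G\times^Q X'\to X$, $[g,x]\mapsto gx$, is a $G$-equivariant isomorphism (standard for equivariant fibrations over homogeneous spaces). Sphericity of $X'$ under $M$ follows from this description. Since $B_-\subseteq Q$, the big cell $B Q$ is open in $G$ and $B\cap Q$ is a Borel subgroup of $M$ times $Q^u\cap B$. Hence the open $B$-orbit of $X$ meets $X'$ in an open $(B\cap Q)$-orbit, and an open $B_M$-orbit can be extracted using the $Q^u$-action. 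Normality of $X'$ follows because $X$ is locally $U\times X'$ for an open cell $U\cong Q^u_-$. The same description gives a restriction map $\CC(X)^{(B)}\to\CC(X')^{(B\cap M)}$ that is an isomorphism on weights, so $\Xi(X)=\Xi(X')$ and $N(X)=N(X')$, and all cones live in the same space.

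For the valuation cones, $V_G(X)=V_G(G/K)$ depends only on the open $G$-orbit, and likewise $V_Q(X')$ depends only on the open $Q$-orbit. So we may take $X$ to be a complete $G$-toroidal embedding of the open orbit $G/K$. By the correspondence above, $X'$ is then a complete $Q$-embedding of $Q/K'$. We then compare the $G$-orbits of $X$ with the $Q$-orbits of $X'$: $Z\mapsto Z\cap X'$ is a bijection between $G$-orbits of $X$ and $Q$-orbits of $X'$, and $G$-stable prime divisors correspond to $Q$-stable prime divisors with the same $\rho$-image. Colors of $X$ split into those pulled back from $G/Q$, which have zero image or are irrelevant to $V$, and those corresponding to $B_M$-stable non-$Q$-stable divisors of $X'$. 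Using this, $G$-toroidality of $X$ should translate into $Q$-toroidality of $X'$. Each $G$-orbit $Z$ has closure $G$-stable, hence $\overline{Z\cap X'}$ is $Q$-stable. So the $G$-orbits $Z$ of $X$ correspond exactly to the $M$-orbits of $X'$ with $Q$-stable closure, with $\Cone_Z=\Cone_{Z\cap X'}$ computed through $M$-stable divisors containing them. Taking unions over orbits and using the definition of $V_Q$ (Definition 6.12 of [Pe17], recalled above) together with the fact that $V_G(X)$ is the union of the cones of a complete toroidal embedding gives $V_G(X)=V_Q(X')$. The equality $\Sigma_G(X)=\Sigma_Q(X')$ then follows, since both sets are defined as the minimal primitive inequalities cutting out the same cone in the same lattice.

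The main obstacle is the orbit and divisor bookkeeping: checking that the $M$-orbits in $X'$ whose closures are $Q$-stable are exactly the intersections $Z\cap X'$, and that their cones $\Cone$ computed with respect to $B_M$-stable divisors of $X'$ agree with $\Cone_Z$. This requires care with colors. I would first handle this in the case where $X$ is toroidal, which avoids colors containing orbits, and then verify that $B$-stable divisors of $X$ containing $Z$ restrict to $B_M$-stable divisors of $X'$ containing $Z\cap X'$. For this I would use the $Q^u$-invariance on the big cell.
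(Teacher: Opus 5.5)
The paper does not prove this statement. It quotes it from [Pe17, Theorem~13.1], so there is no in-text argument to compare yours against. Judged on its own, your route is correct in substance: write $X\cong G\times^Q X'$, pass to a complete $G$-toroidal embedding of the open $G$-orbit, and match $G$-orbits and $G$-stable divisors of $X$ with $Q$-orbits and $Q$-stable divisors of $X'$. It also fits exactly the definition of $V_Q$ through complete $Q$-toroidal embeddings. A few points need tightening.

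First, once you replace $X$ by a complete $G$-toroidal embedding of $G/K$, you must check that it still maps to $G/Q$. This follows from \cite[Theorem~4.1]{Kn91}: all colored cones of a toroidal embedding are colorless, so they map to $(\{0\},\varnothing)$, the only colored cone of $G/Q$. Alternatively, start from a complete $Q$-toroidal $Y'$ and take $G\times^Q Y'$.

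Second, since $Q\supseteq B_-$, the group $Q^u$ meets $B$ trivially, and $B=(B\cap M)\ltimes Q^u_-$, where $Q^u_-$ is the unipotent radical of the parabolic subgroup opposite to $Q$. Hence $B\cap Q=B\cap M$, which already has an open orbit in $X'$; nothing needs to be "extracted". On the open set $\varphi\inv(BQ/Q)\cong Q^u_-\times X'$ the invariance you use is under $Q^u_-$, not $Q^u$.

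Third, colors of $X$ pulled back from Schubert divisors of $G/Q$ need not have $\rho=0$. For example, the unique color of $\CC^2\smallsetminus\{0\}$ under $\SL(2)$ is the preimage of a point of $\PP^1$, and its $\rho$ is nonzero. What you actually need is only that they contain no $G$-orbit, which holds because every $G$-orbit maps onto $G/Q$.

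Finally, the bookkeeping you single out as the main obstacle is short.
\begin{itemize}
\item The maps $D\mapsto D\cap X'$ and $E\mapsto\overline{BE}$ are mutually inverse, $\rho$-preserving bijections. They relate $B$-stable prime divisors of $X$ meeting $\varphi\inv(BQ/Q)$ to $(B\cap M)$-stable prime divisors of $X'$.
\item Under these bijections, $G$-stable divisors match $Q$-stable ones. So all non-$Q$-stable divisors of $X'$ match colors of $X$ mapping dominantly to $G/Q$; this includes the $M$-stable but non-$Q$-stable ones, which is where care is needed.
\item If such an $E$ contains the $Q$-orbit $Z\cap X'$, then $\overline{BE}\supseteq Q^u_-(Z\cap X')$, which is dense in $Z$. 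Hence $G$-toroidality of $X$ gives $Q$-toroidality of $X'$.
\item It also gives $\Cone_Z=\Cone_W$, where $W$ is the dense $M$-orbit of $Z\cap X'$.
\item An $M$-orbit of $X'$ has $Q$-stable closure exactly when it is dense in its $Q$-orbit. So these $W$ are precisely the orbits entering the definition of $V_Q(X')$, and both equalities follow as you say.
\end{itemize}
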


\section{The classification of $P$-embeddings}

\begin{lemma}\label{lemma:uniqueLorbits}
Let $X$ be a $P$-embedding of $P/H$. Then the following assertions hold.
\begin{enumerate}
\item Each $P$-orbit $Z$ of $X$ contains a unique $G$-orbit $Z^\uparrow$ such that $Z^\uparrow$ is open in $Z$, and a unique $G$-orbit $Z^\downarrow$ such that $Z^\downarrow$ is closed in $Z$.
\item For all $P$-orbits $Z$ of $X$, all $G$-stable prime divisors of $Z$ contain $Z^\downarrow$.
\end{enumerate}
\end{lemma}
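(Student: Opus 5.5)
Fix a $P$-orbit $Z$ and a point $z\in Z$, and write $P=G\ltimes P^u$ (Levi decomposition). Then $Z=P\cdot z=G\,P^u\cdot z$. Since $Z$ is a locally closed $G$-stable subvariety of the spherical $G$-variety $X$, it contains only finitely many $G$-orbits. Moreover $Z$ is irreducible, being an orbit of the connected group $P$.

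\textbf{The open $G$-orbit $Z^\uparrow$.} Existence and uniqueness follow from these two facts. $Z$ is the finite union of its $G$-orbits, so by irreducibility exactly one of them is dense in $Z$. A dense orbit is open in its closure, hence open in $Z$. Any open $G$-orbit is dense in the irreducible $Z$, so two of them would intersect and therefore coincide.

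\textbf{The closed $G$-orbit $Z^\downarrow$.} Here I would use the unipotent radical. Consider the $G$-equivariant structure of $Z\cong P/P_z$. Let $\pi\colon P\to P/P^u\cong G$ be the projection, and consider the induced map $Z\to G/\pi(P_z)$, sending $pP_z\mapsto \pi(p)\pi(P_z)$. This map is $P$-equivariant, with $P^u$ acting trivially on the target.

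The natural candidate is $Z^\downarrow$ = the $G$-orbit of a point fixed by a suitable subgroup. Concretely, I would argue as follows. $P_z$ contains a Levi-type decomposition up to conjugation: $P_z^\circ$ has a Levi subgroup, conjugate in $P$ into $G$. So after replacing $z$ by $pz$, we may assume $P_z=L_z\ltimes P_z^u$ with $L_z\subseteq G$ (the Mostow decomposition, applied to the identity component and then extended).

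Then $G\cdot z\cong G/L_z'$ where $L_z' = G\cap P_z \supseteq L_z$. The fibre of $Z\to G/\pi(P_z)$ over the base point is $\pi(P_z)P^u/P_z$. This fibre is a homogeneous space for the semidirect product of $\pi(P_z)$ with $P^u$, and it contains the point $z$ whose $\pi(P_z)$-orbit is closed. Indeed, that orbit is the orbit of a reductive-ish group fixing the base point of an affine space $P^u/P^u_z$: homogeneous spaces of unipotent groups are affine spaces, and the Levi factor acts linearly after choosing the base point. Hence the $G$-orbit of $z$ is closed in $Z$.

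For uniqueness of the closed $G$-orbit, I would show that every $G$-orbit in $Z$ contains $G\cdot z$ in its closure. Via the fibration, this reduces to a statement in the fibre: the fibre is an affine space with a linear action of $\pi(P_z)$ (by the Levi factor) fixing the origin. Using a one-parameter subgroup of the centre-like part, or more robustly using that the fibre is a vector space on which the action is linear, every orbit closure of $\pi(P_z)\ltimes$ (translations) $\ldots$ The main obstacle is exactly this: ensuring every $G$-orbit closure meets the special orbit. I would try to find a cocharacter $\lambda$ of $T$ (or of the centre of $G$) contracting $P^u$ to $e$, so that $\lim_{t\to 0}\lambda(t)u\lambda(t)^{-1}z=z$ for $u\in P^u$. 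If no such cocharacter exists in general, I would instead use the Borel fixed point theorem on $\overline{G y}\cap Z$ together with finiteness of orbits. Alternatively, the simplest robust route is: closed $G$-orbits in $Z$ are exactly those of minimal dimension, and they are all $P$-translates of each other. Since $P^u$ is connected unipotent, $P^u$ permutes the finitely many closed $G$-orbits continuously and hence fixes each of them; so a closed $G$-orbit is $P$-stable, hence equals $Z$ only if it is the unique orbit. More precisely, a closed $G$-orbit $C$ satisfies $p C$ closed for all $p\in P$, and $P^u$ acts trivially on the finite set of closed $G$-orbits. Hence $C$ is $GP^u=P$-stable, so $C=Z$… that is contradictory unless I only conclude that $P\cdot C=C$ setwise. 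That fails, so the honest route is the contracting-cocharacter/fibre argument, which I flag as the crux.

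\textbf{Part (2).} Let $D$ be a $G$-stable prime divisor of $Z$. Then $D$ is a union of $G$-orbits, and $\overline{D}\cap Z=D$ is closed in $Z$. By the uniqueness in part (1), every $G$-orbit closure in $Z$ contains $Z^\downarrow$, so $D\supseteq Z^\downarrow$. Thus (2) follows directly once (1) is proved in the strong form "$Z^\downarrow$ lies in the closure of every $G$-orbit of $Z$", which is the form I would establish.
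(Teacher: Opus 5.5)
\textbf{There is a genuine gap: the uniqueness of the closed $G$-orbit of $Z$ is never proved.} Your argument for $Z^\uparrow$ is fine. So is your deduction of (2) from the statement that $Z^\downarrow$ lies in the closure of every $G$-orbit of $Z$. But that statement is the real content of the lemma. You flag it as the crux yourself, and none of the routes you list works as stated.
\begin{itemize}
\item A cocharacter of $T$ (or of the centre of $G$) contracting $P^u$ does not exist in general. For semisimple $G$ the weights of the $G$-module $\fp^u$ sum to zero; for instance, for $P=\SL(2)\ltimes\CC^2$ no cocharacter of $T$ contracts both weight lines.
\item The Borel fixed point theorem needs a complete variety, and $\overline{Gy}\cap Z$ is not complete.
\item $P^u$ does not permute the closed $G$-orbits. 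An element $u\in P^u$ sends a $G$-orbit to an orbit of the conjugate Levi subgroup $uGu^{-1}$, not to a $G$-orbit. This is the real reason your last attempt collapses.
\item Your fibration $Z\to G/\pi(P_z)$, i.e.\ $Z\cong G\times^{\pi(P_z)}F$ with $F=P^u/(P^u\cap P_z)$, is a correct reduction, but it leads nowhere. The base is not complete. The group $\pi(P_z)$ acting on $F$ is in general not reductive and does not fix $z$ (only $G\cap P_z$ does). So there is no linear action on $F$ and no GIT statement to invoke.
\end{itemize}
For the same reason, your justification that $G\cdot z$ is closed is off. That part is harmless, since existence of a closed $G$-orbit is immediate by taking an orbit of minimal dimension.

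\textbf{The missing idea is to fibre over a complete base, with an affine fibre on which a reductive group acts with an open orbit.} Put $K=P_z$ and let $Q\subseteq P$ be a parabolic subgroup minimal among those containing $K$.
\begin{itemize}
\item Since $Q\supseteq P^u$, the variety $P/Q$ is a single $G$-orbit. So every closed $G$-orbit $W$ of $Z$ maps onto $P/Q$ and meets the fibre $Z'=Q/K$.
\item Minimality forces the image of $K$ in $Q/Q^u$ to be reductive, i.e.\ $K^u\subseteq Q^u$. Hence $Z'$ is affine.
\item By Proposition~\ref{prop:fiberoverflag}, $Z'$ is spherical for a Levi subgroup $M\subseteq G$ of $Q$. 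Thus $\CC[Z']^M=\CC$, and $Z'$ has a unique closed $M$-orbit.
\item For every closed $G$-orbit $W$, the set $W\cap Z'$ is closed and $M$-stable in $Z'$, so it contains that unique closed $M$-orbit.
\end{itemize}
Hence any two closed $G$-orbits of $Z$ meet, and therefore coincide. This is the paper's argument. Sphericity enters essentially, through the open $M$-orbit on the affine fibre, and that is precisely the global input your sketch lacks.
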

\begin{proof}
We prove the first assertion. Let $Z\subseteq X$ be a $P$-orbit. It is a locally closed $G$-stable irreducible subvariety of $X$, whence $Z$ is $G$-spherical. Being a $P$-orbit, it can be identified with a homogeneous space $Z=P/K$, let $Q\subseteq P$ be a parabolic subgroup minimal containing $K$. The homogeneous variety $P/Q$ is complete and $Q$ contains the unipotent radical of $P$, hence $P/Q$ is a single $G$-orbit.

Let $W\subseteq Z$ be a $G$-orbit closed in $Z$, and consider the natural projection $\varphi\colon Z\to P/Q$. Since $P/Q$ is $G$-homogeneous, we have that $W$ is mapped surjectively to $P/Q$ via $\varphi$ and $W$ intersects the fiber $Z'=\varphi\inv(eQ)$.

The subgroup $K$ is {\em regularly embedded} in $Q$, i.e., the unipotent radical $K^u$ is contained in $Q^u$, see~\cite[Definition~3.6]{Ti11}. This holds by the minimality in the definition of $Q$, because this minimality implies that the image of $K$ in $Q/Q^u$ is reductive. We conclude that $Z'\cong Q/K$ is affine, by~\cite[Theorem~3.7]{Ti11}. Moreover $Z'$ is spherical under the action of a Levi subgroup $M$ of $Q$, as we have recalled in Proposition~\ref{prop:fiberoverflag}. We may suppose $M$ is contained in $G$; being affine and spherical, the variety $Z'$ has a unique open $M$-orbit therefore a unique closed $M$-orbit by GIT.

The intersection $W\cap Z'$ is a closed and $M$-stable subset of $Z'$, hence it contains the unique closed $M$-orbit of $Z'$. This shows that all closed $G$-orbits of $Z$ intersect, therefore there is only one such orbit $Z^\downarrow$. Finally, the variety $Z$ is $G$-spherical therefore it has a unique open $G$-orbit $Z^\uparrow$.

The second assertion is a direct consequence of the first, since $Z$ has a unique closed $G$-orbit.
\end{proof}

We extend the notations introduced in the above lemma.
\begin{definition}
Let $X$ be a $P$-embedding of $P/H$ and let $Z\subseteq X$ be a non-empty subset contained in a single $P$-orbit $PZ$. We denote by $Z^\uparrow$ the unique $G$-orbit that is dense in $PZ$, and by $Z^\downarrow$ the unique $G$-orbit that is closed in $PZ$.
\end{definition}

\begin{remark}
If $Z,W$ are $P$-orbits of $X$ such that $W\subseteq \overline Z$ holds, then of course we have $W^\uparrow\subseteq \overline{Z^\uparrow}$. On the other hand, in general there is no inclusion relation between the closures of $W^\downarrow$ and $Z^\downarrow$, as the next example shows.
\end{remark}

\begin{example}\label{ex:P2}
Let $G=B=T$ be the usual maximal torus of $\GL(3)$ consisting of the diagonal matrices, and let $P$ be the subgroup of upper-triangular matrices in $\GL(3)$. Let $X=\PP^2$ with the usual linear action of $\GL(3)$. Then $X$ is spherical and the action extends to $P$. The $G$-stable prime divisors are the three coordinate hyperplanes $D_i$, where the notation is such that $D_i\not\ni [e_i]$ and $(e_1,e_2,e_3)$ is the canonical basis of $\CC^3$. The prime divisor $D_3$ is the only $P$-stable one among them. There are three $P$-orbits:
\begin{enumerate}
\item the single point $Z_0 = \{[e_1]\}$, it is a single $G$-orbit thus $Z_0=Z_0^\downarrow$;
\item the affine line $Z_1 = D_3\smallsetminus\{[e_1]\}$, it is the union of two $G$-orbits and we have $Z_1^\downarrow=\{[e_2]\}$;
\item the open $P$-orbit $P/H=Z_2 = \PP^2\smallsetminus D_3$, which has three $G$-orbits and satisfies $Z_2^\downarrow=\{[e_3]\}$.
\end{enumerate}
Notice that $Z_0$ is in the closure of $Z_1$ which is in the closure of $Z_2$ but there is no inclusion relation between the closures in $X$ of the $G$-orbits $Z_i^\downarrow$ which are all single points.

The vector space $N(X)$ has dimension $2$ and there is no color, the group $G$ is a torus so $X$ is $G$-horospherical. The colored fan of $X$ has three rays, generated by the valuations $v_i$ of the divisors $D_i$, and three $2$-dimensional cones, each spanned by two of the rays. 
\begin{figure}[htbp]
\centering
\[
\xy
(0,0);(-20,0)
**\dir{-};
?>*@{>};
(0,0);(0,-20)
**\dir{-};
?>*@{>};
(0,0);(20,20)
**\dir{-};
?>*@{>};
(3,-20)*{\scriptstyle{v_1}};
(-20,3)*{\scriptstyle{v_2}};
(22,17)*{\scriptstyle{v_3}};
(-16,-9)*{\scriptstyle{\text{cone of the $G$-orbit}}};
(-16,-13)*{\scriptstyle{\text{closed in } P/H}};
(0,0)*\cir<10pt>{ul^d};
(0,0)*\cir<8pt>{l_u};
(0,0)*\cir<12pt>{dr_l};
\endxy
\]
\caption{Colored fan of $\PP^2$ \label{fig:P2}}
\end{figure}
The colored cone of the $G$-orbit closed in $P/H$ is generated by the valuations of the $G$-stable prime divisors that intersect $P/H$, i.e., it is the cone generated by $v_1$ and $v_2$.

Let us compute the $P$-spherical roots. The embedding $X$ of the open $P$-orbit $Z_2=P/H$ is not $P$-toroidal, because the $B$-stable prime divisor $D_2$ is the closure of a $B$-stable prime divisor of $P/H$, and $D_2$ contains the $P$-orbit $Z_0$. To obtain a $P$-toroidal completion of $P/H$ it is enough to blow up the point $Z_0$, so set $\widetilde X = \Bl_{[e_1]}(X)$.

The fan acquires an additional ray is generated by $v_4 = v_1+v_3$, and the cone generated by $v_2$ and $v_3$ is replaced by the two cones generated resp.\ by $v_3,v_4$ and by $v_2,v_4$. It is easy to describe the four $G$-orbits with $P$-stable closure, and obtain that $V_P(P/H)$ is generated by $v_3,v_4$.

\begin{figure}[htbp]
\centering
\[
\xy
(0,0);(-20,0)
**\dir{-};
?>*@{>};
(0,0);(0,-20)
**\dir{-};
?>*@{>};
(0,0);(20,20)
**\dir{-};
?>*@{>};
(0,0);(20,0)
**\dir{-};
?>*@{>};
(3,-20)*{\scriptstyle{v_1}};
(-20,3)*{\scriptstyle{v_2}};
(22,17)*{\scriptstyle{v_3}};
(22,-3)*{\scriptstyle{v_4}};
(-16,-9)*{\scriptstyle{\text{cone of the $G$-orbit}}};
(-16,-13)*{\scriptstyle{\text{closed in } P/H}};
(0,0)*\cir<10pt>{ul^d};
(0,0)*\cir<8pt>{l_u};
(0,0)*\cir<12pt>{d_l};
(0,0)*\cir<14pt>{dr_d};
(16,6)*{\scriptstyle{V_P(P/H)}};
\endxy
\]
\caption{Colored fan of $\Bl_{[e_1]}\left(\PP^2\right)$ \label{fig:BlP2}}
\end{figure}

Notice the properties:
\begin{enumerate}
\item all colored cones $(\Cone, \Colemb)$ of the fans of $X$ and of $\widetilde X$ are generated by $\rho(\Colemb)(=\varnothing)$, some elements of $\rho\left(\Div(P/H)^G\right)=\{v_1,v_2\}$ and some elements in $V_P(P/H)$;
\item the colored cones of $G$-orbits with $P$-stable closures are exactly those whose relative interior intersects $V_P(P/H)$.
\end{enumerate}
\end{example}

We go back to the classification of $P$-equivariant embeddings of $P/H$. We will prove that the $P$-invariant discrete valuations of $\CC(P/H)$ correspond to the points of $V_P(P/H)$. A first approximation of this fact is easier and we prove it in the next proposition.

\begin{proposition}\label{prop:VPisPinvariant}
Let $X$ be a $P$-embedding of $P/H$ and let $Z\subseteq X$ be a $G$-orbit. If the relative interior of $\Cone_Z$ intersects $V_P(P/H)$ then $\overline Z$ is $P$-stable.
\end{proposition}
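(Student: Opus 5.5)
Given $v$ in the relative interior of $\Cone_Z$ with $v\in V_P(P/H)$, the plan is to realize $v$ as a $P$-invariant valuation and show that its center on $X$ is $\overline Z$. By definition, $V_P(P/H)=\bigcup_W \Cone_W$, where $W$ runs over the $G$-orbits with $P$-stable closure in a complete $P$-toroidal embedding $\widetilde X$. We may take $v$ rational, since the relative interior of $\Cone_Z$ is open in the span of $\Cone_Z$, and each $\Cone_W$ is a rational polyhedral cone, so rational points are dense in $\Cone_Z^\circ\cap V_P$. Possibly after subdividing, choose $W$ with $v\in\Cone_W$.

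\textbf{Step 1: $v$ is $P$-invariant.} By refining $\widetilde X$ (a $P$-equivariant blow-up, or a subdivision of the colored fan, which keeps it $P$-toroidal and complete; this is presumably as in \cite{Pe17}), arrange that the ray $\QQ_{\ge0}v$ is itself a cone $\Cone_W$ of a $G$-orbit $W$ with $P$-stable closure. Since $\widetilde X$ is $P$-toroidal, this ray is generated by $\rho(D)$ for a $G$-stable prime divisor $D=\overline W$. This $D$ is $P$-stable, so a multiple of $v$ is the valuation of a $P$-stable divisor, and that valuation is $P$-invariant. Alternatively, quote \cite[Theorem~6.10]{Pe17} if it already identifies $V_P$ with images of $P$-invariant valuations. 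This refinement step is the main obstacle: making sure the refined embedding remains $P$-equivariant, with $\overline W$ still $P$-stable. I would first try to produce it by taking the normalization of the closure of the graph of $P/H\to \widetilde X\times X_v$, where $X_v$ is the $G$-embedding with the single extra ray. More simply, I would use that the $G$-orbit closures in a $P$-toroidal embedding whose cones meet $V_P$ are $P$-stable, as in the definition.

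\textbf{Step 2: the center of $v$ on $X$ is $\overline Z$.} By Knop's theory (\cite{Kn91}), a $G$-invariant valuation $v$ with $\rho(v)\in\Cone_Z^\circ$ has a center on $X$, and this center is exactly $\overline Z$. Indeed, $v$ is nonnegative on $\O_{X,Z}$ and positive on its maximal ideal: the $B$-chart $X_{Z,B}$ is described by $\Cone_Z$ and $\Colemb_Z$, and $\rho(v)$ lies in the interior of $\Cone_Z$.

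\textbf{Step 3: conclusion.} Since $v$ is $P$-invariant, for $p\in P$ the valuation $p\cdot v=v$ has center $p\overline Z$. Uniqueness of the center on the separated variety $X$ then gives $p\overline Z=\overline Z$. Hence $\overline Z$ is $P$-stable.
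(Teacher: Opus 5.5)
\textbf{Verdict: genuine gap.} Your Steps~2 and~3 are sound. Once you have a $P$-invariant discrete valuation $\tilde v$ with $\rho(\tilde v)\in\Cone_Z^\circ$, Knop's theory shows that its center on $X$ is $\overline Z$, and $P$-invariance together with uniqueness of centers forces $p\overline Z=\overline Z$. The gap is Step~1, and it carries the whole difficulty.

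\textbf{What is missing in Step~1.} Before this proposition, all you know about $V_P(P/H)$ is that it is the union of the cones $\Cone_W$ of $G$-orbits with $P$-stable closure in a complete $P$-toroidal embedding. Equivalently, a point of $V_P(P/H)$ is a non-negative combination $\rho(v_1)+\dots+\rho(v_r)$ of images of $P$-invariant valuations. The cited \cite[Theorem~6.10]{Pe17} only adds convexity, polyhedrality and $V_P\subseteq V_G$. Nothing yet says that such a combination is itself $\rho$ of a $P$-invariant valuation. That statement is Corollary~\ref{cor:Pvalcone}. The paper derives it from Theorem~\ref{thm:existencePsimple}, whose proof invokes the present proposition, so you cannot quote it here.

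\textbf{Why your refinement suggestions do not close the gap.}
\begin{itemize}
\item Subdividing the colored fan of $\widetilde X$ produces a priori only a $G$-embedding. Whether $P$ acts on it is exactly the issue; the paper leaves open in general which $G$-fans come from $P$-embeddings.
\item The graph closure in $\widetilde X\times X_v$ is not $P$-stable, because $X_v$ is only a $G$-variety.
\item The remark that orbit closures in a $P$-toroidal embedding whose cones meet $V_P$ are $P$-stable presupposes that the refined variety is already a $P$-embedding. It is also a special case of the statement you are proving.
\end{itemize}
A genuinely $P$-equivariant refinement might work, for example a normalized weighted blow-up of a $P$-stable ideal sheaf built from the $P$-stable divisors through $\overline W$, followed by a local-structure computation of the exceptional valuation. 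But you have not carried this out, and it is the whole content of the claim.

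\textbf{How the paper avoids this.} It never realizes $v$ as a valuation. Instead of refining so that $v$ spans a ray, it refines so that a complete $P$-toroidal embedding dominates $X$.
\begin{enumerate}
\item Take a $P$-equivariant completion $X'$ of $X$ (Sumihiro).
\item Take a complete $P$-toroidal $X''$ with a proper $P$-equivariant map $\varphi\colon X''\to X'$ extending the identity; this is \cite[Lemma~5.2]{Pe17}.
\item Since $\Fan(X'')$ covers $V_G(P/H)\supseteq V_P(P/H)$ and maps to $\Fan(X')$, \cite[Theorem~4.1]{Kn91} gives a $G$-orbit $W\subseteq X''$ with $\Cone_W\subseteq\Cone_Z$, with $\Cone_W^\circ$ meeting $\Cone_Z^\circ$ in a point of $V_P(P/H)$, and with $\varphi(W)=Z$.
\item Relative interiors of cones of $\Fan(X'')$ cannot overlap on $V_G(P/H)$, so $\Cone_W\subseteq V_P(P/H)$.
\item Hence $\overline W$ is $P$-stable by \cite[Proposition~6.3]{Pe17}.
\item Finally, $\overline Z=\varphi(\overline W)$ is $P$-stable by properness and equivariance.
\end{enumerate}
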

\begin{proof}
Let $X'$ be a $P$-equivariant completion of $X$ and let $X''$ be a complete $P$-toroidal embedding of $P/H$ such that the identity of $P/H$ extends to a proper $P$-equivariant regular map $\varphi\colon X''\to X'$. Such $X'$ exists thanks to a theorem of Sumihiro's~\cite{Su74}, and $X''$ exists thanks to~\cite[Lemma~5.2]{Pe17}. The union of all cones of $\Fan(X'')$ contains $V_G(P/H)$ because $X''$ is complete~\cite[Theorem~4.2]{Kn91}, so this union contains $V_P(P/H)$ too. By~\cite[Theorem~4.1]{Kn91} (see also Section~\ref{s:morphisms} below), this implies that there is a $G$-orbit $W\subseteq X''$ such that $\Cone_W$ is contained in $\Cone_Z$, and such that the relative interior of $\Cone_W$ intersects the relative interior of $\Cone_Z$ in some point of $V_P(P/H)$.

Again~\cite[Theorem~4.1]{Kn91} implies that $\varphi(W)$ is equal to $Z$. On the other hand, by the definition of $V_P(P/H)$ we deduce that $\Cone_W$ is contained in $V_P(P/H)$, because the relative interiors of the cones in the fan of $X''$ are allowed to overlap, but not on points of $V_G(P/H)$, as we have recalled in Section~\ref{s:notations}.

Thanks to~\cite[Proposition~6.3]{Pe17} the closure $\overline W$ is $P$-stable. This shows that the closure of $Z$ is $P$-stable, equivalently in $X'$ or in $X$.
\end{proof}

\begin{definition}\label{def:Pequivariantcones}
\begin{enumerate}
\item A colored cone or a colored subspace $(\Cone, \Colemb)$ in $N(P/H)$ is called {\em $P$-equivariant} if $\Cone$ is generated as a convex cone by $\rho(\Colemb)$, a subset of $\rho\left(\Div(P/H)^G\right)$, and finitely many elements of $V_P(P/H)$.
\item A colored cone $(\Cone, \Colemb)$ in $N(P/H)$ is called {\em $P$-invariant} if it is $P$-equivariant and the relative interior of $\Cone$ intersects $V_P(P/H)$.
\item A {\em colored $P$-fan} in $N(P/H)$ is a non-empty finite set $\Fan$ of $P$-invariant colored cones such that $\Fan$ contains every $P$-invariant face of every element of $\Fan$, and for every $v\in V_G(X)$ there is at most one $(\Cone,\Colemb)\in\Fan$ such that $v\in \Cone^\circ$. A colored $P$-fan is {\em strictly convex} if all its elements are strictly convex.
\item For any colored cone $(\Cone, \Colemb)$ in $N(P/H)$ we denote by $(\Cone,\Colemb)^\uparrow$ the unique maximal $P$-invariant colored face of $(\Cone,\Colemb)$.
\item We denote by $\Fan^\uparrow$ the subset of $P$-invariant colored cones of any colored fan $\Fan$ in $N(P/H)$.
\end{enumerate}
\end{definition}

The above definition is motivated by Proposition~\ref{prop:PequivariantLorbit} below.

\begin{remark}
We point out that a colored $P$-fan is not necessarily a colored fan according to the standard definition. The reason is that a $P$-invariant colored cone can have colored faces that are not $P$-invariant, i.e., such that their relative interiors intersect $V_G(P/H)$ but not $V_P(P/H)$. Thanks to Proposition~\ref{prop:PequivariantLorbit}, this corresponds to the fact that $G$-orbits with $P$-stable closure can be contained in non-$P$-stable $G$-orbit closures, see Example~\ref{ex:P2}.
\end{remark}

We begin the classification by proving the uniqueness part for $P$-simple $P$-embeddings.

\begin{proposition}\label{prop:Psimpleuniqueness}
Let $X$ be a $P$-simple $P$-embedding of $P/H$, with closed $P$-orbit $Z$. The colored cone $(\Cone_{Z^\uparrow},\Colemb_{Z^\uparrow})$ determines $X$ uniquely up to isomorphisms of $P$-embeddings.
\end{proposition}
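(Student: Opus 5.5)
The plan is to reduce to Knop's uniqueness theorem for $G$-simple spherical embeddings, and then use $P$-equivariance to propagate an isomorphism from a $G$-stable open subset to all of $X$.

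First, let $X_1\subseteq X$ be the union of all $G$-orbits $W$ of $X$ such that $\overline W\supseteq Z^\uparrow$. Since $X$ has finitely many $G$-orbits, the complement of $X_1$ is the finite union of the closures $\overline W$ of the $G$-orbits $W$ with $\overline W\not\supseteq Z^\uparrow$. Hence $X_1$ is open and $G$-stable. It is a $G$-simple spherical $G$-variety whose unique closed $G$-orbit is $Z^\uparrow$. It contains the open $G$-orbit $Y_0$ of $P/H$, because $Z^\uparrow\subseteq Z\subseteq \overline{P/H}=\overline{Y_0}$. So $X_1$ is a $G$-simple $G$-embedding of $Y_0$. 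Its colored cone is the one of $Z^\uparrow$ in $X$, namely $(\Cone_{Z^\uparrow},\Colemb_{Z^\uparrow})$, since $B$-stable prime divisors of $X$ containing $Z^\uparrow$ all meet $X_1$. By the classical theory (\cite{Kn91}), this colored cone determines $X_1$ up to $G$-isomorphism of $G$-embeddings of $Y_0$.

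Second, I claim $PX_1=X$. The set $X\smallsetminus PX_1$ is closed and $P$-stable. If it were non-empty it would contain a closed $P$-orbit, which must be $Z$ by $P$-simplicity. But $Z^\uparrow\subseteq X_1$, a contradiction.

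Third, let $X'$ be another $P$-simple $P$-embedding of $P/H$ with closed $P$-orbit $Z'$ and $(\Cone_{Z'^\uparrow},\Colemb_{Z'^\uparrow})=(\Cone_{Z^\uparrow},\Colemb_{Z^\uparrow})$. Define $X'_1$ analogously. Consider the birational map $\psi\colon X\dashrightarrow X'$ extending the identity of $P/H$; it is $P$-equivariant.
\begin{enumerate}
\item By the first step there is a $G$-isomorphism $X_1\to X'_1$ extending the identity of $Y_0$. Since $Y_0$ is dense, this isomorphism coincides with $\psi$ on $X_1$, so $\psi$ is regular on $X_1$.
\item The domain of definition of a $P$-equivariant rational map is $P$-stable. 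Hence $\psi$ is regular on $PX_1=X$.
\item The same argument applies to $\psi\inv$, so $\psi$ is an isomorphism of $P$-embeddings.
\end{enumerate}

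The main point needing care is the first step. One must check that $X_1$ is indeed a $G$-embedding of $Y_0$, that is, normal (it is open in the normal variety $X$) and containing $Y_0$ as open $G$-orbit. One must also check that its colored cone computed inside $X_1$ agrees with $(\Cone_{Z^\uparrow},\Colemb_{Z^\uparrow})$ computed inside $X$, including the identification of colors of $X_1$ with those of $P/H$ through $\rho$. After that, Knop's uniqueness result applies directly. The remaining steps are formal.
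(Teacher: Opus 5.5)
Your proof is correct and follows essentially the paper's strategy. The paper reruns Knop's coordinate-ring argument from his Theorem~2.3 on the affine $B$-chart $X_0=X\smallsetminus\bigcup_{D\not\supseteq Z}D$, which is the $B$-chart of your $G$-simple piece $X_1$, and then extends via $X=PX_0$; you instead cite Knop's uniqueness for $X_1$ as a black box, and your observations that the domain of definition of the $P$-equivariant birational map is $P$-stable and that $PX_1=X$ make explicit the extension step the paper only sketches (``using the action of $P$'').
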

\begin{proof}
The proof is essentially the same of~\cite[Theorem~2.3]{Kn91}. Let $X'$ be a $P$-simple $P$-embedding of $P/H$, with closed $P$-orbit $Z'$, and assume $(\Cone_{Z^\uparrow},\Colemb_{Z^\uparrow})=(\Cone_{(Z')^\uparrow},\Colemb_{(Z')^\uparrow})$.

Define
\[
X_0 = X\smallsetminus\bigcup_{\substack{D\in\Div(X)^B,\\D\not\supseteq Z}} D
\]
and $X'_0$ similarly.

Recall that the colored cone $(\Cone_{Z^\uparrow},\Colemb_{Z^\uparrow})$ determines, up to strictly positive rational multiples, the valuations of the $B$-stable prime divisors of $X$ containing $Z^\uparrow$ (equivalently, containing $Z$). Indeed, the valuations of the $G$-stable prime divisors containing $Z$ generate precisely those extremal rays of $\Cone_{Z^\uparrow}$ not containing elements of $\rho(\Colemb_{Z^\uparrow})$.

This implies that $X_0$ and $X_0'$ have the same $B$-semiinvariant rational functions. The open subset
\[
X_1 = (P/H) \smallsetminus\bigcup_{\substack{D\in\Div(X)^B,\\D\not\supseteq Z}} D
\]
of $P/H$ is $B$-stable, and it is affine thanks to~\cite[Theorem~2.1]{Kn91}. Then we have
\[
\CC[X_0] = \{f\in\CC[X_1]\;|\; v(f)\geq 0\;\forall v\in \Cone_Z\}=\CC\left[X_0'\right].
\]
We conclude that the identity $P/H\to P/H$ extends to a regular map $X_0\to X_0'$. Using the action of $P$, the same map extends to an isomorphism of $P$-embeddings
\[
X=PX_0\to PX_0'=X'.
\]
\end{proof}

\begin{theorem}\label{thm:existencePsimple}
Let $(\Cone,\Colemb)$ be a strictly convex $P$-invariant colored cone in $N(P/H)$. Then there exists a unique (up to isomorphisms) $P$-simple $P$-equivariant embedding $X$ of $P/H$, with closed $P$-orbit $Z$, such that $(\Cone,\Colemb)=(\Cone_{Z^\uparrow},\Colemb_{Z^\uparrow})$.
\end{theorem}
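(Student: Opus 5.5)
Uniqueness is Proposition~\ref{prop:Psimpleuniqueness}, so only existence needs proof. The plan is to first produce a $G$-simple $G$-embedding of $P/H$ from Knop's theory, and then make $P$ act on it.

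\textbf{Step 1: the $G$-embedding.} Since $(\Cone,\Colemb)$ is $P$-invariant, its relative interior meets $V_P(P/H)\subseteq V_G(P/H)$. Hence $(\Cone,\Colemb)$ is a strictly convex colored cone in the usual sense. By \cite[Theorem~3.1]{Kn91} there is then a $G$-simple $G$-embedding $Y$ of $P/H$ with closed $G$-orbit $Y_0$ and $(\Cone_{Y_0},\Colemb_{Y_0})=(\Cone,\Colemb)$.

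\textbf{Step 2: extending the action to $P$.} This is the key point. I would use a complete $P$-toroidal embedding $X''$ of $P/H$ (\cite[Lemma~5.2]{Pe17}), refined so that some cone of its fan is contained in $\Cone$ and its relative interior meets $\Cone^\circ$ at a point $v\in V_P(P/H)$. Such a cone $\Cone_W$ lies in $V_P(P/H)$, so $\overline W$ is $P$-stable by \cite[Proposition~6.3]{Pe17}. Rather than glue, I would build $X$ directly as an affine-chart construction in the style of Knop. Set
\[
\CC[X_0]=\{f\in\CC[X_1] \;|\; v(f)\geq 0\ \forall v\in\Cone\},
\]
where $X_1$ is $P/H$ minus the colors not in $\Colemb$ (affine by \cite[Theorem~2.1]{Kn91}). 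Then $X_0$ is the $B$-chart of $Y$, and $Y=GX_0$.

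One then defines $X$ by gluing the translates $pX_0$, $p\in P$. Equivalently, one takes the $P$-span of $\CC[X_0]$ inside $\CC(P/H)$ and checks that the resulting variety is separated. I expect separatedness, together with the finiteness of the gluing, to be the main obstacle. To handle it, I would show that the $G$-variety $GX_0$ is already $P$-stable, i.e.\ that $P$ acts on $Y$. For this I would use that $P=GP^u$ and show that $P^u$ preserves the set of $\O_Y$-regular functions, using valuation criteria.

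\textbf{Step 3: $P^u$ stabilizes $Y$.} A rational function $f$ is regular on $Y$ iff $v_D(f)\ge0$ for every $G$-stable prime divisor $D$ of $Y$ and every color closure through $Y_0$. This holds because $Y$ is normal and every $B$-stable divisor meeting $X_0$ passes through $Y_0$ or lies in $P/H$.

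\begin{itemize}
\item The $G$-stable divisors of $Y$ outside $P/H$ correspond to rays of $\Cone$ not in $\rho(\Colemb)$. By $P$-equivariance of $(\Cone,\Colemb)$, each such ray lies either in $\rho(\Div(P/H)^G)$ or in $V_P(P/H)$. The relevant valuations of the second kind are $P$-invariant, since $V_P$ consists of $P$-invariant valuations; this is the content of \cite[Section~6]{Pe17}, via Proposition~\ref{prop:VPisPinvariant} applied to divisors in $X''$.
\item Divisors of the first kind are closures of $G$-stable divisors of $P/H$. A priori these might not be $P$-stable, which is exactly why the paper allows them. I would handle them by noting that the condition $v_D(f)\ge 0$ there is just regularity on $P/H$ near $D$, and $P/H$ is $P$-stable.
\item Colors in $\Colemb$ are treated the same way: they impose regularity along divisors of $P/H$.
\end{itemize}

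So the $\O(Y)$-conditions are $P$-invariant, $P$ acts on $Y$, and $X:=Y$ is a $P$-embedding.

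\textbf{Step 4: identifying $Z$.} The $P$-orbit $Z=PY_0$ is the unique closed $P$-orbit, since every $P$-orbit closure contains a closed $G$-orbit and $Y_0$ is the only one. It remains to see that $Z^\uparrow=Y_0$. As $\Cone^\circ$ meets $V_P$, Proposition~\ref{prop:VPisPinvariant} shows that $\overline{Y_0}=Y_0$ is $P$-stable. Hence $Z=Y_0$ is a single $G$-orbit and $Z^\uparrow=Y_0$ has colored cone $(\Cone,\Colemb)$.
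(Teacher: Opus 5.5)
Your proposal has a genuine gap at its foundation. The $G$-simple embedding $Y$ given by \cite[Theorem~3.1]{Kn91} is an embedding of the open $G$-orbit of $P/H$, not of $P/H$. It contains $P/H$ only if $(\Cone_{(P/H)^\downarrow},\Colemb_{(P/H)^\downarrow})$ is a colored face of $(\Cone,\Colemb)$, and $P$-invariance does not give this. More seriously, the $P$-simple embedding you want is in general not $G$-simple, and its closed $P$-orbit is in general not a single $G$-orbit. So it cannot be obtained by letting $P$ act on $Y$.

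Example~\ref{ex:P2} shows this. Take $(\Cone,\Colemb)=(\QQ_{\ge 0}v_3,\varnothing)$. Your $Y$ is the toric surface $\PP^2\smallsetminus(D_1\cup D_2)$, which misses $D_1\cap P/H$ and $D_2\cap P/H$. The correct $X$ is $\PP^2\smallsetminus\{[e_1]\}$. Its closed $P$-orbit $Z_1$ consists of two $G$-orbits, and $X$ has two closed $G$-orbits $[e_2]$ and $[e_3]$. The cone of $[e_2]$ is $\mathrm{cone}(v_1,v_3)$, which is a face neither of $\Cone$ nor of $\Cone_{(P/H)^\downarrow}=\mathrm{cone}(v_1,v_2)$. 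For $(\mathrm{cone}(v_3,v_4),\varnothing)$ the $P$-simple embedding is all of $\Bl_{[e_1]}\PP^2$, which has four closed $G$-orbits. Any strategy that first fixes a $G$-fan and then lets $P$ act would need exactly the combinatorial description of $\Fan(X)$ in terms of $\Fan(X)^\uparrow$ that the paper leaves open.

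Accordingly, your Step~4 is circular, since Proposition~\ref{prop:VPisPinvariant} applies only once you already have a $P$-embedding. Its conclusion ``$Z=Y_0$ is a single $G$-orbit'' is false in these examples. Step~3 also fails on its own terms:
\begin{enumerate}
\item Conditions along $B$-stable divisors control only $B$-eigenvectors, and $P^u$ does not preserve $B$-eigenvectors.
\item A $G$-stable but not $P$-stable divisor $D$ of $P/H$ is moved by $p\in P$ to $pD$, which is in general not among the divisors you test.
\item The claim that generators of $\Cone$ lying in $V_P(P/H)$ are images of $P$-invariant valuations is Corollary~\ref{cor:Pvalcone}, which the paper deduces from this very theorem.
\end{enumerate}

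The missing idea is to let $P$ act from the start and to work only with a $B$-stable, not $G$-stable, chart. The paper chooses $B$-eigenvectors $g_1,\dots,g_n$ whose weights generate $\Cone^\vee$. It takes a left-$B$, right-$H$ semi-invariant $f_0\in\CC[P]$ vanishing exactly over the divisors of $P/H$ to be discarded, such that each $f_i=f_0g_i$ is regular, and lets $V$ be the $P$-module generated by $f_0,\dots,f_n$. Then $X=P\cdot X_0$, where $X_0=\{f_0\neq 0\}$ inside the closure of the image of $P/H\to\PP(V^*)$. Separatedness and the $P$-action come for free.

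The real work is the equality $\CC[X_0]^{(B)}=\bigcup_{\chi\in\Cone^\vee}\CC(P/H)^{(B)}_\chi$. For a generator $c\in V_P(P/H)$ one writes a multiple of $c$ as $\rho(v_1)+\dots+\rho(v_r)$ with each $v_k$ $P$-invariant. This is available directly from the definition of $V_P$ via a complete $P$-toroidal embedding, without the corollary. One then expands $f$ as a polynomial in the $p_{i,j}f_i/f_0$ and applies \cite[Lemma~7.5]{Pe17} to products of the $P$-modules generated by the $f_j$. Finally, take the center $W$ of a $G$-invariant valuation $v_0$ with $\rho(v_0)\in\Cone^\circ\cap V_P(P/H)$. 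Every closed $P$-stable subvariety contains $W$, which gives uniqueness of the closed $P$-orbit. Proposition~\ref{prop:VPisPinvariant}, now applied to the genuine $P$-embedding $X$, gives $W=Z^\uparrow$.
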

\begin{proof}
The proof is similar to the one of~\cite[Theorem~3.1]{Kn91}. The uniqueness stems from Proposition~\ref{prop:Psimpleuniqueness}. Let $(\Cone,\Colemb)$ be a strictly convex $P$-invariant colored cone of $N(P/H)$, we construct the embedding $X$.

Let $\Cone^\vee$ be the dual cone of $\Cone$ in $\Xi(P/H)$ (i.e.\ the set of all elements that are non-negative on $\Cone$). By Gordan's Lemma, the set $\Cone^\vee$ is a finitely generated monoid. Let $\chi_1,\ldots,\chi_n$ be generators of $\Cone^\vee$ and for all $i\in\{1,\ldots,n\}$ choose a function $g_i\in\CC(P/H)^{(B)}$ with $B$-eigenvalue $\chi_i$. Let $D_0$ be the union of all elements of $\Div(P/H)^B$ not contained in $\Colemb$ and not contained in $\Div(P/H)^G\cap\rho^{-1}(\Cone)$.

For all $i$ the poles of $g_i$ are contained in $D_0$. The pull-back of $g_i$ in $\CC(P)$, also denoted by $g_i$, is $H$-invariant for the right translation action of $P$ on itself, and $B$-semiinvariant for the left translation action. Choose a non-zero function $f_0\in\CC[P]$ that is $B$-semiinvariant for left translation and $H$-semiinvariant for right translation, and that vanishes exactly on the inverse image of $D_0$ in $P$, with enough multiplicity so that $f_i=f_0g_i$ is a regular function on $P$. For its existence, see e.g.\ the proof of~\cite[Lemma~5.2]{Pe17}.

The functions $f_0,f_1,\ldots,f_n$ are $H$-semiinvariants for the right translation action, all with the same $H$-eigenvalue. Therefore the functions $f_0,f_1,\ldots,f_n$ generate in $\CC[P]$ a $P$-submodule $V$ (for the left translation action) which is made of analogous $H$-semiinvariants. Hence it defines naturally a $P$-equivariant map $P\to \PP(V^*)$ which descends to a $P$-equivariant map $\varphi\colon P/H\to\PP(V^*)$.

Let $\overline X$ be the closure of the image of $\varphi$, set
\[
X_0 = \left\{ x\in \overline X\;\middle|\; f_0(x)\neq 0\right\},\qquad X = P\cdot X_0
\]
and
\[
\M = \bigcup_{\chi\in \Cone^\vee} \CC(P/H)_\chi^{(B)}
\]
where $\CC(P/H)_\chi^{(B)}$ denotes the set of $B$-eigenvectors with $B$-eigenvalue $\chi$.

We claim the equality $\M = \CC[X_0]^{(B)}$ holds. The inclusion $\M \subseteq \CC[X_0]^{(B)}$ stems from the fact that the functions $g_1,\ldots,g_n$ correspond to regular functions on the principal open subset $\PP(V^*)_{f_0}$, so they restrict to regular functions on $X_0$. 

To show the other inclusion, we show that $\langle \chi,c \rangle \geq 0$ for any $c$ belonging to a given set of generators of $\Cone$ as in Definition~\ref{def:Pequivariantcones} and any $B$-character $\chi\neq0$ such that $\CC[X_0]^{(B)}_\chi$ is non-empty. Denote by $f$ an element of $\CC[X_0]^{(B)}_\chi$.

If $c=\rho(D)$ where $D\in \Colemb$ or $D\in\Div(P/H)^{G}\cap \rho\inv(\Cone)$ then $\langle \chi,c \rangle$, which is equal to $\Ord_D(f)$, is non-negative because the poles of $f$ as a rational function on $P/H$ are contained in $D_0$.

Otherwise $c$ is an element of $V_P(P/H)$. Recall that $V_P(P/H)$ is the convex cone generated by the $P$-invariant valuations of the $P$-stable prime divisors of a $P$-toroidal complete embedding of $P/H$. Therefore $c$ can be written as a linear combination of such valuations with non-negative rational coefficients. Replacing $c$ with a positive multiple we may write
\[
c = \rho(v_1)+\ldots+\rho(v_r)
\]
where $v_k$ is $P$-invariant for all $k$. Thus $\<\chi,c\> = v_1(f)+\ldots+v_r(f)$. Lift each $v_k$ to a $P$-invariant valuation of $\CC(P)$ thanks to~\cite[Lemma~7.4]{Pe17}.

For all $i\in\{0,\ldots,n\}$ choose $p_{i,1},\ldots,p_{i,m_i}\in P$ in such a way that the elements
\[
p_{0,1}f_0,p_{0,2}f_0,\ldots,p_{n,m_n-1}f_n,p_{n,m_n}f_n
\]
form a basis of $V$. On the other hand $X_0$ is again a closed subvariety of $\PP(V^*)_{f_0}$, so $f$ is the restriction to $X_0$ of a regular function on $\PP(V^*)_{f_0}$, and any such function is a polynomial in $\frac{p_{0,1}f_0}{f_0},\ldots,\frac{p_{n,m_n}f_n}{f_0}$. In other words $f$ is a linear combination of products of the form
\begin{equation}\label{eq:fmonomial}
\left(\frac{p_{0,1}f_0}{f_0}\right)^{a_{0,1}}\cdots\left(\frac{p_{n,m_n}f_n}{f_0}\right)^{a_{n,m_n}}
\end{equation}
for non-negative integers $a_{0,1},\ldots,a_{n,m_n}$.

We want to prove the inequality
\[
v_1(f)+\ldots+v_r(f)\geq 0.
\]
Since each $v_k$ is a valuation, it is enough to prove this inequality assuming that $f$ is given by a single product as in (\ref{eq:fmonomial}). Moreover, for all $k$ we have $v_k(p_{0,j}f_0/f_0)=0$ by $P$-invariance of $v_k$, so we can also assume
\[
f=\left(\frac{p_{1,1}f_1}{f_0}\right)^{a_{1,1}}\cdots\left(\frac{p_{n,m_n}f_n}{f_0}\right)^{a_{n,m_n}}.
\]
Denote $s=a_{1,1}+\ldots+a_{n,m_n}$. Up to reindexing $\chi_1,\ldots,\chi_n$ and introducing repetitions if necessary, we may assume $s\leq n$ and that $f f_0^s$ is in $V_1\cdots V_s$ where $V_j$ is the $P$-submodule of $\CC[P]$ generated by $f_j$ for all $j\in\{1,\ldots,s\}$. Lemma~7.5 of~\cite{Pe17} applied to $v_k$ and $h = ff_0^s$ yields
\[
\<\chi,c\> =  \sum_{k=1}^rv_k(f) \geq \sum_{k=1}^r(v_k(g_1)+\ldots+v_k(g_s)) = \sum_{j=1}^s(v_1(g_j)+\ldots+v_r(g_j))=
\]
\[
=\langle \chi_1,c\rangle+\ldots\langle \chi_s,c\rangle \geq 0,
\]
concluding the proof of the equality $\M = \CC[X_0]^{(B)}$. This equality also implies that $X_0$ is normal, and by construction $X$ is normal too.

We claim now that $\varphi\colon P/H\to X$ is an open embedding. Being $P$-equivariant and being $P/H$ a single orbit, we must only show that $\varphi$ is generically injective. Since $\Cone$ is strictly convex, the group generated by $\Cone^\vee$ is the whole lattice $\Xi(P/H)$, so $\Xi(P/H)=\Xi(X)$. By~\cite[Lemma~2.4]{Ga11} the map $\varphi$ restricts to an isomorphism between the open $G$-orbit of $P/H$ and the one of $X$, whence $\varphi$ is generically injective.

It remains to prove that $X$ has a unique closed $P$-orbit $Z$, and that the colored cone of $Z^\uparrow$ is $(\Cone,\Colemb)$.

Since the colored cone $(\Cone,\Colemb)$ is $P$-invariant, there exists an element $V_P(P/H)$ in the relative interior of $\Cone$. Since $V_G(P/H)$ contains $V_P(P/H)$, we can write such element as $\rho(v_0)$ where $v_0$ is a $G$-invariant valuation on $\CC(P/H)$. Then $\rho(v_0)$ is non-negative on $\CC[X_0]^{(B)}$, and by~\cite[Corollary~1.7]{Kn91} the valuation $v_0$ is also non-negative on $\CC[X_0]$. Hence $v_0$ has a center on $X_0$; closing this center in $X$ we obtain a closed subvariety $W$ that is $G$-stable because $v_0$ is $G$-invariant.

Let $U\subseteq X$ be a closed $P$-stable subvariety, we claim that $U$ contains $W$. Choose a $G$-invariant valuation $v_1$ whose center is $U$. By construction, all $P$-orbits of $X$ intersect $X_0$, therefore on the affine variety $X_0$ the valuation $v_1$ has center $U\cap X_0$. It follows that $v_1$ is non-negative on $\CC[X_0]$, thus $\rho(v_1)$ is in $\Cone$.

Suppose now $W\not\subseteq U$, by~\cite[Corollary~1.7]{Kn91} there is a function $f\in\CC[X_0]^{(B)}$ such that $v_0(f)=0$ and $v_1(f)>0$. Since $v_0$ is in the relative interior of $\Cone$ and the $B$-eigenvalue of $f$ is in $\Cone^\vee$, the condition $v_0(f)=0$ implies $v_1(f)=0$: contradiction. Therefore we have $W\subseteq U$.

This implies that there is a unique closed $P$-orbit $Z$, and that it contains $W$. The same argument shows that $W$ is contained in all $B$-stable prime divisors of $X_0$, and of course $W$ is not contained in the zero set of $f_0$. This implies $\Cone=\Cone_{W}$ and $\Colemb=\Colemb_{W}$.

By Proposition~\ref{prop:VPisPinvariant} the closure $\overline W$ is $P$-invariant, whence $W=Z^\uparrow$.
\end{proof}

\begin{corollary}\label{cor:Pvalcone}
The set $V_P(P/H)$ is the image under $\rho$ of the set of $P$-invariant discrete valuations of $\CC(P/H)$.
\end{corollary}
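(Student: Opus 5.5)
We show the two inclusions separately.

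\emph{Every point of $V_P(P/H)$ comes from a $P$-invariant valuation.} Let $v\in V_P(P/H)$, and after rescaling assume $v$ is a lattice point. The cone $\Cone=\QQ_{\geq0}v$ with empty set of colors is a colored cone: it is generated by an element of $V_P(P/H)\subseteq V_G(P/H)$, and its relative interior meets $V_P(P/H)$. It is therefore a $P$-invariant colored cone in the sense of Definition~\ref{def:Pequivariantcones}, and it is strictly convex when $v\neq0$; the case $v=0$ is the trivial valuation. Theorem~\ref{thm:existencePsimple} then gives a $P$-simple $P$-embedding $X$ with closed $P$-orbit $Z$ such that $(\Cone_{Z^\uparrow},\Colemb_{Z^\uparrow})=(\Cone,\varnothing)$.

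Since $\Cone$ is a ray with no colors, $Z^\uparrow$ has codimension one, so $\overline{Z^\uparrow}=\overline Z$ is a $G$-stable prime divisor. It is $P$-stable, being the closure of a $P$-orbit. Its divisorial valuation $\Ord_{\overline Z}$ is then $P$-invariant and satisfies $\rho(\Ord_{\overline Z})\in\QQ_{>0}v$. Rescaling the valuation gives exactly $v$.

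\emph{The image of every $P$-invariant valuation lies in $V_P(P/H)$.} Let $w$ be a $P$-invariant discrete valuation, and take $X''$ a complete $P$-toroidal embedding of $P/H$. By completeness, $w$ has a center $C$ in $X''$, and $C$ is $P$-stable since $w$ is $P$-invariant. Since $w$ is in particular $G$-invariant, \cite[Theorem~4.1]{Kn91} (or the description of centers in \cite{Kn91}) says that $\rho(w)$ lies in the relative interior of $\Cone_W$, where $W$ is the $G$-orbit open in $C$. Because $C=\overline W$ is $P$-stable, $W$ is one of the $G$-orbits used in the definition of $V_P(P/H)$. Hence $\rho(w)\in\Cone_W\subseteq V_P(P/H)$.

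The main point to check carefully is this identification of the center: the $G$-orbit $W$ attached to $w$, namely the one with $\rho(w)\in\Cone_W^\circ$, has to be the generic $G$-orbit of the center of $w$. Only then does $P$-stability of the center pass to $\overline W$. This is standard in Knop's theory, since the center of a $G$-invariant valuation is $\overline W$ exactly when $\rho(w)\in\Cone_W^\circ$. Alternatively, one can argue as in the proof of Theorem~\ref{thm:existencePsimple}, using \cite[Corollary~1.7]{Kn91} on a $B$-stable affine chart.
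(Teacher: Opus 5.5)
Your proposal is correct. The first inclusion is the paper's own argument: the ray $\QQ_{\geq 0}c$ with no colors, Theorem~\ref{thm:existencePsimple}, and the closed $P$-orbit being a $P$-stable prime divisor.

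For the reverse inclusion you take a genuinely different and shorter route.

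The paper's route is indirect. It cannot apply Theorem~\ref{thm:existencePsimple} to $(\QQ_{\geq 0}\rho(v),\varnothing)$, since it does not yet know this cone is $P$-invariant. So it re-runs that proof with $r=1$ and $v_1=v$, using the lifting and the inequality from \cite{Pe17}. This builds a $P$-simple embedding in which $v$ is proportional to the valuation of a $P$-stable prime divisor $Z$. It then passes to a completion $X'$ and to a complete $P$-toroidal $X''$ dominating it. There it picks a $P$-stable prime divisor $Z''$ over $\overline Z$ with proportional valuation, and $\rho(Z'')$ lies in $V_P(P/H)$ by definition.

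You instead go straight to a complete $P$-toroidal $X''$ and look at the center $C$ of $w$:
\begin{itemize}
\item $C$ exists because $X''$ is complete;
\item $C$ is $P$-stable because $w$ is $P$-invariant;
\item hence the dense $G$-orbit $W$ of $C$ satisfies $\Cone_W\subseteq V_P(P/H)$, directly by the definition of $V_P(P/H)$.
\end{itemize}
This bypasses the embedding construction and the auxiliary $X'$ entirely. The paper's assertion that $Z''$ exists in fact rests on the same kind of center argument, applied to the proper birational map $X''\to X'$.

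You also do not need the relative-interior statement that you flag as the delicate point, nor the somewhat loose citation of \cite[Theorem~4.1]{Kn91}. Only $\rho(w)\in\Cone_W$ is used, and this is elementary. Take a $B$-eigenvector $f$ whose eigenvalue lies in $\Cone_W^\vee$. Then $f$ has no pole along any $B$-stable prime divisor containing $W$. Its divisor is $B$-stable and $X''$ is normal, so $f$ is regular at the generic point of $\overline W$, and therefore $w(f)\geq 0$. Hence $\rho(w)\in(\Cone_W^\vee)^\vee=\Cone_W$.
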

\begin{proof}
For all $c\in V_P(P/H)$ we can construct a $P$-invariant colored cone $(\Cone,\Colemb)$ setting $\Cone = \QQ_{\geq 0}c$ and $\Colemb=\varnothing$. By Theorem~\ref{thm:existencePsimple}, there exists a $P$-simple $P$-embedding $X$ of $P/H$ with closed $P$-orbit $Z$ such that $(\Cone,\Colemb)$ is the colored cone of $Z^\uparrow$.

We can assume $c\neq 0$, then by~\cite[Lemma~6.4]{Kn91} we have that $Z^\uparrow$ has codimension $1$ in $X$, i.e., its closure $Z$ in $X$ is a $P$-stable prime divisor. Its valuation $v_Z$ is $P$-invariant and $c$ is a positive rational multiple of $\rho(v_Z)$, this shows that $V_P(P/H)$ is contained in the image of the set of $P$-invariant valuations.

Viceversa, let $v$ be a $P$-invariant valuation, consider the couple $(\Cone,\Colemb)$ with $\Cone = \QQ_{\geq 0}\rho(v)$ and $\Colemb=\varnothing$. We cannot apply to $(\Cone,\Colemb)$ the statement of Theorem~\ref{thm:existencePsimple}, but we can apply its proof: the generator $c$ in that proof is just the point $\rho(v)$ here, so we are in the case $r=1$ and $v=v_1$. Therefore there exists a $P$-simple $P$-embedding $X$ of $P/H$ such that $(\Cone,\Colemb)$ is the colored cone of $Z^\uparrow$ where $Z$ is the unique closed $P$-orbit of $X$.

Again $Z$ is a $P$-stable prime divisor of $X$. Let $X'$ and $X''$ be as in the proof of Proposition~\ref{prop:VPisPinvariant}. The inverse image in $X''$ of the closure $\overline Z$ of $Z$ in $X'$ has at least one irreducible component $Z''$ that is a $P$-stable prime divisor of $X''$ such that $\rho(Z'')$ is a non-zero positive scalar multiple of $\rho(Z)$. This shows that $\rho(Z'')$, and thus $\rho(Z)$, is in $V_P(P/H)$.
\end{proof}

\begin{remark}
In~\cite[Definition~7.2]{Pe17} we defined $P$-equivariant colored subspaces in a slightly different way. The two definitions agree thanks to Corollary~\ref{cor:Pvalcone}.
\end{remark}

\begin{proposition}\label{prop:PequivariantLorbit}
Let $X$ be a $P$-equivariant embedding of $P/H$, and let $Z\subseteq X$ be a $G$-orbit. Then the following statements hold.
\begin{enumerate}
\item\label{prop:PequivariantLorbit:Pequivariant} The colored cone $(\Cone_Z,\Colemb_Z)$ is $P$-equivariant.
\item\label{prop:PequivariantLorbit:Pinvariant} The closure $\overline Z$ in $X$ is $P$-stable if and only if we have $Z=Z^\uparrow$ if and only if the colored cone $(\Cone_Z,\Colemb_Z)$ is $P$-invariant. In particular, the set $\Fan(X)^\uparrow$ is the subset of $\Fan(X)$ consisting of the colored cones of $G$-orbits whose closure is $P$-stable, and this induces a bijection between $\Fan(X)^\uparrow$ and the set of $P$-orbits of $X$.
\item\label{prop:PequivariantLorbit:uparrow} We have $(\Cone_{Z^\uparrow},\Colemb_{Z^\uparrow})=(\Cone_Z,\Colemb_Z)^\uparrow$. 
\item\label{prop:PequivariantLorbit:open} The colored cone $\left(\Cone_{(P/H)^\uparrow},\Colemb_{(P/H)^\uparrow}\right)$ is equal to $(\{0\},\varnothing)$ and it is the unique maximal $P$-invariant colored face of $\left(\Cone_{(P/H)^\downarrow},\Colemb_{(P/H)^\downarrow}\right)$. Hence
\[
\Cone_{(P/H)^\downarrow}\cap V_P(P/H)=\{0\}.
\]
\end{enumerate}
\end{proposition}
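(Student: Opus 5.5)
I will prove the four parts in the order (2), (1), (3), (4), since $P$-invariance of the colored cone is what makes the others work.

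\emph{Part (2).} The equivalence of $\overline Z$ being $P$-stable with $Z=Z^\uparrow$ is formal. If $\overline Z$ is $P$-stable, then $PZ\subseteq\overline Z$, so $Z$ is dense in the $P$-orbit $PZ$, i.e.\ $Z=Z^\uparrow$. Conversely, $\overline{Z^\uparrow}=\overline{PZ}$ is $P$-stable.

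By Proposition~\ref{prop:VPisPinvariant}, $P$-invariance of $(\Cone_Z,\Colemb_Z)$ implies that $\overline Z$ is $P$-stable. For the converse, let $\overline Z$ be $P$-stable. Pass to $X'$, $X''$ and $\varphi$ as in the proof of Proposition~\ref{prop:VPisPinvariant}. The preimage $\varphi\inv(\overline Z)$ is $P$-stable, so it contains a $G$-orbit $W$ with $\overline W$ $P$-stable and $\varphi(W)=Z$, hence $\Cone_W\subseteq V_P(P/H)$. By~\cite[Theorem~4.1]{Kn91}, $\Cone_W^\circ$ lies in $\Cone_Z^\circ$, so $\Cone_Z^\circ$ meets $V_P(P/H)$. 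The bijection with $P$-orbits then follows from Lemma~\ref{lemma:uniqueLorbits}.

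\emph{Part (1).} We must show that $\Cone_Z$ is generated by $\rho(\Colemb_Z)$, $G$-stable divisors of $P/H$, and elements of $V_P(P/H)$. The generators of $\Cone_Z$ are $\rho(D)$ for $B$-stable prime divisors $D\supseteq Z$. Colors are fine. A $G$-stable divisor either meets $P/H$, in which case it lies in $\rho(\Div(P/H)^G)$, or it lies in $X\smallsetminus P/H$. In the latter case I claim $D$ is $P$-stable. Otherwise $PD$ is dense in $X$, so $D$ is a $G$-stable, non-$P$-stable divisor contained in the $P$-stable complement of $P/H$, which is impossible since $X\smallsetminus P/H$ is $P$-stable. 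Its valuation is then $P$-invariant and lies in $V_P(P/H)$ by Corollary~\ref{cor:Pvalcone}. This is the key step, and it works smoothly.

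\emph{Part (3).} I will show that $(\Cone_{Z^\uparrow},\Colemb_{Z^\uparrow})$ is a colored face of $(\Cone_Z,\Colemb_Z)$. This holds because $Z\subseteq PZ\subseteq\overline{Z^\uparrow}$, and the face relation is~\cite{Kn91} orbit-closure combinatorics. By (2) this face is $P$-invariant. For maximality, take a $P$-invariant colored face $(\Cone',\Colemb')$ of $(\Cone_Z,\Colemb_Z)$. It corresponds to a $G$-orbit $Z'$ with $Z\subseteq\overline{Z'}$, and by (2) $\overline{Z'}$ is $P$-stable. Hence $\overline{Z'}\supseteq PZ\supseteq Z^\uparrow$, so $\Cone'$ is a face of $\Cone_{Z^\uparrow}$. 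This is the main obstacle: one must check that faces of $\Cone_Z$ are exactly the cones of orbits whose closure contains $Z$, and that the maximal element is unique. The containment argument above gives uniqueness directly.

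\emph{Part (4).} The open $G$-orbit has cone $\{0\}$, and $(P/H)^\uparrow$ is that orbit. Apply (3) with $Z=(P/H)^\downarrow$: the maximal $P$-invariant face is $(\{0\},\varnothing)$. If some nonzero $v$ lay in $\Cone_{(P/H)^\downarrow}\cap V_P(P/H)$, the face whose relative interior contains $v$ would be $P$-invariant and nonzero, a contradiction.
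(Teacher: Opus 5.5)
Your proposal is correct and follows essentially the paper's own argument. You get (1) from the dichotomy for $G$-stable divisors together with Corollary~\ref{cor:Pvalcone}, (2) from Proposition~\ref{prop:VPisPinvariant} and a $G$-orbit $W\subseteq X''$ with $P$-stable closure, $\varphi(W)=Z$ and $\Cone_W^\circ\subseteq\Cone_Z^\circ$, and (3)--(4) from the orbit/colored-face correspondence. One small point of order: the converse in (2) only yields $\Cone_Z^\circ\cap V_P(P/H)\neq\varnothing$, and $P$-invariance also requires the $P$-equivariance of part (1), so (1) should come first. This is harmless, since your proof of (1) does not use (2).
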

\begin{proof}
The cone $\Cone_Z$ is generated by $\rho(\Colemb_Z)$ together with some elements of the form $\rho(D)$ where $D$ is a $G$-stable prime divisor of $X$. In the case where $D$ is $G$-stable, then either $D$ intersect $P/H$, in which case we have $\rho(D)\in\rho\left(\Div(P/H)^G\right)$, or $D$ is $P$-stable. If $D$ is $P$-stable then $v_D$ is $P$-invariant, hence $\rho(D)$ is in $V_P(P/H)$ thanks to Corollary~\ref{cor:Pvalcone}. This shows that $(\Cone_Z,\Colemb_Z)$ is $P$-equivariant, i.e., part~(\ref{prop:PequivariantLorbit:Pequivariant}).

If the colored cone $(\Cone_Z,\Colemb_Z)$ is $P$-invariant then $\overline Z$ is $P$-stable by Proposition~\ref{prop:VPisPinvariant}. For the reverse implication we suppose $\overline Z$ is $P$-stable and we proceed as in the proof of Corollary~\ref{cor:Pvalcone}. 

More precisely, consider $X'$, $X''$, and $\varphi$ as in the proof of Proposition~\ref{prop:VPisPinvariant}. The closure $\overline Z$ in $X'$ is $P$-stable. Let $W\subseteq X''$ be a $G$-orbit that is dense in an irreducible component of $\varphi\inv(Z)$. We have $\varphi(W)=Z$ by $G$-equivariance of $\varphi$, and $W$ is maximal having this property. From these considerations and from embedding theory we deduce that $(\Cone_W,\Colemb_W)$ is a minimal element of $\Fan(X'')$ such that $\Cone_W$ is contained in $\Cone_Z$ and intersects the relative interior of $\Cone_Z$. So the relative interior of $\Cone_W$ is contained in the relative interior of $\Cone_Z$.

On the other hand the closure of $W$ is an irreducible component of $\varphi\inv\left(\overline Z\right)$, and since $P$ is connected we obtain that $\overline W$ is $P$-stable. Therefore $\Cone_W$ is contained in $V_P(P/H)$, and we conclude that the relative interior of $\Cone_Z$ intersects $V_P(P/H)$. In other words $(\Cone_Z,\Colemb_Z)$ is $P$-invariant. Part~(\ref{prop:PequivariantLorbit:Pinvariant}) is proven.

We prove part~(\ref{prop:PequivariantLorbit:uparrow}). The colored cone $(\Cone_{Z^\uparrow},\Colemb_{Z^\uparrow})$ is a colored face of $(\Cone_Z,\Colemb_Z)$ by embedding theory. Thanks to the first part of the proof, the $P$-invariant colored cones in $\Fan(X)$ correspond to the $P$-orbits of $X$, and it is enough to notice that $PZ$ is the smallest $P$-orbit containing $Z$ in its closure.

Part~(\ref{prop:PequivariantLorbit:open}) stems from~(\ref{prop:PequivariantLorbit:uparrow}).
\end{proof}

As usual, $G$-orbits in a $G$-variety have a natural partial order given by the inclusion of closures, and colored cones have a natural partial order given by being a colored face.

\begin{corollary}
Let $X$ be a $P$-embedding of $P/H$ and $Z\subseteq X$ be a $G$-orbit with $P$-stable closure. Then $W\mapsto (\Cone_W,\Colemb_W)$ is an order-reversing  bijection between the set of $G$-orbits of with $P$-stable closure of $X$ and the set of $P$-invariant colored faces of $(\Cone_Z,\Colemb_Z)$.
\end{corollary}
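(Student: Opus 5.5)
The statement presumably concerns the $G$-orbits $W$ with $P$-stable closure such that $\overline W\supseteq Z$, and we combine standard embedding theory with Proposition~\ref{prop:PequivariantLorbit}. Order-reversing here means $\overline{W}\supseteq W'$ exactly when $(\Cone_W,\Colemb_W)$ is a colored face of $(\Cone_{W'},\Colemb_{W'})$.

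First we recall the reductive fact from~\cite{Kn91}, already quoted in Section~\ref{s:notations}. The map $W\mapsto(\Cone_W,\Colemb_W)$ is a bijection from the $G$-orbits $W$ of $X$ with $\overline W\supseteq Z$ onto the colored faces of $(\Cone_Z,\Colemb_Z)$. It is order-reversing, since $\overline W\supseteq W'$ iff $(\Cone_W,\Colemb_W)$ is a colored face of $(\Cone_{W'},\Colemb_{W'})$. Injectivity on all of $\Fan(X)$ holds because distinct $G$-orbits have distinct colored cones.

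Next we restrict this bijection to both subsets. By Proposition~\ref{prop:PequivariantLorbit}(\ref{prop:PequivariantLorbit:Pinvariant}), a $G$-orbit $W$ has $P$-stable closure if and only if $(\Cone_W,\Colemb_W)$ is $P$-invariant. We also need that a colored face of $(\Cone_Z,\Colemb_Z)$ is $P$-invariant in the sense of Definition~\ref{def:Pequivariantcones} exactly when its relative interior meets $V_P(P/H)$. For this we check that every colored face of a $P$-equivariant colored cone is again $P$-equivariant. A face of a cone generated by a finite set is generated by the generators lying in it. Hence the face $\Cone'$ is generated by $\rho(\Colemb\cap\rho\inv(\Cone'))=\rho(\Colemb')$, by some elements of $\rho(\Div(P/H)^G)$, and by some elements of $V_P(P/H)$. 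Combined with part~(\ref{prop:PequivariantLorbit:Pequivariant}), this gives the claim. So the restricted map sends $G$-orbits with $P$-stable closure containing $Z$ bijectively onto the $P$-invariant colored faces of $(\Cone_Z,\Colemb_Z)$, and order reversal is inherited.

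The main obstacle is interpretive. The statement as printed omits the condition ``containing $Z$ in its closure'', and without it the claim would be false, since not every $G$-orbit with $P$-stable closure has $\overline W\supseteq Z$. I would state explicitly that the domain consists of orbits $W$ with $\overline W\supseteq Z$ and $\overline W$ $P$-stable. I would also note that each such $\overline W$, being $P$-stable and containing $Z$, contains $PZ$, so $W$ corresponds to a $P$-orbit $W$ whose closure contains the $P$-orbit $PZ$. This links the statement to part~(\ref{prop:PequivariantLorbit:uparrow}), but it is not needed for the proof.
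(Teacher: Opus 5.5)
Your proposal is correct and follows the paper's proof. The paper likewise just combines Knop's orbit--face correspondence \cite[Lemma~3.2]{Kn91}, applied to the $G$-orbits whose closure contains $Z$, with part~(\ref{prop:PequivariantLorbit:Pinvariant}) of Proposition~\ref{prop:PequivariantLorbit}. Your reading of the domain as the $G$-orbits $W$ with $P$-stable closure and $\overline W\supseteq Z$ is the intended one, and your separate check that colored faces of $P$-equivariant cones are $P$-equivariant is valid but could be skipped: every colored face of $(\Cone_Z,\Colemb_Z)$ is some $(\Cone_W,\Colemb_W)$, to which part~(\ref{prop:PequivariantLorbit:Pequivariant}) applies directly.
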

\begin{proof}
The corollary stems from~\cite[Lemma~3.2]{Kn91} and part~(\ref{prop:PequivariantLorbit:Pinvariant}) of the above proposition.
\end{proof}

\begin{corollary}
If $\Sigma_P(P/H)=\Sigma_G(P/H)$ then each $P$-orbit is a single $G$-orbit in any $P$-embedding of $P/H$.
\end{corollary}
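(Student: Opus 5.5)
Since both $V_P(P/H)$ and $V_G(P/H)$ are cut out in $N(P/H)$ by the inequalities $\langle\sigma,\cdot\rangle\leq 0$ for $\sigma$ in $\Sigma_P(P/H)$ and in $\Sigma_G(P/H)$ respectively, the hypothesis $\Sigma_P(P/H)=\Sigma_G(P/H)$ gives $V_P(P/H)=V_G(P/H)$. I will then show that every $G$-orbit of a $P$-embedding $X$ has $P$-stable closure, and deduce the statement from Proposition~\ref{prop:PequivariantLorbit}.

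Let $Z\subseteq X$ be any $G$-orbit. By standard embedding theory (recalled in Section~\ref{s:notations}), the relative interior of $\Cone_Z$ meets $V_G(P/H)$, and hence meets $V_P(P/H)$ by the equality above. Part~(\ref{prop:PequivariantLorbit:Pequivariant}) of Proposition~\ref{prop:PequivariantLorbit} says $(\Cone_Z,\Colemb_Z)$ is $P$-equivariant, so it is $P$-invariant. Part~(\ref{prop:PequivariantLorbit:Pinvariant}) then gives $Z=Z^\uparrow$ and $\overline Z$ $P$-stable. Proposition~\ref{prop:VPisPinvariant} alone would also give the $P$-stability of $\overline Z$.

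Now fix a $P$-orbit $Y$ of $X$. By Lemma~\ref{lemma:uniqueLorbits}, $Y$ contains the $G$-orbits $Y^\uparrow$ (open in $Y$) and $Y^\downarrow$ (closed in $Y$). The previous paragraph applied to $Z=Y^\downarrow$ gives $Y^\downarrow=(Y^\downarrow)^\uparrow=Y^\uparrow$. Every $G$-orbit in $Y$ lies in the closure in $Y$ of the dense orbit $Y^\uparrow$ and contains $Y^\downarrow$ in its closure, since $Y^\downarrow$ is the unique closed $G$-orbit of $Y$ and lies in the closure of every $G$-orbit of $Y$. So $Y$ is a single $G$-orbit. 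Alternatively: every $G$-orbit $W\subseteq Y$ satisfies $W=W^\uparrow=Y^\uparrow$, which already gives the claim.

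The only delicate point is the first step, passing from the equality of spherical roots to the equality of cones. This holds because $\Sigma_P$ and $\Sigma_G$ are defined as the minimal sets of primitive elements whose dual inequalities cut out the respective cones, so equal sets define equal cones. Everything else is a direct application of Proposition~\ref{prop:PequivariantLorbit}.
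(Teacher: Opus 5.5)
Your proof is correct and follows the paper's argument. Equal spherical roots give $V_P(P/H)=V_G(P/H)$, so every colored cone of $\Fan(X)$ is $P$-invariant, and Proposition~\ref{prop:PequivariantLorbit} then yields $Z=Z^\uparrow$ for every $G$-orbit $Z$. You spell out the steps that the paper's one-line proof leaves implicit, including the final passage from $W=W^\uparrow$ for all $G$-orbits $W$ to each $P$-orbit being a single $G$-orbit.
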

\begin{proof}
If the valuations cones coincide then all colored cones of any $P$-embedding are $P$-invariant.
\end{proof}

\begin{theorem}\label{thm:classification}
The map $X\mapsto \Fan(X)^\uparrow$ induces a bijection between the set of $P$-embeddings of $P/H$, up to isomorphisms, and the set of strictly convex colored $P$-fans in $N(P/H)$.
\end{theorem}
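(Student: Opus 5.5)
The proof follows the scheme of~\cite[Theorem~3.3]{Kn91}, reducing everything to the $P$-simple case settled in Proposition~\ref{prop:Psimpleuniqueness} and Theorem~\ref{thm:existencePsimple}. There are three steps: well-definedness, injectivity, and surjectivity.

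\textbf{Well-definedness.} Let $X$ be a $P$-embedding. By Proposition~\ref{prop:PequivariantLorbit}, every element of $\Fan(X)^\uparrow$ is a $P$-invariant colored cone, and these elements correspond bijectively to the $P$-orbits of $X$. Each is strictly convex, since $\Fan(X)$ is a strictly convex colored fan. The condition that at most one cone has a given point of $V_G$ in its relative interior is inherited from $\Fan(X)$. For closure under $P$-invariant faces, let $(\Cone',\Colemb')$ be a $P$-invariant colored face of $(\Cone_Z,\Colemb_Z)\in\Fan(X)^\uparrow$. It is a colored face, so it equals $(\Cone_W,\Colemb_W)$ for some $G$-orbit $W$ with $Z\subseteq\overline W$. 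Part~(\ref{prop:PequivariantLorbit:Pinvariant}) of Proposition~\ref{prop:PequivariantLorbit} then shows that $\overline W$ is $P$-stable, so this face lies in $\Fan(X)^\uparrow$. Hence $\Fan(X)^\uparrow$ is a strictly convex colored $P$-fan.

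\textbf{Injectivity.} Cover $X$ by the $P$-stable open subsets
\[
X_Z=\{x\in X \;|\; \overline{Px}\supseteq Z\},
\]
where $Z$ runs over the $P$-orbits. Each $X_Z$ is a $P$-simple $P$-embedding with closed orbit $Z$, and by Proposition~\ref{prop:Psimpleuniqueness} it is determined by $(\Cone_{Z^\uparrow},\Colemb_{Z^\uparrow})\in\Fan(X)^\uparrow$. Suppose $\Fan(X)^\uparrow=\Fan(X')^\uparrow$. Then the corresponding pieces are isomorphic, and the isomorphisms are compatible on overlaps because they all extend the identity of $P/H$ and $X'$ is separated. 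They therefore glue to an isomorphism $X\to X'$. To identify $X_Z\cap X_{Z'}$ with the union of the $X_W$ for $W$ a common face, I use the corollary above on order-reversing bijections.

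\textbf{Surjectivity.} This is where the real work lies. Given a strictly convex colored $P$-fan $\Fan$, Theorem~\ref{thm:existencePsimple} gives for each $(\Cone,\Colemb)\in\Fan$ a $P$-simple embedding $X_{(\Cone,\Colemb)}$. For a $P$-invariant face, $X_{\text{face}}$ embeds as an open $P$-stable subset of $X_{(\Cone,\Colemb)}$: by the face correspondence it is the union of $P$-orbits whose colored cones are faces of the given face, and it is again $P$-simple with the right cone, so uniqueness identifies it.

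I then glue all the $X_{(\Cone,\Colemb)}$ along the open subsets $X_{(\Cone,\Colemb)\cap(\Cone',\Colemb')}$, where the intersection is taken as the largest common $P$-invariant face. The main obstacle is showing that the glued space is separated. I would follow Knop's valuative argument: it suffices to show that a $G$-invariant valuation with centers in both pieces gives the same center. This reduces to the fact that $\rho(v)$ lies in the relative interior of a unique cone of $\Fan$. One subtlety is that $\rho(v)$ might lie in $\Cone$ only through a non-$P$-invariant face. In that case I would pass to $(\cdot)^\uparrow$ using part~(\ref{prop:PequivariantLorbit:uparrow}) of Proposition~\ref{prop:PequivariantLorbit}, and use the fan condition on $V_G$ together with Proposition~\ref{prop:VPisPinvariant} to locate the center inside a common $P$-simple open piece. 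Finally, normality and the equality $\Fan(X)^\uparrow=\Fan$ follow from the pieces and from Proposition~\ref{prop:PequivariantLorbit}.
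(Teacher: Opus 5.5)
Your overall plan is the paper's: injectivity via the covering by the $P$-simple open sets $X_{Z,P}$ and Proposition~\ref{prop:Psimpleuniqueness}, surjectivity by gluing as in \cite[Theorem~3.3]{Kn91}. Your well-definedness and injectivity steps are fine. The gap is in the separatedness step of surjectivity.

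\textbf{Why your separatedness argument fails.} You test separatedness with $G$-invariant valuations and claim this reduces to $\rho(v)$ lying in the relative interior of a unique element of $\Fan$. This reduction is false, because a $P$-simple piece $X_{(\Cone,\Colemb)}$ is not $G$-simple. Its full colored fan contains cones that are not faces of $\Cone$, and the axioms of a colored $P$-fan say nothing about them.

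Take Example~\ref{ex:P2}. The piece $\PP^2\smallsetminus\{[e_1]\}$ has colored $P$-fan $\{(\{0\},\varnothing),(\QQ_{\geq0}v_3,\varnothing)\}$. Yet it contains the $G$-orbit $\{[e_2]\}$, whose cone is generated by $v_1,v_3$. So a $G$-invariant $v$ with $\rho(v)$ in the interior of that cone has a center on this piece, while $\rho(v)$ lies in no element of the $P$-fan, not even in $\Cone$. So the issue is not that $\rho(v)$ lies in $\Cone$ only through a non-$P$-invariant face.

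Your remedy does not work either. $(\Cone_W,\Colemb_W)^\uparrow$ is, whenever $W\neq W^\uparrow$, a proper face of $\Cone_W$ not containing $\rho(v)$. Hence the uniqueness axiom of the $P$-fan cannot be used to compare the centers $W_1,W_2$ in two pieces, whose full colored fans are not known to form a common fan.

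\textbf{The fix.} Run the valuative criterion with $P$-invariant valuations instead; this is how Knop's argument transfers.

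\emph{The criterion holds for any connected $P$.} Suppose gluing $X_1,X_2$ is not separated. Then the closure $\Gamma$ of the diagonal of $P/H$ in $X_1\times X_2$ is $P$-stable and not contained in $(X_1\cap X_2)^2$. An irreducible component $Y$ of $\Gamma\smallsetminus(X_1\cap X_2)^2$ is $P$-stable. Take a component of the exceptional divisor of the normalized blow-up of $Y$ that dominates $Y$; it is $P$-stable, and its valuation is $P$-invariant with center $Y$. That valuation has two distinct centers in the glued space.

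\emph{Centers of $P$-invariant valuations.} For $P$-invariant $v$, Corollary~\ref{cor:Pvalcone} gives $\rho(v)\in V_P(P/H)$. A center of $v$ on $X_{(\Cone,\Colemb)}$ is $P$-stable, so it meets the affine chart $X_0$. Hence, by \cite[Corollary~1.7]{Kn91} as in the proof of Theorem~\ref{thm:existencePsimple}, $v$ has a center there iff $\rho(v)\in\Cone$. The face of $(\Cone,\Colemb)$ with $\rho(v)$ in its relative interior is automatically $P$-invariant, since faces of $P$-equivariant cones are $P$-equivariant. So this face lies in $\Fan$, it is the same face for both pieces by the fan axiom, and both centers lie in the common open piece attached to it.

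\emph{Your gluing step.} The same trick justifies the ``largest common $P$-invariant face'' you glue along. Given two common $P$-invariant faces $\sigma_1,\sigma_2$, pick $x_i\in\sigma_i^\circ\cap V_P(P/H)$. Then $x_1+x_2\in V_P(P/H)$ lies in the relative interior of a common $P$-invariant face containing both $\sigma_1$ and $\sigma_2$.
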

\begin{proof}
The proof is the same as the one of~\cite[Theorem~3.3]{Kn91}. If $X$ is an arbitrary $P$-embedding of $P/H$, then for each $P$-orbit $Z$ in $X$ we recall the subset
\[
X_{Z,P} = \left\{ x\in X\;\middle|\; \overline{Px}\supseteq Z\right\}.
\]
It is open, $P$-stable, itself a $P$-simple embedding of $P/H$ with closed $P$-orbit, and such subsets form an open covering of $X$. Given two $P$-orbits $Z,Z'$ the intersection $X_{Z,P}\cap X_{Z',P}$ is the union of the open sets $X_{W,P}$ for all $P$-orbit $W$ containing the union $Z\cup Z'$. At this point, the injectivity in the statement of the theorem follows easily from Proposition~\ref{prop:Psimpleuniqueness}.

The proof of the surjectivity is exactly as in the proof of~\cite[Theorem~3.3]{Kn91}.
\end{proof}

We end this section with a criterion for an embedding to be $P$-toroidal, which is a weaker property than being $G$-toroidal.

\begin{proposition}
Let $X$ be a $P$-embedding of $P/H$. Then $X$ is $P$-toroidal if and only if all colored cones $(\Cone,\Colemb)$ in $\Fan(X)^\uparrow$ satisfy $\Cone\subseteq V_P(P/H)$ and $\Colemb=\varnothing$.
\end{proposition}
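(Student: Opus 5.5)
We argue through the dictionary of Proposition~\ref{prop:PequivariantLorbit}: the elements of $\Fan(X)^\uparrow$ are exactly the colored cones $(\Cone_{Z^\uparrow},\Colemb_{Z^\uparrow})$ for $Z$ ranging over the $P$-orbits of $X$. A $B$-stable prime divisor $D$ contains a $P$-orbit $Z$ if and only if it contains the dense $G$-orbit $Z^\uparrow$, since $D$ is closed. Hence $\Cone_{Z^\uparrow}$ is generated by $\rho(D)$ for $D\in\Div(X)^B$ containing $Z$, and $\Colemb_{Z^\uparrow}$ is the set of colors containing $Z$. We will split $\Div(X)^B$ into three disjoint types: colors; $G$-stable divisors meeting $P/H$, which are closures of elements of $\Div(P/H)^G$ and are not $P$-stable; and $P$-stable divisors, which lie in $X\smallsetminus P/H$.

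($\Rightarrow$) Assume $X$ is $P$-toroidal and let $Z$ be a $P$-orbit. Every $D\in\Div(X)^B$ containing $Z$ is $P$-stable. In particular $D$ is $G$-stable, so it is not a color, and therefore $\Colemb_{Z^\uparrow}=\varnothing$. Moreover $v_D$ is $P$-invariant, so $\rho(D)\in V_P(P/H)$ by Corollary~\ref{cor:Pvalcone}. Since $V_P(P/H)$ is a convex cone, it follows that $\Cone_{Z^\uparrow}\subseteq V_P(P/H)$.

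($\Leftarrow$) Assume the condition and let $D\in\Div(X)^B$ contain some $P$-orbit $Z$. Since $\Colemb_{Z^\uparrow}=\varnothing$, $D$ is not a color, so $D$ is $G$-stable. Suppose for contradiction that $D$ is not $P$-stable. Then $D$ meets $P/H$, because otherwise $D$ would be an irreducible component of the $P$-stable closed set $X\smallsetminus P/H$ and would be $P$-stable, as $P$ is connected. Thus $D\cap P/H$ is a $G$-stable prime divisor of $P/H$, and it contains $(P/H)^\downarrow$ by Lemma~\ref{lemma:uniqueLorbits}(2), so $\rho(D)\in\Cone_{(P/H)^\downarrow}$. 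But $\rho(D)$ also lies in $\Cone_{Z^\uparrow}\subseteq V_P(P/H)$. Part~(\ref{prop:PequivariantLorbit:open}) of Proposition~\ref{prop:PequivariantLorbit} gives $\Cone_{(P/H)^\downarrow}\cap V_P(P/H)=\{0\}$, so $\rho(D)=0$.

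The main obstacle is ruling out $\rho(D)=0$. The embedding theory recalled in Section~\ref{s:notations} says that $\rho(D)$ generates an extremal ray of $\Cone_{Z^\uparrow}$ for every $G$-stable $D\supseteq Z^\uparrow$, so $\rho(D)\neq 0$, which is the desired contradiction. Hence $D$ is $P$-stable and $X$ is $P$-toroidal. The only delicate input is that this nonvanishing holds for every $G$-stable prime divisor containing a $G$-orbit. This is standard (Knop~\cite{Kn91}): a $G$-invariant valuation with $\rho(v)=0$ is trivial, so it cannot be the valuation of a divisor.
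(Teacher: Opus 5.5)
Your proof is correct. The forward direction matches the paper's; the converse takes a different and somewhat longer route.

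\textbf{Converse in the paper.} Once $D$ is known to be $G$-stable with $\rho(D)\in\Cone_{Z^\uparrow}\subseteq V_P(P/H)$, the paper looks at the $G$-orbit $D_0$ dense in $D$. Its colored cone is $(\QQ_{\geq 0}\rho(D),\varnothing)$, whose relative interior meets $V_P(P/H)$. Part~(\ref{prop:PequivariantLorbit:Pinvariant}) of Proposition~\ref{prop:PequivariantLorbit} (ultimately Proposition~\ref{prop:VPisPinvariant}) then gives directly that $D=\overline{D_0}$ is $P$-stable.

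\textbf{Your converse.} You argue by contradiction through the trichotomy of $B$-stable divisors. A $G$-stable but non-$P$-stable $D$ must meet $P/H$, since otherwise it is a component of $X\smallsetminus P/H$. It therefore contains $(P/H)^\downarrow$ by Lemma~\ref{lemma:uniqueLorbits}. Part~(\ref{prop:PequivariantLorbit:open}) then forces $\rho(D)=0$, which is impossible.

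\textbf{Comparison.} Part~(\ref{prop:PequivariantLorbit:open}) is itself deduced via parts~(\ref{prop:PequivariantLorbit:Pinvariant}) and~(\ref{prop:PequivariantLorbit:uparrow}) from Proposition~\ref{prop:VPisPinvariant}, so both arguments rest on the same input. The paper's version is more economical: it needs neither Lemma~\ref{lemma:uniqueLorbits}, nor the irreducible-component argument for $X\smallsetminus P/H$, nor the nonvanishing of $\rho(D)$, which you single out as the delicate point. That step is fine as you justify it, since $\rho(D)$ spans an extremal ray and $G$-invariant valuations are determined by their values on $B$-eigenvectors. What your route buys is a more explicit geometric picture. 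The non-$P$-stable $G$-stable divisors of $X$ are exactly the closures of $G$-stable divisors of $P/H$, and their rays lie in $\Cone_{(P/H)^\downarrow}$, which meets $V_P(P/H)$ only at the origin.
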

\begin{proof}
Suppose $X$ is $P$-toroidal and consider $(\Cone,\Colemb)\in\Fan(X)^\uparrow$. By Proposition~\ref{prop:PequivariantLorbit} this colored cone corresponds to a $G$-orbit $Z$ with $P$-stable closure. Any element of $\Colemb$ is a color of $X$ containing $Z$, hence containing $\overline Z$ which is a union of $P$-orbits. Since $X$ is $P$-toroidal, no such element exists so $\Colemb=\varnothing$.

This implies that $\Cone$ is generated by the valuations of the $G$-stable prime divisors containing $Z$. Again, any such prime divisor contains $\overline Z$ hence it contains a $P$-orbit, whence it must be $P$-stable. Its valuation is in $V_P(P/H)$ by Corollary~\ref{cor:Pvalcone}. We conclude that $\Cone$ is contained in $V_P(P/H)$.

We prove the converse. Let $D$ be a $B$-stable prime divisor containing a $P$-orbit $Z$. Since $D$ contains the $G$-orbit $W=Z^\uparrow$ dense in $Z$, the valuation $\rho(D)$ is involved in the colored cone $(\Cone_W,\Colemb_W)$ in the sense that either $D$ is a color in $\Colemb_W$, or $D$ is $G$-stable and $\rho(D)$ generates an extremal ray of $\Cone_W$.

But $\Colemb_W$ is empty by assumption, so $D$ is $G$-stable and $\rho(D)$ is contained in $\Cone_W$ which is contained in $V_P(P/H)$. Let $D_0$ be the $G$-orbit dense in $D$ and apply Proposition~\ref{prop:PequivariantLorbit} to the colored cone $(\Cone_{D_0},\Colemb_{D_0})$, whose relative interior intersects $V_P(P/H)$. We obtain that the closure $D$ of $D_0$ is $P$-stable.
\end{proof}

\begin{remark}
Since $\Fan(X)^\uparrow$ determines completely any given $P$-embedding $X$, it also determines uniquely the entire colored fan $\Fan(X)$. It would be natural to look for a purely combinatorial description of $\Fan(X)$ in terms of $\Fan(X)^\uparrow$. A related question is to provide a combinatorial characterization of those colored fans $\Fan$ corresponding to $G$-equivariant embeddings (of the open $G$-orbit of $P/H$) that contain $P/H$ and are actually $P$-equivariant embeddings of $P/H$.

We believe that such descriptions are possible, but they seem to require some convoluted combinatorial considerations involving the classification of smooth affine spherical variaties. For this reason these problems go beyond the scope of the present work.
\end{remark}

\section{The spherical roots and the action of $P^u$}\label{s:PuSigma}

So far, the embedding theory for spherical $P$-varieties looks quite similar to the one for $G$-varieties, with $V_G(X)$ replaced by $V_P(X)$ and $\Fan(X)$ replaced by $\Fan(X)^\uparrow$.

In this section we show a relevant difference: the spherical roots are closely related to the action of $P^u$, something that is not present in the literature on the purely reductive case. We will also show how to apply our results on $P^u$ in the classical reductive case by means of Proposition~\ref{prop:fiberoverflag}.

Recall that the Lie algebra%
\footnote{We denote Lie algebras of groups by the corresponding lower-case fraktur letter.}
$\fp^u$ of $P^u$ is a completely reducible $G$-module under the adjoint representation, and that the exponential map $\Exp\colon \fp^u\to P^u$ is a $G$-equivariant isomorphism of varieties, where $G$ acts on $P^u$ by conjugation.

\begin{definition}
\begin{enumerate}
\item Let $S^p(P/H)$ be the set of simple roots corresponding to the stabilizer%
\footnote{The correspondence is meant in such a way that $S^p(P/H)$ is the set of simple roots of a Levi subgroup of this stabilizer, which is a parabolic subgroup of $G$ containing $B$.}
in $G$ of the open $B$-orbit of $P/H$. 
\item Let $\sigma\in \Sigma_P(P/H)\smallsetminus \Sigma_G(P/H)$. We set
\[
S(\sigma) = S^p(P/H)\cup \left\{\alpha\in S\;\middle|\; \<\sigma,\alpha^\vee\> >0 \right\}
\]
and we denote by $w_{0,\sigma}$ the longest element in the corresponding parabolic subgroup of the Weyl group of $G$.
\end{enumerate}
\end{definition}

We start with a lemma on spherical roots which generalizes a known fact: for any spherical $G$-variety $X$, on each element of $\Sigma_G(X)$ at least one and at most two simple coroots of $G$ take a strictly positive value. Our version for elements of $\Sigma_P(P/H)\smallsetminus \Sigma_G(P/H)$ limits this number to at most one simple coroot. This is the expected number, for technical reasons of combinatorial nature related to Proposition~\ref{prop:fiberoverflag}.

\begin{lemma}\label{lemma:rank1}
Let $\sigma\in \Sigma_P(P/H)\smallsetminus \Sigma_G(P/H)$. Then the following statements hold.
\begin{enumerate}
\item\label{lemma:rank1:alpha} There exists at most one simple root $\alpha\in S$ satisfying $\<\sigma,\alpha^\vee\> >0$.
\item Let $X$ be a complete $P$-toroidal $P$-embedding of $P/H$, and choose a $G$-orbit $Z$ such that $\Cone_Z$ is contained in $V_P(P/H)$, has codimension $1$ in $N(P/H)$ and is orthogonal to $\sigma$. Let $J=P_z$ the stabilizer of a point $z\in Z$. Then the following statements hold.
\begin{enumerate}
\item\label{lemma:rank1:Y} The closure $\overline Z$ is $P$-stable, it has rank $1$, it is not $P$-horospherical but it is $G$-horospherical, and $P^u$ does not act trivially on $\overline Z$.
\item\label{lemma:rank1:Q} The product $Q=JP^u$ is the unique parabolic subgroup of $P$ minimal containing $J$.
\item\label{lemma:rank1:Z} There exists a unique $B$-stable prime divisor $D\subseteq P/J$ mapped dominantly to $P/Q$, and $\rho(D)$ is strictly positive on $\sigma$.
\item\label{lemma:rank1:M} We may choose $z$ so that $Q$ contains $B_-$, in which case let $M$ be the Levi subgroup of $Q$ containing $T$. Then the set of simple roots of $M$ is $S(\sigma)$.
\end{enumerate}
\end{enumerate}
\end{lemma}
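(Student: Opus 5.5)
Since part~(\ref{lemma:rank1:alpha}) follows from parts~(\ref{lemma:rank1:Z}) and~(\ref{lemma:rank1:M}), I will prove the assertions about $\overline Z$ first and deduce~(\ref{lemma:rank1:alpha}) last. Fix $X$ and $Z$ as in the statement. Such a $Z$ exists because $V_P(P/H)$ is the union of the cones $\Cone_Z$ over $G$-orbits with $P$-stable closure, and $\sigma$ defines a facet of $V_P(P/H)$; this facet is covered by finitely many such cones, so one of them has codimension $1$.

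For~(\ref{lemma:rank1:Y}), I will use the definition of $V_P$ and Proposition~\ref{prop:PequivariantLorbit}. Since $\Cone_Z\subseteq V_P(P/H)$ meets $V_P$ in its relative interior, $\overline Z$ is $P$-stable. As $X$ is $P$-toroidal, the colored cone of $Z$ has no colors, so it is generated by $P$-stable divisors. By standard embedding theory, $N(\overline Z)$ is then the quotient $N(P/H)/\Span\Cone_Z$, which has dimension $1$. The $P$-toroidal embedding $\overline Z$ of $PZ$ has its $P$-valuation cone equal to the image of the star of $\Cone_Z$ in the fan, intersected with $V_P$. Since $\Cone_Z$ lies on the facet $\sigma^\perp$, this image is a half-line, so $\overline Z$ is not $P$-horospherical. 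On the other hand, $\sigma\notin\Sigma_G$, so $\sigma^\perp$ meets the interior of $V_G(P/H)$. Hence the image of $V_G$ is the whole line and $\overline Z$ is $G$-horospherical. If $P^u$ acted trivially on $\overline Z$, then $P$- and $G$-invariant valuations would coincide and $V_P(\overline Z)=V_G(\overline Z)$, a contradiction.

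For~(\ref{lemma:rank1:Q}), I will use the proof of Lemma~\ref{lemma:uniqueLorbits}. A minimal parabolic $Q\supseteq J$ contains $P^u$, so $JP^u\subseteq Q$. For the converse I need $JP^u$ to be parabolic. Since $\overline Z$ is $G$-horospherical, $J\cap G$ should contain a maximal unipotent subgroup of $G$ up to conjugacy, so $JP^u$ is parabolic. Uniqueness follows because every parabolic containing $J$ also contains $JP^u$.

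For~(\ref{lemma:rank1:Z}) and~(\ref{lemma:rank1:M}), I will apply Proposition~\ref{prop:fiberoverflag} to the fibration $P/J\to P/Q=G/(Q\cap G)$. Choosing $z$ so that $Q\supseteq B_-$, the fiber $Q/J$ is an affine spherical $M$-variety of rank $1$. The $B$-stable divisors that map dominantly onto $P/Q$ correspond to the $B_M$-stable divisors of the fiber. Because the fiber is affine of rank one and not $P$-horospherical, there should be exactly one such color $D$, with $\rho(D)$ on the ray opposite to $V_Q$, which gives $\<\sigma,\rho(D)\>>0$. The colors pulled back from $P/Q$ correspond to the simple roots in $S\smallsetminus S_M$. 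Comparing with $S^p$ and with the sign of $\<\sigma,\alpha^\vee\>$ via the standard formula for $\rho$ of colors on $G$-horospherical varieties should then give $S_M=S(\sigma)$.

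Finally, for~(\ref{lemma:rank1:alpha}), suppose two simple roots $\alpha,\beta$ are positive on $\sigma$. Then both lie in $S(\sigma)=S_M$. Since the rank-one fiber carries a single color, the Weyl group of $M$ should be forced to produce at most one such root, giving a contradiction.

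The main obstacle I expect is the precise identification of the colors in~(\ref{lemma:rank1:M}) and the deduction of~(\ref{lemma:rank1:alpha}). I would try to do this through the classification of rank-one affine $M$-spherical varieties that are $M$-horospherical but carry a nontrivial unipotent action.
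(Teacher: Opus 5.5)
Your argument for (\ref{lemma:rank1:Y}) is fine. In particular, deducing that $P^u$ acts non-trivially from the fact that otherwise $P$- and $G$-invariant valuations of $\CC(\overline Z)$ would coincide is a valid substitute for the paper's appeal to \cite[Proposition~6.14]{Pe17}.

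The first gap is in (\ref{lemma:rank1:Q}). Containing a maximal unipotent subgroup only makes $JP^u$ horospherical, not parabolic: a maximal unipotent subgroup of $\SL(2)$ is not parabolic. So $G$-horosphericity of $Z$ does not give that $P/JP^u$ is complete. The missing ingredient is the non-triviality of $P^u$ on $\overline Z$ that you have just proved. The fiber $JP^u/J\cong P^u/(J\cap P^u)$ is connected and positive-dimensional. The paper uses exactly this to exclude that $P/JP^u$ has rank $1$, since that would force $P/J\to P/JP^u$ to have finite fibers. Hence $P/JP^u$ has rank $0$ and $JP^u$ is parabolic.

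The more serious gap concerns (\ref{lemma:rank1:Z}), (\ref{lemma:rank1:M}) and (\ref{lemma:rank1:alpha}).

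First, $D$ need not be a color; it can be $G$-stable. In that case the fiber $U=Q/J$ (the irreducible $M$-module $\fp^u/(\fj\cap\fp^u)$) is one-dimensional, and $D$ is the zero section of the line bundle $P/J\to P/Q$. This is forced, e.g., when $G$ is a torus as in Example~\ref{ex:toric}, where there are no colors at all.

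Second, $\langle\sigma,\rho(D)\rangle>0$ needs a real argument. The paper gets it from \cite[Theorem~7.6]{Pe17}: $\Cone_\pi=N(P/J)$ must be generated by elements of $V_P(P/J)=\{\langle\sigma,-\rangle\le 0\}$ together with $\rho(D)$.

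Most importantly, the heart of (\ref{lemma:rank1:M}) and (\ref{lemma:rank1:alpha}) is to show that every color $D_\beta\neq D$ of the $G$-horospherical space $P/J$ satisfies $\langle\sigma,\beta^\vee\rangle\le 0$. You give no argument for this, and a classification of rank-one affine spherical $M$-varieties cannot supply one. For $\beta\notin S_M$ the sign of $\langle\sigma,\beta^\vee\rangle$ is invisible on the fiber; it depends on how $P/J$ sits over $P/Q$. The paper splits into two cases:
\begin{itemize}
\item If $D=D_\alpha$ is a color and $\langle\sigma,\rho(D_\beta)\rangle>0$, then $(N(P/J),\{D_\beta\})$ is another $P$-equivariant colored subspace. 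It gives a parabolic $Q'\supseteq J$ whose Levi subgroup has simple roots $S^p(P/J)\cup\{\beta\}$. This set does not contain $\alpha$, contradicting the uniqueness of $Q$.
\item If $D$ is $G$-stable, moving the zero section by an element of $P^u$ shows that the line bundle is globally generated. So its divisor pulls back to $\sum_\beta c_\beta D_\beta$ with $c_\beta\ge 0$, and one gets $\langle\sigma,\rho(D_\beta)\rangle=-c_\beta\le 0$.
\end{itemize}
Part (\ref{lemma:rank1:alpha}) then follows from $\rho(D_\beta)=\beta^\vee|_{\Xi(P/J)}$ and the vanishing of $\beta^\vee$ on $\Xi(P/J)$ for $\beta\in S^p(P/J)$, not from a Weyl group argument. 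Part (\ref{lemma:rank1:M}) additionally requires $S^p(P/J)=S^p(P/H)$, which the paper derives from $\Colemb_Z=\varnothing$ and \cite[Theorem~1.1]{GH15}.
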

\begin{proof}
Since $\Cone_Z$ is contained in $V_P(X)$, the closure $\overline Z$ is $P$-stable by~\cite[Proposition~6.3]{Pe17}. By~\cite[Proposition~6.6]{Pe17} we have $\Xi\left(\overline Z\right)=\ZZ\sigma$, so $\overline Z$ has rank $1$, and $V_P\left(\overline Z\right)$ is obtained intersecting $V_P(P/H)$ with the line generated by $\sigma$. Since $\sigma$ is in $\Sigma_P(P/H)$ the cone $V_P(P/H)$ is the half-line of $N\left(\overline Z\right)$ defined by $\langle \sigma,-\rangle\leq 0$. It follows that $\overline Z$ is not $P$-horospherical.

Consider the valuation cones $V_P\left(\overline Z\right)\subseteq V_G\left(\overline Z\right)$. Since $N\left(\overline Z\right)$ has dimension $1$, there are only two possibilities for $V_G\left(\overline Z\right)$, namely $V_P\left(\overline Z\right)$ and $N\left(\overline Z\right)$. In the first case $\Cone_Z$ is contained not only in a face of codimension $1$ of $V_P(P/H)$ but also of $V_G(P/H)$, which yields $\sigma\in\Sigma_G(P/H)$: contradiction. It follows $V_G\left(\overline Z\right)=N\left(\overline Z\right)$ and $\overline Z$ is $G$-horospherical.

It also follows that the stabilizer $G_z$ (and so also $J$) contains a maximal unipotent subgroup of $G$. The stabilizer $J$ cannot contain $P^u$, because $J$ cannot contain a maximal unipotent subgroup of $P$ by~\cite[Proposition~6.14]{Pe17}, since $\overline Z$ is not $P$-horospherical. Being a normal subgroup of $P$ and not contained in $J$, the unipotent radical $P^u$ does not act trivially on $\overline Z$. This concludes the proof of~(\ref{lemma:rank1:Y}).

Consider the natural morphism $\pi\colon P/J\to P/Q$ and the corresponding colored subspace $(\Cone_\pi,\Colemb_\pi)$. Thanks to~\cite[Theorem~7.6]{Pe17}, this subspace is $P$-equivariant. 

Since $P/J$ has rank $1$, its image $P/Q$ has rank $1$ or $0$. If $P/Q$ has rank $1$, then the map $\pi$ has finite fibers. But the fiber $\pi^{-1}(eQ)$ is the quotient $Q/J=P^u/(J\cap P^u)$, which is connected and not equal to a single point: contradiction. Therefore $P/Q$ has rank $0$, i.e.\ it is a complete $P$-homogeneous space (and also a complete $G$-homogeneous space). This means that $Q=JP^u$ is a parabolic subgroup of $P$ containing $J$, evidently the unique one that is minimal with these properties.  This concludes the proof of~(\ref{lemma:rank1:Q}).

We compute the $B$-stable and $G$-stable prime divisors of $P/J$, which is a $G$-simple (thanks to Lemma~\ref{lemma:uniqueLorbits}) embedding of its open $G$-orbit $Z$. For this, since $Z$ is $G$-horospherical we may assume that $z\in Z$ is chosen so that $G_z$ contains $B_-^u$. This implies the inclusions
\[
Q\supseteq J \supseteq M\cdot (G\cap Q^u)\supseteq B_-.
\]

We apply~\cite[Lemma~4.5]{Pe17} to $J$, $Q$, and $\pi$, obtaining that the fiber $U=\pi^{-1}(eQ)$ is a horospherical $M$-variety of rank $1$, with $\Xi(Z)=\Xi(U)=\ZZ\sigma$. To define the latter lattice we are taking the Borel subgroup $B\cap M$ of $M$.

Let $D\subseteq P/J$ be a $G$-stable prime divisor. Then its image $\pi(D)$ is the $G$-homogeneous space $P/Q$, so $D$ intersects each fiber of $\pi$ in a divisor of the fiber. In particular, the intersection $D\cap U$ is an $M$-stable divisor. A similar reasoning applies if $D$ is any $B$-stable prime divisor of $P/J$ mapped dominantly to $P/Q$, since the point $eQ\in P/Q$ is in the open $B$-orbit of $P/Q$.

The exponential map $\fp^u\to P^u$ restricts to an $M$-equivariant isomorphism $\fj\cap \fp^u\to J\cap P^u$. Then $U\cong P^u/(J\cap P^u)\cong \fp^u/(\fj\cap \fp^u)$ is a spherical module for $M$. Moreover $U$ has rank $1$, which implies that it is an irreducible (horospherical) $M$-module.

Let $f$ be a highest weight vector in the dual module $U^*$, then the zero set of $f$ on $U$ is the unique $B\cap M$-stable prime divisor of $U$. It is either $M$-stable or a color of $U$. Correspondingly, either $P/J$ has a unique $G$-stable prime divisor $D$ and no color mapped dominantly to $P/Q$, or $P/J$ has no $G$-stable prime divisors and exactly one color $D$ mapped dominantly to $P/Q$.

Since $P/J$ has no spherical roots for the action of $G$, there is a bijection
\[
\begin{matrix}
S\smallsetminus S^p(P/J) & \to & \Delta(P/J)\\
\beta & \mapsto & D_\beta
\end{matrix}
\]
such that $\rho(D_\beta)=\beta^\vee|_{\Xi(P/J)}$ for all $\beta$, and such that the stabilizer of $D_\beta$ in $G$ is the minimal parabolic subgroup of $G$ strictly containing $B$ and corresponding to the simple root $\beta$. For these facts see~\cite[Section~1.4]{Lu01}. If $D$ is a color, let $\alpha$ be the simple root such that $D_{\alpha}=D$ and thus $\rho(D)=\alpha^\vee|_{\Xi(P/J)}$.

On the other hand, all colors of $P/J$, except for $D$ if the latter is a color, are not in $\Colemb_\pi$, and they are mapped by $\pi$ to the Schubert prime divisors of $P/Q$, which is a partial flag variety for the reductive group $G$ (see~\cite[Proposition~3.3.3]{Lu01}).

This implies that the set of simple roots of $M$ is
\[
S^p(P/J) \cup\{\alpha\}
\]
if $D$ is a color, or
\[
S^p(P/J)
\]
if $D$ is $G$-stable.

The image $P/Q$ has rank $0$, hence $\Cone_\pi=N(P/J)$. The definition of $P$-equivariant colored subspaces implies that $N(P/J)$ is generated as a convex cone by a subset of $V_P(P/J)$ and by $\rho(D)$.

This yields
\[
\<\sigma,\rho(D)\> >0
\]
completing the proof of~(\ref{lemma:rank1:Z}).  If $D$ is a color this yields also
\[
\<\sigma,\alpha^\vee\> >0.
\]

Let $D_\beta$ be a color of $P/J$, and if $D$ is a color we assume in addition $D_\beta\neq D$, i.e.\ $\beta\neq\alpha$. We claim that we have $\<\sigma,\rho(D_\beta)\>\leq 0$, equivalently $\<\sigma,\beta^\vee\> \leq 0$.

For sake of contradiction, suppose $\<\sigma,\rho(D_\beta)\> > 0$ and set $\Cone=N(P/J)$, $\Colemb=\{D_\beta\}$. The couple $(\Cone,\Colemb)$ is $P$-equivariant, hence it corresponds to a subgroup $Q'\subseteq P$ containing $J$, by~\cite[Theorem~7.6]{Pe17}.

Similarly as above, the Schubert prime divisors of $P/Q'$ correspond to the set of simple roots
\[
S\smallsetminus\left(S^p(P/J)\cup\{\beta\}\right)
\]
and the complementary $S^p(P/J)\cup\{\beta\}$ is the set of simple roots of the Levi subgroup $M'$ of $Q'$ containing $T$.

If $D$ is a color, the set of simple roots of $M'$ does not contain the set of simple roots of $M$, thus contradicting the uniqueness of $Q$. The claim is proven in this case.

We suppose now that $D$ is not a color. In this situation the irreducible spherical module $U$ has a highest weight vector that is $G$-stable, therefore $U$ has dimension $1$, it intersects $D$ in the point $0\in U$, and the restriction $\pi|_D\colon D\to P/Q$ is a $G$-equivariant bijection between $G$-homogeneous varieties, so it is an isomorphism. In other words $P/J$ is the total space of a line bundle over $P/Q$ and $D$ is the image of the zero section $s=\left(\pi|_D\right)\inv \colon P/Q\to D\subseteq P/J$.

Let $p\in P$ be an element sending the origin $0\in U$ to a non-zero point of $U$, then $p\cdot s$ is a non-zero section of the line bundle, which implies that the latter is generated by global sections. Therefore it is represented by a divisor on the partial flag variety $P/Q$ that is a linear combination of Schubert divisors with non-negative coefficients. We can write the pull-back along $\pi$ as a divisor of the form
\[
\delta = \sum_{\beta\in S\smallsetminus S^p(P/J)} c_\beta D_\beta
\]
where $c_\beta\geq 0$ for all $\beta$. The image $D$ of the zero section is linearly equivalent to $\delta$ in the total space $P/J$ of the line bundle: it is not difficult to show that any rational function $f\in\CC(P/J)$ expressing this linear equivalence (say, having a simple zero on $D$ and poles of order $c_\beta$ on each $D_\beta$) must be a $B$-eigenvector. The $B$-eigenvalue of $f$ must be $\sigma$, because $\sigma$ takes a positive value on $\rho(D)$ and generates $\Xi(P/J)$. Hence
\[
\<\sigma,\rho(D)\> = 1
\]
and
\[
\<\sigma,\rho(D_\beta)\> =-c_\beta \leq 0
\]
for all $\beta$. This finishes the proof of the claim. 

To complete the proof of~(\ref{lemma:rank1:alpha}), it remains to check the values on $\sigma$ of the simple coroots $\beta^\vee$ where $\beta$ is in $S^p(P/J)$. But these simple coroots vanish on $\Xi(P/J)$, see again~\cite[Section~1.4]{Lu01}.

Finally~\cite[Proposition~6.3]{Pe17} implies $\Colemb_Z=\varnothing$, and at this point Theorem~1.1 in the paper~\cite{GH15} by Gagliardi and Hofscheier implies $S^p(P/J)=S^p(P/H)$. This concludes the proof of~(\ref{lemma:rank1:M}).
\end{proof}

\begin{theorem}\label{thm:NSprecise}
Let $R$ be a connected subgroup of $P$ containing $G$, and let $\sigma\in\Sigma_P(P/H)\smallsetminus\Sigma_R(P/H)$. Then $\sigma\notin\Sigma_G(P/H)$, and the element $w_{0,\sigma}(-\sigma)$ is the highest weight of an irreducible $G$-submodule $\fv$ of $\fp^u$ not contained in $\fr^u$ and such that $V=\Exp(\fv)$ generates a subgroup which does not act trivially on $P/H$.
\end{theorem}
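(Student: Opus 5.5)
First I show $\sigma\notin\Sigma_G(P/H)$. Since $G\subseteq R\subseteq P$, we have $V_P(P/H)\subseteq V_R(P/H)\subseteq V_G(P/H)$: a $P$-invariant valuation is $R$-invariant, and so on, by Corollary~\ref{cor:Pvalcone} applied to $P$ and to $R$. If $\sigma$ were in $\Sigma_G(P/H)$, the hyperplane $\sigma^\perp$ would cut a facet of $V_G(P/H)$. Since $\sigma\in\Sigma_P(P/H)$, the same hyperplane also cuts a facet of $V_P(P/H)$, which lies inside the facet of $V_G$. The cone $V_R(P/H)$ is squeezed between the two, so $\sigma^\perp\cap V_R(P/H)$ would contain a codimension-one piece lying on the boundary hyperplane $\langle\sigma,-\rangle=0$, with $V_R\subseteq\{\langle\sigma,-\rangle\le 0\}$. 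This makes $\sigma$ a spherical root of $R$, contradicting $\sigma\notin\Sigma_R(P/H)$. Hence Lemma~\ref{lemma:rank1} applies to $\sigma$.

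Next I take a complete $P$-toroidal embedding $X$ and a $G$-orbit $Z$ as in Lemma~\ref{lemma:rank1}, with stabilizer $J=P_z$, $Q=JP^u\supseteq B_-$, and Levi subgroup $M$ with simple roots $S(\sigma)$. From the proof of that lemma, the fiber $U=\pi^{-1}(eQ)\cong\fp^u/(\fj\cap\fp^u)$ is an irreducible horospherical $M$-module of rank one with $\Xi(U)=\ZZ\sigma$. Its $B\cap M$-weight lattice is generated by the weight of a highest weight vector $f\in U^*$, and $f$ is a $B$-eigenfunction that is positive on $\rho(D)$. Hence the character of $f$ is $\sigma$ up to sign, and with the pairing conventions the lowest weight of $U$ is $-\sigma$. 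Since $U$ is a quotient of the completely reducible $M$-module $\fp^u$, it lifts to an $M$-submodule $U'\subseteq\fp^u$. I then take the $G$-submodule $\fv\subseteq\fp^u$ generated by $U'$, choosing an irreducible $G$-summand containing a copy of $U'$. Since $Q\supseteq B_-$, the $M$-module $U'$ sits at the "lowest" end of $\fv$, so the extremal weight $-\sigma$ (lowest for $M$) lies in the $W_M$-orbit of the lowest weight of $\fv$. Applying $w_{0,\sigma}$ converts the $M$-lowest weight $-\sigma$ into the $M$-highest weight $w_{0,\sigma}(-\sigma)$. Showing that this is the $G$-highest weight of $\fv$ is \textbf{the main obstacle}. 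I would try to check that $w_{0,\sigma}(-\sigma)$ is $G$-dominant. It is $M$-dominant, since $S(\sigma)$ is the root system of $M$. For simple roots $\beta\notin S(\sigma)$, I would use Lemma~\ref{lemma:rank1}(\ref{lemma:rank1:alpha}): there $\langle\sigma,\beta^\vee\rangle\le0$, and I would combine this with the facts that $\fv$ is generated by $U'$ and that $Q^u\cap G$ raises weights. If dominance fails, I would argue directly that the $G$-highest weight vector of $\fv$ lies in $U'$, using that $U'$ generates $\fv$ and $B_-$ stabilizes the lowest line.

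Non-triviality: $V=\Exp(\fv)$ does not act trivially on $P/H$, because $\fv\not\subseteq\fj$: its image in $U$ is nonzero. The point $z$ lies in the closure of $P/H$, so if the group generated by $V$ fixed $P/H$ pointwise, it would fix $z$, a contradiction. Finally, for $\fv\not\subseteq\fr^u$ I argue by contradiction. If $\fv\subseteq\fr^u$, I redo the whole construction with $R$ in place of $P$, using the $R$-orbit closure and the fact that $\Exp(\fv)$ moves the point $z$ inside $R$. Then $\sigma$ would arise from a non-horospherical rank-one $R$-orbit closure orthogonal to a facet of $V_R(P/H)$, which forces $\sigma\in\Sigma_R(P/H)$ via \cite[Proposition~6.6]{Pe17} applied to $R$, contradicting the hypothesis.
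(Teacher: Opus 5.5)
\emph{Verdict: there is a genuine gap.} The central step, identifying $w_{0,\sigma}(-\sigma)$ as a $G$-highest weight in $\fp^u$, is left unresolved, and the route you sketch for it would not work.

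\textbf{Your first step is fine.} It differs from the paper's. You squeeze $V_R(P/H)$ between $V_P(P/H)$ and $V_G(P/H)$ directly in $N(P/H)$, so that $\sigma\in\Sigma_G(P/H)$ would make $\sigma^\perp\cap V_R(P/H)$ a facet with outer normal $\sigma$, forcing $\sigma\in\Sigma_R(P/H)$. The paper argues instead in the line $N(\overline Z)$. This has the advantage of also showing that $\overline Z$ is $R$-horospherical, a fact it reuses at the end.

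\textbf{The gap is the step you flag yourself.} Nothing in your argument produces a $G$-highest weight vector of $\fp^u$ of weight $w_{0,\sigma}(-\sigma)$, and starting from a lift $U'\subseteq\fp^u$ cannot do so.
\begin{itemize}
\item Such lifts are determined only up to $\Hom_M(U',\fj\cap\fp^u)$. So $U'$ need not lie in a single irreducible $G$-summand, nor contain a $G$-highest weight vector.
\item A $G$-summand ``containing a copy of $U'$'' may lie entirely inside $\fj\cap\fp^u$. That would also ruin your non-triviality argument.
\item $G$-dominance of $w_{0,\sigma}(-\sigma)$ would not help, since a dominant weight of $\fv$ need not be its highest weight.
\item Your heuristic points the wrong way. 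In Example~\ref{ex:classificationsr} with $\sigma=-\mu-\delta$, one has $S(\sigma)=\varnothing$ and $M=T$, and $-\sigma=\mu+\delta$ is the highest weight of $\fv=\fp^u$, not its lowest.
\end{itemize}

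\textbf{The missing idea.} The kernel $\fj\cap\fp^u$ of $\fp^u\to U$ is a proper $B_-$-stable subspace, being $\mathrm{Ad}(J)$-stable with $J\supseteq B_-$. (Strictly, the base point should be the $M$-fixed origin of $U$ rather than a point of $Z$; its stabilizer contains $M\cdot(G\cap Q^u)\supseteq B_-$.) The argument then runs as follows.
\begin{itemize}
\item $\fp^u$ is generated as a $B_-$-module by its $B$-highest weight vectors. Hence some $G$-highest weight vector $v$ lies outside $\fj\cap\fp^u$.
\item The image of $v$ in $U$ is a nonzero $T$-weight vector killed by the positive root vectors of $M$. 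It is therefore an $M$-highest weight vector of the irreducible module $U$, so $v$ has weight $w_{0,\sigma}(-\sigma)$.
\item One takes $\fv$ to be the $G$-submodule generated by $v$.
\end{itemize}

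\textbf{The remaining parts.} With this choice, $\fv\not\subseteq\fj$ holds by construction, so your non-triviality argument goes through. Your contrapositive argument for $\fv\not\subseteq\fr^u$ is the paper's reasoning run backwards. It can be completed, but as written it is only a sketch. The direct version, using the paper's form of the first step, is shorter: since $\overline Z$ is $R$-horospherical, $R^u$ fixes $z$ by \cite[Proposition~6.14]{Pe17}. It therefore acts trivially on $\overline Z$, so $\fr^u\subseteq\fj\cap\fp^u$, which does not contain $v$.
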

\begin{proof}
We follow the strategy of~\cite[Theorem~6.15]{Pe17}. As in Lemma~\ref{lemma:rank1}, we fix a complete $P$-toroidal $P$-embedding $X$ of $P/H$ and we choose a $G$-orbit $Z$ such that $\Cone_Z$ is contained in $V_P(X)$, has codimension $1$ in $N(X)$ and is orthogonal to $\sigma$.

Consider the valuations cones $V_P\left(\overline Z\right)\subseteq V_R\left(\overline Z\right)\subseteq V_G\left(\overline Z\right)$. The inclusions stem from Corollary~\ref{cor:Pvalcone}. Since $N\left(\overline Z\right)$ has dimension $1$, there are only two possibilities for $V_R\left(\overline Z\right)$, namely $V_P\left(\overline Z\right)$ and $N\left(\overline Z\right)$. In the first case $\Cone_Z$ is contained not only in a face of codimension $1$ of $V_P(X)$ but also of $V_R(X)$, which yields $\sigma\in\Sigma_R(P/H)$: contradiction.

Hence we have $V_R\left(\overline Z\right)=N\left(\overline Z\right)$, in other words $\overline Z$ is $R$-horospherical. The inclusion of $V_R(P/H)$ in $V_G(P/H)$ shows at this point $V_G\left(\overline Z\right)=N\left(\overline Z\right)$, so $\sigma\notin\Sigma_G(P/H)$.

This enables us to apply Lemma~\ref{lemma:rank1}, so let $z$, $J$, and $Q$ be as in that statement. We may suppose again that $G_z$, hence also $J$, contains $B_-$. Since $\overline Z$ is $R$-horospherical, by~\cite[Proposition~6.14]{Pe17} the stabilizer $R_z$ contains $B_-R^u$.

Recall the Levi subgroup $M\subseteq Q$ and the fiber $U=\pi^{-1}(eQ)$ from the proof of Lemma~\ref{lemma:rank1}. We have seen that the lattice $\Xi(Z)$ is generated by $\sigma$, and that it coincides with the lattice $\Xi(U)$ which is generated by the highest weight $\gamma$ of the dual module $U^*$. Hence $\gamma\in\{\sigma,-\sigma\}$.

The unique $(B\cap M)$-stable prime divisor $E$ of $U$ has a valuation that takes a positive value on $\gamma$, and $E$ is obtained intersecting $U$ with the prime divisor $D$ of Lemma~\ref{lemma:rank1} (see the proof of the lemma). Part~(\ref{lemma:rank1:Z}) of the lemma assures at this point $\sigma=\gamma$. So $-\sigma$ is the lowest weight of $U$ and $w_{0,\sigma}(-\sigma)$ is the highest weight.

Finally, consider the intersection $J\cap P^u$: it contains $R^u$, and the key observation is that $J\cap P^u$ is stable under conjugation by $J$ which contains $B_-$. Then $\fj\cap \fp^u$ is a $B_-$-stable proper vector subspace, so it cannot contain all highest weight vectors of $\fp^u$ as a $G$-module. Take any such highest weight vector $v\in\fp^u$ not contained in $\fj\cap \fp^u$, then modulo $\fj\cap \fp^u$ the vector $v$ is a highest weight vector of $\fp^u/(\fj\cap \fp^u)$ as an $M$-module. Therefore $v$ has weight $w_{0,\sigma}(-\sigma)$.

Let $\fv\subseteq \fp^u$ be the irreducible $G$-submodule generated by the highest weight vector $v$. Since $v$ is not contained in $\fj\cap \fp^u$, the image $V=\Exp(\fv)$ generates a subgroup of $P$ not stabilizing the point $z\in \overline Z$. So this group does not act trivially on $\overline Z$, which implies that it is not contained in $R^u$ and correspondingly $\fv$ is not contained in $\fr^u$.
\end{proof}

We apply the above theorem to give informations about the valuation cone of a spherical $G$-variety, knowing a generic stabilizer.

\begin{corollary}
Let $G/K$ be a spherical homogeneous space. Suppose $K$ is contained in a maximal proper parabolic subgroup $Q$ containing $B_-$ corresponding to a simple root $\alpha\in S$. Suppose the Lie algebra $\fq^u$ is an irreducible module under the adjoint action of a Levi subgroup of $Q$. Then $K$ does not contain $Q^u$ if and only if there exists $\sigma\in\Sigma_G(G/K)$ that is not in $\Span_\ZZ S\smallsetminus \{\alpha\}$ and satisfies
\[
w_{0,\sigma}(\sigma)=\alpha.
\]
\end{corollary}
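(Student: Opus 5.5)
The plan is to reduce the statement to Theorem~\ref{thm:NSprecise} by means of Proposition~\ref{prop:fiberoverflag}. Let $M\subseteq Q$ be the Levi subgroup containing $T$, whose set of simple roots is $S\smallsetminus\{\alpha\}$. The natural map $\varphi\colon G/K\to G/Q$ is $G$-equivariant with fiber $Q/K$. Proposition~\ref{prop:fiberoverflag} gives
\[
\Sigma_G(G/K)=\Sigma_Q(Q/K).
\]
So it suffices to show that $K\not\supseteq Q^u$ if and only if some $\sigma\in\Sigma_Q(Q/K)$ lies outside $\Span_\ZZ (S\smallsetminus\{\alpha\})$ and satisfies $w_{0,\sigma}(\sigma)=\alpha$.

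First I show that $\sigma\notin\Span_\ZZ(S\smallsetminus\{\alpha\})$ is the same as $\sigma\notin\Sigma_M(Q/K)$, for $\sigma\in\Sigma_Q(Q/K)$. Spherical roots of the reductive group $M$ are non-negative combinations of simple roots of $M$, so elements outside the span are not in $\Sigma_M(Q/K)$. Conversely, if $\sigma\notin\Sigma_M(Q/K)$ satisfies $w_{0,\sigma}(\sigma)=\alpha$, then $\sigma$ cannot lie in the span: $w_{0,\sigma}$ lies in the Weyl group of $M$, which preserves $\Span_\ZZ(S\smallsetminus\{\alpha\})$, and $\alpha$ is not in that span.

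For the direction ``$\Leftarrow$'', take $\sigma$ as in the statement, so $\sigma\in\Sigma_Q(Q/K)\smallsetminus\Sigma_M(Q/K)$. Apply Theorem~\ref{thm:NSprecise} with $P=Q$, Levi subgroup $M$, $H=K$ and $R=M$ (so $\fr^u=0$). It yields an irreducible $M$-submodule $\fv\subseteq\fq^u$ such that $\Exp(\fv)$ generates a subgroup acting non-trivially on $Q/K$. This subgroup is contained in $Q^u$, so $Q^u$ acts non-trivially on $Q/K$. Since $Q^u$ is normal in $Q$, acting trivially on $Q/K$ is equivalent to $Q^u\subseteq K$; hence $K\not\supseteq Q^u$.

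For the direction ``$\Rightarrow$'', assume $K\not\supseteq Q^u$. I first need some $\sigma\in\Sigma_Q(Q/K)\smallsetminus\Sigma_M(Q/K)$. If instead $\Sigma_Q(Q/K)=\Sigma_M(Q/K)$, then by the last corollary before Theorem~\ref{thm:classification} every $Q$-orbit of any $Q$-embedding is a single $M$-orbit. In particular $Q=MK$, so $\fq=\fm+\fk$. I then want to deduce $\fq^u\subseteq\fk$. The tools are that $\fk\cap\fq^u$ is stable under $M\cap K$, together with the irreducibility of $\fq^u$ and the structure of $Q/K\cong M/(M\cap K)$ as a $Q$-variety. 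I expect this step to be the main obstacle. If the Lie algebra argument does not close, I would instead look in~\cite{Pe17} for a converse of Theorem~\ref{thm:NSprecise}: that $\Sigma_Q=\Sigma_M$ forces $Q^u$ to act trivially, in the spirit of~\cite[Proposition~6.14]{Pe17}.

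Given such a $\sigma$, Theorem~\ref{thm:NSprecise} says that $w_{0,\sigma}(-\sigma)$ is the highest weight of an irreducible $M$-submodule of $\fq^u$. As $\fq^u$ is irreducible, this submodule is all of $\fq^u$. Since $Q\supseteq B_-$, $\fq^u$ is spanned by the root spaces of negative roots having positive $\alpha$-coefficient, and its $B$-highest weight is $-\alpha$. Hence $w_{0,\sigma}(-\sigma)=-\alpha$, that is $w_{0,\sigma}(\sigma)=\alpha$. Finally, $\sigma\notin\Sigma_M(Q/K)$ and $\sigma\in\Sigma_Q(Q/K)=\Sigma_G(G/K)$, so $\sigma\notin\Span_\ZZ(S\smallsetminus\{\alpha\})$ by the first step.
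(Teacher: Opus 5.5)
\textbf{Verdict.} The proposal has a genuine gap: the existence step in the ``$\Rightarrow$'' direction is missing, and the route you suggest for it cannot work. The rest is sound. Your ``$\Leftarrow$'' direction is correct and differs from the paper's. The paper uses the easy half of Luna's parabolic induction: if $Q^u\subseteq K$, then $G/K=G\times^Q Q/K$ is parabolically induced, so $\Sigma_G(G/K)\subseteq\Span_\ZZ S\smallsetminus\{\alpha\}$. You instead apply Theorem~\ref{thm:NSprecise} with $P=Q$, $R=M$, which works and stays inside the paper's framework. The end of your ``$\Rightarrow$'' direction, where irreducibility of $\fq^u$ forces $w_{0,\sigma}(-\sigma)=-\alpha$, is exactly the paper's argument.

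\textbf{The gap.} It is the step you flag: getting some $\sigma\in\Sigma_Q(Q/K)\smallsetminus\Sigma_M(Q/K)$ from $Q^u\not\subseteq K$. You want the chain $\Sigma_Q(Q/K)=\Sigma_M(Q/K)\Rightarrow Q=MK\Rightarrow Q^u\subseteq K$. The second implication is false, even under all hypotheses of the corollary.

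Example~\ref{ex:rotto} already shows that $P=GH$ with $P^u\not\subseteq H$ can occur, and this can be placed inside a parabolic subgroup:
\begin{enumerate}
\item Take $G=\SL(4)$ and $Q$ the stabilizer of a line, so $M\cong\GL(3)$ and $Q^u\cong(\CC^3)^*$ up to a character.
\item Let $\bar K\cong\SL(2)\ltimes\CC^2$ be the stabilizer in $\SL(3)\subseteq M$ of a non-zero linear form. It fixes a line $\ell\subseteq Q^u$ and acts on $Q^u/\ell\cong(\CC^2)^*\cong\CC^2$ through $\SL(2)$.
\item Hence the projection $\bar K\to\CC^2$ is a $1$-cocycle with values in $Q^u/\ell$. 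Let $K\subseteq\bar K\ltimes Q^u$ be the preimage of its graph.
\item Then $K\cap Q^u=\ell$ and $K\cap M\cong\SL(2)$. Also $K$ acts transitively on $Q/M\cong Q^u$, so $MK=Q$.
\item Moreover $Q/K\cong\GL(3)/\SL(2)$ is $M$-spherical, so $G/K$ is spherical, yet $Q^u\not\subseteq K$.
\end{enumerate}

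So knowing that $Q/K$ is a single $M$-orbit is strictly weaker than knowing $\Sigma_Q(Q/K)=\Sigma_M(Q/K)$; the latter fails in this example, by the corollary itself. No argument starting from $\fq=\mathfrak m+\mathfrak k$ can therefore give $\fq^u\subseteq\mathfrak k$.

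Your fallback of looking for a general converse to Theorem~\ref{thm:NSprecise} in \cite{Pe17} also fails. Right after the hermitian-symmetric example, the paper notes, via Example~\ref{ex:rotto}, that $P^u\not\subseteq H$ does not force $\Sigma_P(P/H)\neq\Sigma_G(P/H)$ in general.

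\textbf{The missing ingredient.} It is Luna's characterization of parabolic induction \cite[Section~3.4]{Lu01}, which the paper uses. For $K\subseteq Q$, one has $Q^u\subseteq K$ if and only if every element of $\Sigma_G(G/K)$ lies in the root lattice $\Span_\ZZ S\smallsetminus\{\alpha\}$ of $M$. With this, $Q^u\not\subseteq K$ directly gives some $\sigma\in\Sigma_G(G/K)=\Sigma_Q(Q/K)$ outside that lattice, hence $\sigma\notin\Sigma_M(Q/K)$. Your remaining steps then finish the proof exactly as in the paper.
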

\begin{proof}
Let $M$ be the Levi subgroup of $Q$ containing $T$. First observe that $\alpha$ is not in $S^p(P/H)$ by~\cite[Proposition~3.3.3, part~2)]{Lu01}, so the set of simple roots of $M$ contains $S^p(P/H)$.

The subgroup $K$ contains $Q^u$ precisely when $G/K$ is obtained by a procedure called {\em parabolic induction} from the fiber $Q/K$ of the natural map $G/K\to G/Q$. This is equivalent to the fact that all elements of $\Sigma_G(G/K)$ are in the root lattice of $M$, see~\cite[Section~3.4]{Lu01}. This lattice is $\Span_\ZZ S\smallsetminus \{\alpha\}$.

So if $K$ contains $Q^u$ then all elements of $\Sigma_G(G/K)$ are contained in $\Span_\ZZ S\smallsetminus \{\alpha\}$, and no element $\sigma$ as in the statement of the corollary exists.

Viceversa, if $K$ does not contain $Q^u$ then there is an element $\sigma\in \Sigma_G(G/K)$ that is not in the root lattice of $M$. By Proposition~\ref{prop:fiberoverflag} in this case we have $\sigma\in \Sigma_Q(Q/K)$, and not being in the root lattice of $M$ implies that $\sigma$ is not in $\Sigma_M(Q/K)$. Theorem~\ref{thm:NSprecise} implies that $w_{0,\sigma}(-\sigma)$ is a highest weight appearing in $\fq^u$ as an $M$-module. Under our hypotheses on $Q$, there is only one possibility namely $w_{0,\sigma}(-\sigma)=-\alpha$.
\end{proof}

\begin{example}
The above corollary can be applied to hermitian symmetric varieties. For example, for $\SO(2n)/\GL(n)$ the simple root $\alpha$ is $\alpha_n$ (the short simple root, with the usual Bourbaki numbering), which corresponds to the spherical root $\sigma=\alpha_{n-2}+\alpha_{n+1}+\alpha_n$ if $n$ is odd, and $\sigma=\alpha_n$ if $n$ is even (see~\cite[Section~3, Case~16]{BP15}.
\end{example}

It is a natural question whether one can generalize the above theorem to elements of $\Sigma_G(P/H)$ under suitable assumptions, or prove a form of converse to the theorem. These seem much more delicate issues and here we do not pursue these lines further.

To illustrate the difficulties that arise, we mention that if $P^u$ acts non-trivially on $P/H$ (equivalently $P^u\not\subseteq H$) then it is not true that $\Sigma_P(P/H)$ and $\Sigma_G(P/H)$ must be different, see the next example.

\begin{example}\label{ex:rotto}
Set $G=\SL(n+1)$ and let it act by conjugation on the vector group $P^u=\CC^{n+1}$ with the standard linear action. Consider the resulting semi-direct product $P=G\ltimes P^u$ and its subgroup
\[
H =\left\{ \left(
\begin{pmatrix}
1 & 0 \\
v & *
\end{pmatrix}
,
\begin{pmatrix}
* \\
v
\end{pmatrix}\right)\in G\ltimes P^u \;\middle|\; v\in \CC^{n} \right\}.
\]
Then the unipotent radical $P^u$ is not contained in the subgroup $H$, however $P/H$ is a single $G$-orbit because we have
\[
PH=G\underbrace{P^uH}_{\subseteq GH}= GH.
\]
The variety $P/H= G/(G\cap H)=\SL(n+1)/\SL(n)$ is spherical, and it is possible to prove the equality
\[
\Sigma_G(P/H)=\Sigma_P(P/H)=\{\alpha_1+\ldots+\alpha_n\}
\]
using the same technique of the proof of~\cite[Theorem~13.1]{Pe17}, where $\alpha_1,\ldots,\alpha_n$ are the simple roots of $G$.

The same $G$-variety $\SL(n+1)/\SL(n)$ can be equipped obviously with a different transitive action of $P$ by letting $P^u$ act trivially instead. The two resulting $P$-varieties are not $P$-equivariantly isomorphic, nevertheless they have the same spherical roots for the $P$-action.
\end{example}

\begin{remark}
In the standard theory for reductive groups there is a classification of all possible spherical roots for the action of a given group $G$ on any spherical $G$-variety. We expect that this can be achieved in our setting too if one fixes the group $P$, using Theorem~\ref{thm:NSprecise}, since the elements of $\Sigma_P(P/H)\smallsetminus \Sigma_G(P/H)$ are so closely related to $P^u$.

On the other hand, if only the group $G$ is given and $P$ is allowed to vary, then there are infinitely many possibilities for elements of $\Sigma_P(P/H)\smallsetminus \Sigma_G(P/H)$, see the following examples. The classification of such elements, if possible, would probably involve in a crucial way a description of the branching of representations from $G$ to its reductive subgroups $K$ such that $G/K$ is spherical.
\end{remark}

\begin{example}\label{ex:classificationsr}
Let $G=\SL(2)\times \SL(2)\times \CC^*$ and consider the subgroup $L=\diag(\SL(2))\times\CC^*$. Choose the Borel subgroup $B=B_1\times B_2\times \CC^*$ of $G$, where $B_1=B_2$ is a Borel subgroup of $\SL(2)$, denote by $\delta\colon B\to \CC^*$ the projection onto the third factor, and denote by $\omega_1$ and $\omega_2$ the fundamental dominant weights of the two factors $\SL(2)$ of $G$, extended naturally to characters of $B$. Denote by $\alpha_i=2\omega_i$ the corresponding simple roots.

Let $\CC^2$ be the $L$-module where $\SL(2)$ acts in the usual way and $s\in\CC^*$ acts as the homothety of factor $s$. The equivariant vector bundle
\[
X=G\times^L \CC^2
\]
is a spherical $G$-variety, with spherical roots $\Sigma_G(X)=\{\alpha_1,\alpha_2\}$. This stems from the fact that a generic stabilizer in $G$ of $X$ corresponds to case~8 of Table~A in~\cite{Wa96}.

Suppose now $V=V(\lambda)$ is an irreducible $G$-module with highest weight $\lambda$ such that $\CC^2$ is an $L$-submodule of $V$. Using the sphericity of $X$ one can show in general that the multiplicity of $\CC^2$ in $V$ must be $1$; however in this case it is an easy consequence of the Clebsch-Gordan formula. Then we have a unique $L$-equivariant projection $\pi\colon V\to \CC^2$.

Set $P=G\ltimes P^u$ where $P^u$ is the vector group that, equipped with the action of $G$ by conjugation, is $G$-equivariantly isomorphic to $V$. To avoid confusion we fix a $G$-equivariant isomorphism $\ell\colon P^u\to V$, we will use the notation $g*u=gug\inv$ for the conjugation action of $g\in G$ on $u\in P^u$, and the juxtaposition $gv$ if we mean the action of $g\in G$ on $v\in V$.

The action of $G$ on $X$ can be extended to $P$ by setting
\[
gu\cdot [x,v] = \left[g\cdot x,v+\pi\left(\ell\left(x\inv* u\right)\right)\right]
\]
where $u\in P^u$, $g,x\in G$, and $v\in \CC^2$. In this way $P^u$ acts non-trivially and $P$ acts transitively on $X$.

Notice that $X$ is not a single $G$-orbit: the image $Z$ of the zero section of the vector bundle is a $G$-stable but not $P$-stable closed subvariety. This implies that not all $G$-invariant valuations of $\CC(X)$ are $P$-invariant, hence $\Sigma_G(X)\neq \Sigma_P(X)$. Since the strict inclusion $\Sigma_G(X)\supsetneq \Sigma_P(X)$ is impossible because the inclusion $V_G(X)\supseteq V_P(X)$ holds, there exists $\sigma\in\Sigma_P(X)\smallsetminus \Sigma_G(X)$.

Theorem~\ref{thm:NSprecise} yields $\lambda=w_{0,\sigma}(-\sigma)$. The possibilities for $\lambda$ are $\lambda=\mu+\delta$ with
\[
\mu=n\omega_1+(n-1)\omega_2
\]
or
\[
\mu=(n-1)\omega_1+n\omega_2
\]
where $n$ is any positive integer.

It remains to compute $w_{0,\sigma}$. We have $S^p(P/H)=\varnothing$ from Table~A in~\cite{Wa96}, but $w_{0,\sigma}$ also depends on which simple coroot is strictly positive on $\sigma$. From the easy structure of the Weyl group of $G$ it is elementary to conclude
\[
\sigma = \pm\mu-\delta
\]
regardless of the value of $n$.

We will see in Example~\ref{ex:noncosimplicial} that representation theory yields $\sigma=-\mu-\delta$. The other possibility $\mu-\delta$ is not excluded easily using embeddings, because it is contained in the convex cone generated by $\Sigma_P(X)$.

We can apply these arguments to any $\lambda$ such that $V$ contains $\CC^2$, obtaining various spherical roots $\sigma$ for various different groups $P$:
\[
\sigma=-n\omega_1-(n-1)\omega_2-\delta
\]
and
\[
\sigma=-(n-1)\omega_1-n\omega_2-\delta
\]
can be spherical roots for any positive integer $n$.

It is interesting to notice the unusual shape of these spherical roots, in contrast with the spherical roots of the $G$-action which are always dominant weights for this particular $G$.
\end{example}

\begin{example}\label{ex:toric}
Let $G=(\CC^*)^n$ be a torus. Set $P^u=\CC$, and let $G$ act by conjugation on $P^u$ as scalar multiplication via a non-trivial character $\lambda\colon G\to \CC^*$. Let $X$ be a toric $G$-variety, in which case $V_G(X)=N(X)$ and $\Sigma_G(X)=\varnothing$ because $G$ has no non-trivial unipotent subgroups. Suppose the action of $G$ extends to $P=G\ltimes P^u$, in which case $\lambda$ is called a {\em Demazure root} of $X$. Then it is a standard fact that $X$ is not a single $G$-orbit, so the difference $\Sigma_P(X)\smallsetminus \Sigma_G(X)$ is not empty and Theorem~\ref{thm:NSprecise} implies
\[
\Sigma_P(X)=\{-\lambda\}.
\]
This was obtained already in~\cite[Example~9.1]{Pe17} using a different method.
\end{example}

\section{Morphisms, automorphisms, subvarieties}\label{s:morphisms}

In~\cite{Kn91} equivariant regular maps between spherical $G$-embeddings are discussed.

\begin{definition}[\cite{Kn91}]
Let $G/M$, $G/N$ be spherical $G$-homogeneous varieties with $M\subseteq N$ and let $\varphi\colon G/M\to G/N$ be the natural map. Pulling back to $G/M$ the $B$-eigenvectors in $\CC(G/N)$ induces a homomorphism $\varphi_*\colon N(G/M)\to N(G/N)$. Denote by $\Colemb_\varphi$ the set of $G$-colors of $G/M$ mapped dominantly to $G/N$.
\begin{enumerate}
\item A colored cone $(\Cone,\Colemb)$ in $N(G/M)$ {\em maps} to a colored cone $(\Cone',\Colemb')$ of $N(G/N)$ if we have
\begin{enumerate}
\item $\varphi_*(\Cone)\subseteq \Cone'$, and
\item $\varphi(\Colemb\smallsetminus\Colemb_\varphi)\subseteq \Colemb'$.
\end{enumerate}
\item A family $\Fan$ of colored coned in $N(G/M)$ {\em maps} to a family $\Fan'$ of colored cones in $N(G/N)$ if each element of $\Fan$ maps to some element of $\Fan'$.
\end{enumerate}
\end{definition}

Theorem~4.1 in~\cite[Theorem~4.1]{Kn91} states that the map $\varphi$ of the above definition extends to a $G$-equivariant map $X\to X'$ between respective $G$-embeddings if and only if $\Fan(X)$ maps to $\Fan(X')$. In this case, if $Z\subseteq X$ is a $G$-orbit then the colored cone $(\Cone_Z,\Colemb_Z)$ maps to $\left(\Cone_{\varphi(Z)},\Colemb_{\varphi(Z)}\right)$ (this follows actually from the same theorem, applied to simple $G$-embeddings).

Let us work on the same problem, but with $P$-embeddings. Let $K\subseteq P$ be a subgroup containing $H$, we want to determine when the induced natural map $\varphi\colon P/H\to P/K$ extends to respective $P$-embeddings $X$ and $X'$.

We apply Knop's theorem to the respective open $G$-orbits of $X$ and $X'$, and we obtain that $\varphi$ extends to a $G$-equivariant map $X\to X'$ if and only if $\Fan(X)$ maps to $\Fan(X')$. Such extension is automatically $P$-equivariant, because $\varphi$ itself is $P$-equivariant, so the situation is already well understood.

Our contribution is that actually it is enough to test $\Fan(X)^\uparrow$ and $\Fan(X')^\uparrow$ instead of $\Fan(X)$ and $\Fan(X')$.

\begin{theorem}
Let $K\subseteq P$ be a subgroup containing $H$ and let $\varphi\colon P/H\to P/K$ the corresponding natural map. Let $X$ and $X'$ be $P$-embeddings respectively of $P/H$ and $P/K$. Then $\varphi$ extends to a $P$-equivariant regular map $X\to X'$ if and only if $\Fan(X)^\uparrow$ maps to $\Fan(X')^\uparrow$.
\end{theorem}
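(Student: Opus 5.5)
We prove both implications; the "only if" direction is formal and the "if" direction reduces to Knop's theorem via $P$-orbits.

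\emph{Necessity.} Suppose $\varphi$ extends to a $P$-equivariant map $\tilde\varphi\colon X\to X'$. Then $\tilde\varphi$ is in particular $G$-equivariant. By~\cite[Theorem~4.1]{Kn91}, for every $G$-orbit $Z\subseteq X$ the colored cone $(\Cone_Z,\Colemb_Z)$ maps to $(\Cone_{\tilde\varphi(Z)},\Colemb_{\tilde\varphi(Z)})$. Now take $(\Cone_Z,\Colemb_Z)\in\Fan(X)^\uparrow$. By Proposition~\ref{prop:PequivariantLorbit}, $\overline Z$ is $P$-stable. Then $\overline{\tilde\varphi(Z)}$ is the closure of $\tilde\varphi(\overline Z)$, which is $P$-stable because $\tilde\varphi$ is $P$-equivariant. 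Applying Proposition~\ref{prop:PequivariantLorbit} once more, the cone of $\tilde\varphi(Z)$ lies in $\Fan(X')^\uparrow$. Hence $\Fan(X)^\uparrow$ maps to $\Fan(X')^\uparrow$.

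\emph{Sufficiency.} By Knop's theorem it suffices to show that every $(\Cone_W,\Colemb_W)\in\Fan(X)$ maps to some element of $\Fan(X')$. Any $G$-extension obtained this way is automatically $P$-equivariant, as observed above.

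Fix a $G$-orbit $W\subseteq X$ and let $Z=W^\uparrow$. We reduce to $P$-simple pieces. Set
\[
X_{PW,P}=\{x\in X\mid \overline{Px}\supseteq PW\}.
\]
By hypothesis $(\Cone_Z,\Colemb_Z)$ maps to some $(\Cone_{Z'},\Colemb_{Z'})\in\Fan(X')^\uparrow$. Let $X'_0=X'_{PZ',P}$, which is $P$-simple. The colored cones of all $P$-orbits of $X_{PW,P}$ are $P$-invariant faces of $(\Cone_Z,\Colemb_Z)$. Their images lie in faces of $\Cone_{Z'}$, and these faces are $P$-invariant colored faces of $(\Cone_{Z'},\Colemb_{Z'})$, i.e.\ $P$-orbits of $X'_0$.

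So it suffices to treat $X$ and $X'$ both $P$-simple, with closed $P$-orbits $Z$ and $Z'$. Assume $(\Cone_{Z^\uparrow},\Colemb_{Z^\uparrow})$ maps to $(\Cone_{Z'^\uparrow},\Colemb_{Z'^\uparrow})$. Imitating the proof of Proposition~\ref{prop:Psimpleuniqueness}, let $X_0\subseteq X$ and $X'_0\subseteq X'$ be the complements of the $B$-stable prime divisors not containing $Z$, respectively $Z'$. These are affine $B$-stable open subsets meeting every $P$-orbit. It is enough to show $\varphi^*\CC[X'_0]\subseteq\CC[X_0]$. Then $\varphi$ extends to $X_0\to X'_0$, and translating by $P$ gives an extension $X=PX_0\to PX'_0=X'$.

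To get the inclusion, use the description
\[
\CC[X_0]=\{f\in\CC[X_1]\mid v(f)\geq0\ \forall v\in\Cone_{Z^\uparrow}\}
\]
from Proposition~\ref{prop:Psimpleuniqueness}, together with the analogous one for $X'_0$. Both reduce to $B$-eigenvectors by~\cite[Corollary~1.7]{Kn91}.

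For $B$-eigenvectors $f\in\CC[X'_0]^{(B)}$, the valuation conditions transfer via $\varphi_*(\Cone_{Z^\uparrow})\subseteq\Cone_{Z'^\uparrow}$. The pole conditions transfer via $\varphi(\Colemb_{Z^\uparrow}\smallsetminus\Colemb_\varphi)\subseteq\Colemb_{Z'^\uparrow}$: a color of $X_1$ that is not in $\Colemb_{Z^\uparrow}$ either maps dominantly, or maps to a color outside $\Colemb_{Z'^\uparrow}$ and hence avoids the poles. The $G$-stable divisors of $P/H$ that do not contain $Z^\uparrow$ are handled in the same way.

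\emph{Main obstacle.} The delicate point is checking that $X_0$ is really determined by $\Cone_{Z^\uparrow}$ alone, rather than by the larger cone $\Cone_{Z^\downarrow}$. Equivalently, the $B$-stable prime divisors containing $Z$ are exactly those containing $Z^\uparrow$, which holds because $Z=\overline{Z^\uparrow}\cap Z$. A second point is that the divisors of $P/H$ entering $\Cone_{Z^\uparrow}$, namely elements of $\rho(\Div(P/H)^G)$ and colors, are controlled by the colored-map condition. I would verify this using part~(\ref{prop:PequivariantLorbit:Pequivariant}) of Proposition~\ref{prop:PequivariantLorbit}, which says the generators of these cones come from $\Colemb$, from $\rho(\Div(P/H)^G)$, and from $V_P$. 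For the pieces coming from $V_P$, I would use that $\varphi_*$ sends $V_P(P/H)$ into $V_P(P/K)$ by Corollary~\ref{cor:Pvalcone}.
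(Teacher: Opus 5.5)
Your route is the same as the paper's. Necessity uses Knop's Theorem~4.1 together with the $P$-stability of $\overline{\tilde\varphi(Z)}$. Sufficiency reduces to $P$-simple pieces, passes to the affine $B$-charts $X_0,X'_0$, proves $\varphi^*\CC[X'_0]\subseteq\CC[X_0]$, and then translates by $P$. The necessity part and the reduction are fine.

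\textbf{The gap.} It lies in deriving the inclusion $\varphi^*\CC[X'_0]\subseteq\CC[X_0]$, and this is the real difficulty, not the $\Cone_{Z^\uparrow}$ versus $\Cone_{Z^\downarrow}$ issue you flag. Knop's Corollary~1.7 reduces non-negativity to $B$-eigenvectors only for $G$-invariant valuations. It therefore cannot reduce your pole conditions along colors to eigenvectors. On eigenvectors the color hypothesis is superfluous: an eigenvector in $\CC[X'_0]$ has weight in $(\Cone_{(Z')^\uparrow})^\vee\subseteq(\Cone_{Z^\uparrow})^\vee$ by the cone condition alone. Since you first invoke Corollary~1.7 and then check the pole conditions on eigenvectors, the color hypothesis is used only where it is redundant.

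Yet the color hypothesis is indispensable. Take $P=G=\SL(2)$, $H=U$ a maximal unipotent subgroup of $B$, $K=B$, $X=\CC^2$, $X'=\PP^1$. Then $X_0=\CC^2$, $X'_0\cong\AA^1$, and $\CC[X'_0]^{(B)}$ consists of constants. So the eigenvector inclusion and the cone condition both hold, but $\varphi$ does not extend.

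Two further problems:
\begin{itemize}
\item Your color sentence is reversed. The colors meeting $X_1$ are exactly those \emph{in} $\Colemb_{Z^\uparrow}$, and the hypothesis says each of them maps dominantly or onto a color \emph{in} $\Colemb_{(Z')^\uparrow}$.
\item The $G$-stable divisors of $P/H$ that matter are those \emph{containing} $Z$. No color-type hypothesis covers them, so they are not ``handled in the same way''.
\end{itemize}
The paper's own sketch compresses this step in the same fashion.

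\textbf{A repair.} Take an arbitrary $f\in\CC[X'_0]$ and check $\Ord_E(\varphi^*f)\ge 0$ for every prime divisor $E$ of $X$ meeting $X_0$.
\begin{itemize}
\item If $E$ is $G$-stable (a boundary divisor, or a $G$-stable divisor of $P/H$ containing $Z$), then $v_E\circ\varphi^*$ is a $G$-invariant valuation of $\CC(P/K)$ with $\rho$-image $\varphi_*\rho(E)\in\Cone_{(Z')^\uparrow}$. Hence it is non-negative on $\CC[X'_0]^{(B)}$, and so on $\CC[X'_0]$ by Corollary~1.7. This is the only case where eigenvectors suffice.
\item If $E\in\Colemb_{Z^\uparrow}$, the color hypothesis sends the generic point of $E$ into $X'_0$, where $f$ is regular.
\item If $E$ is not $B$-stable, it is not contained in $\varphi^{-1}\bigl((P/K)\smallsetminus X'_0\bigr)$. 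That set is a union of $B$-stable prime divisors because $\varphi$ is flat, so again the generic point of $E$ lands in $X'_0$.
\end{itemize}

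\textbf{A shorter alternative.} Apply Knop's Theorem~4.1 as a black box to the $G$-simple open subsets $\{x\in X\mid \overline{Gx}\supseteq Z^\uparrow\}$ and $\{x\in X'\mid \overline{Gx}\supseteq (Z')^\uparrow\}$. Their maximal colored cones are the two given ones. Then extend the resulting $G$-map to all of $X$ by $P$-equivariance and separatedness. This works because every $P$-orbit $O\subseteq X$ satisfies $\overline{O^\uparrow}\supseteq Z$, so $O^\uparrow$ lies in the first subset.
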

\begin{proof}
Suppose $\varphi$ extends to $\varphi\colon X\to X'$. Then $\Fan(X)$ maps to $\Fan(X')$ as we have recalled. It remains to show that each $(\Cone,\Colemb)\in\Fan(X)^\uparrow$ maps to some element of $\Fan(X')^\uparrow$.

This holds because each such $(\Cone,\Colemb)$ is of the form $(\Cone,\Colemb)=(\Cone_Z,\Colemb_Z)$ for a $G$-orbit $Z$ with $P$-stable closure, so $\varphi(Z)\subseteq X'$ is a $G$-orbit with $P$-stable closure, therefore $\left(\Cone_{\varphi(Z)},\Colemb_{\varphi(Z)}\right)$ is in $\Fan(X')^\uparrow$.

Viceversa, suppose that $\Fan(X)^\uparrow$ maps to $\Fan(X')^\uparrow$. The proof here is similar to the proof of~\cite[Theorem~4.1]{Kn91}.

We may assume that $X$ and $X'$ are $P$-simple, so $\Fan(X)^\uparrow$ has a unique maximal element $(\Cone,\Colemb)$, and it maps to the unique maximal element $(\Cone',\Colemb')$ of $\Fan(X')^\uparrow$. 

Consider $X_0\subseteq X$ as in the proof of Theorem~\ref{thm:existencePsimple} and $X'_0\subseteq X'$ defined similarly. Since $(\Cone,\Colemb)$ maps to $(\Cone',\Colemb')$, it is easy to check that $\varphi^*$ sends $\CC[X_0]^{(B)}$ into $\CC[X'_0]^{(B)}$. Together with~\cite[Corollary~1.7]{Kn91} this implies that $\varphi^*$ sends $\CC[X_0]$ into $\CC[X_0]$, so $\varphi$ induces a regular map $X_0\to X'_0$ and by equivariance a map $X=PX_0\to X'=PX'_0$. 
\end{proof}

We conclude this section with two additional results, both generalizing known facts from the reductive case: the linear part of the valuation cone is related to the $P$-equivariant automorphisms of $P/H$ (see e.g.~\cite[Theorem~6.1]{Kn91}), and the valuation cone of $P$-stable subvarieties is computed easily as a projection of $V_P(P/H)$ (see~\cite[Theorem~1.1]{GH15}).

\begin{proposition}
The group of $P$-equivariant automorphisms of $P/H$ is a diagonalizable group, of dimension equal to the dimension of the linear part $V_P(P/H)\cap (-V_P(P/H))$ of $V_P(P/H)$.
\end{proposition}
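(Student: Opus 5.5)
Following Knop's argument for~\cite[Theorem~6.1]{Kn91}, I will identify the automorphism group of $P/H$ and then compute its dimension with the classification of $P$-embeddings.

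\textbf{Structure of the group.} We have $\mathrm{Aut}_P(P/H)\cong N_P(H)/H$. Since $N_P(H)/H$ acts on $P/H$ commuting with $P$, it also commutes with $G$. So it embeds in $N_G(H_0)/H_0$, where $H_0=G\cap H$ and the open $G$-orbit of $P/H$ is $G/H_0$. By Lemma~\ref{lemma:uniqueLorbits} this open $G$-orbit is unique, so it is preserved. By the reductive theory, $N_G(H_0)/H_0$ acts faithfully on $\CC(G/H_0)^{(B)}/\CC^*$ through characters. This gives an injective homomorphism $\mathrm{Aut}_P(P/H)\to\Hom(\Xi(P/H),\CC^*)$, well defined up to the finite part. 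Hence $\mathrm{Aut}_P(P/H)$ is diagonalizable, being a closed subgroup of a diagonalizable group.

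\textbf{Dimension.} Let $A$ denote the identity component of $\mathrm{Aut}_P(P/H)$, a torus, with cocharacter lattice $\Lambda$.
\begin{enumerate}
\item A one-parameter subgroup $\lambda\colon\CC^*\to A$ defines a $P\times\CC^*$-action on $P/H$. The associated valuation (order of vanishing along the boundary of $P/H\times\CC^*$, or via $\lim_{t\to0}$) is $P$-invariant, because $A$ commutes with $P$. Its image under $\rho$ is the element of $N(P/H)$ given by pairing with $\lambda$, read through the action of $A$ on $B$-eigenvectors. The same holds for $\lambda\inv$.
\item By Corollary~\ref{cor:Pvalcone}, both $\rho(\lambda)$ and $-\rho(\lambda)$ then lie in $V_P(P/H)$. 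This gives an injective linear map $\Lambda\otimes\QQ\to V_P\cap(-V_P)$, and so $\dim A\le\dim$ of the linear part.
\item Conversely, the linear part $L$ is a subspace contained in $V_P(P/H)$, so $(L,\varnothing)$ is a $P$-equivariant colored subspace. By~\cite[Theorem~7.6]{Pe17} it corresponds to a $P$-equivariant map $P/H\to P/K$ with $H\subseteq K$. The fibers of this map should be tori of dimension $\dim L$ given by $K/H$, with $K\subseteq N_P(H)$. I would verify this by following Knop's proof: the map $\varphi_*\colon N(P/H)\to N(P/K)$ has kernel $L$, the map has no colors, and so $K/H$ is a torus with $\dim K/H=\dim L$. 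Then $K/H$ acts on $P/H$ by right multiplication as $P$-automorphisms, which gives the reverse inequality.
\end{enumerate}

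\textbf{Main obstacle.} Step 3 is the one I expect to be hardest. It requires showing that $K$ normalizes $H$ and that $K/H$ is a torus. For this I would invoke the analogue of the reductive fact that the colored subspace $(L,\varnothing)$ yields $K\subseteq N_G(H_0)$, transported through~\cite[Theorem~7.6]{Pe17}, and use that $K$ is generated by $H$ together with $K\cap G$.
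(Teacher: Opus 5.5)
\textbf{There is a genuine gap in Step~3, the lower bound.}

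Your proof of diagonalizability is fine. So is your upper bound $\dim A\le\dim\bigl(V_P(P/H)\cap -V_P(P/H)\bigr)$, obtained from one-parameter subgroups and Corollary~\ref{cor:Pvalcone}. One harmless correction: $x_0$ need not lie in the open $G$-orbit, so $H_0$ should be the stabilizer in $G$ of a point of that orbit.

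The problem is in Step~3. This is exactly where the hypothesis $L\subseteq V_P(P/H)$ must do real work; knowing only that $L\subseteq V_G(P/H)\cap -V_G(P/H)$ and that $(L,\varnothing)$ is $P$-equivariant is not enough. Neither of your two justifications uses it.

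\emph{First justification.} The inference ``$\ker\varphi_*=L$ and no color maps dominantly, hence $K/H$ is a torus'' is false for non-reductive $P$. A colorless $P$-equivariant subspace may be generated by valuations of $G$-stable divisors of $P/H$, and these are not $P$-invariant. In Example~\ref{ex:P2}, $(N(P/H),\varnothing)$ is such a subspace: it is generated by $v_1,v_2\in\rho(\Div(P/H)^G)$ and $v_3\in V_P(P/H)$. It corresponds to $K=P$, and $K/H\cong\AA^2$ is not a torus.

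\emph{Fallback plan.} The same example defeats it. There $K\cap G=G$ is a torus, so it normalizes $G\cap H$. Taking $x_0$ in the open $G$-orbit, $K=P$ is generated by $H$ and $K\cap G$. Yet $P$ does not normalize $H$. The reductive theorem only says that $(K\cap G)/H_0$ acts by $G$-automorphisms of the open $G$-orbit. Nothing in your argument shows these extend to $P$-automorphisms of $P/H$, i.e.\ that $K\cap G$ normalizes $H$ and not just $H_0$. Also, ``$K\subseteq N_G(H_0)$'' is not meaningful as written, since $K\not\subseteq G$.

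\textbf{How the paper avoids this, and how to close Step~3.} The paper's proof runs Knop's argument for [Kn91, Theorem~6.1] verbatim with $P$ in place of $G$. Thus $P$-invariant valuations and $V_P(P/H)$ play the role of the valuation cone, with Theorem~7.6 of [Pe17] as input. It never descends to $G/H_0$. Concretely, you can finish Step~3 as follows.

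\begin{enumerate}
\item Take a primitive $v\in L$. Both $\pm v$ lie in $V_P(P/H)$, so Theorem~\ref{thm:classification} gives a $P$-embedding $X_v$ with $\Fan(X_v)^\uparrow=\{(\{0\},\varnothing),(\QQ_{\geq0}v,\varnothing),(\QQ_{\geq0}(-v),\varnothing)\}$. As in the proof of Corollary~\ref{cor:Pvalcone}, $X_v\smallsetminus P/H$ consists of two disjoint $P$-stable prime divisors $D_\pm$.
\item Let $K_v$ correspond to $(\QQ v,\varnothing)$. By the theorem on morphisms in Section~\ref{s:morphisms}, the map $P/H\to P/K_v$ extends to $X_v\to P/K_v$.
\item Its fiber $F$ over $eK_v$ is a normal curve, since the rank drops by one and no color maps dominantly. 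It contains the homogeneous curve $K_v/H$ and meets both $D_+$ and $D_-$.
\item Hence $K_v/H\cong\CC^*$, $F\cong\PP^1$, and $K_v$ fixes the two boundary points. So $K_v$ acts through $\mathrm{Aut}(\PP^1;0,\infty)\cong\CC^*$, and the kernel of this homomorphism is the stabilizer $H$ of $x_0$. Thus $K_v\subseteq N_P(H)$.
\item The resulting one-parameter subgroup of $A$ acts trivially on the $B$-eigenvectors pulled back from $P/K_v$, whose weights span $v^\perp$. By your Step~1 and injectivity, its image in $N(P/H)$ is a nonzero multiple of $v$.
\end{enumerate}

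Doing this for a basis of $L$ gives $\dim A\geq\dim L$.
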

\begin{proof}
Subgroups $K\subseteq P$ containing $H$ and such that $K/H$ is connected correspond to $P$-equivariant colored subspaces in $N(P/H)$, thanks to~\cite[Theorem~7.6]{Pe17}. At this point the proof of Theorem~6.1 in~\cite{Kn91} holds verbatim with our $P$ instead of the reductive group $G$ of loc.cit., yielding the proposition.
\end{proof}

\begin{proposition}
Let $X$ be a $P$-embedding of $P/H$, and let $Z\subseteq X$ be a $P$-stable subvariety. Then the lattice $\Xi(Z)$ is the common vanishing locus in $\Xi(X)$ of all elements of $\Cone_Z$, and we have
\[
\pi_*\left(V_P(P/H)\right) = V_P(Z)
\]
where $\pi_*\colon N(X)\to N(Z)$ is the natural map dual to the inclusion $\Xi(Z)\to \Xi(X)$.
\end{proposition}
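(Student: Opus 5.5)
The plan is to reduce to the case where $Z$ is the closure of a single $P$-orbit, and then to compare $P$-invariant valuations on the two function fields. Here $\Cone_Z$ means the cone of the dense $G$-orbit $W=Z^\uparrow$ of the dense $P$-orbit in $Z$.

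\emph{The lattice statement.} This is a statement about $G$-orbits only, since $\Xi(Z)=\Xi(W)$. Replacing $X$ by the $G$-stable open subset of points whose $G$-orbit closure contains $W$, we get a simple $G$-embedding with closed orbit $W$. Its colored cone is $(\Cone_W,\Colemb_W)$, so the standard reductive result applies (\cite[Theorem~1.1]{GH15}, or the description of $\Xi$ of orbits in \cite{Kn91}). It gives that $\Xi(W)$ is the set of $\chi\in\Xi(X)$ vanishing on $\Cone_W$. So the first assertion follows directly from the $G$-theory.

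\emph{The cone equality.} I would argue via Corollary~\ref{cor:Pvalcone}, applied both to $P/H$ and to the open $P$-orbit of $Z$. The model is the proof of \cite[Theorem~1.1]{GH15}, or the toroidal argument of \cite[Proposition~6.6]{Pe17}, where we already saw that $V_P(\overline Z)$ is obtained from $V_P(P/H)$ for $G$-orbits in a toroidal embedding.

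\emph{Step 1: reduce to a $P$-toroidal complete embedding.} Take $X''\to X'\supseteq X$ as in the proof of Proposition~\ref{prop:VPisPinvariant}, with $X''$ complete and $P$-toroidal. Choose a $G$-orbit $W''\subseteq X''$ with $P$-stable closure that maps onto $W$ and has $\Cone_{W''}$ minimal, exactly as in the proof of Proposition~\ref{prop:PequivariantLorbit}. Then $\Cone_{W''}$ is contained in $\Cone_W$ and meets its relative interior. The map $\overline{W''}\to Z$ is dominant and $P$-equivariant, so $V_P(\overline{W''})$ maps onto $V_P(Z)$. This uses Corollary~\ref{cor:Pvalcone} together with the standard fact that invariant valuations extend along dominant equivariant maps, as in \cite[Lemma~7.4]{Pe17} or via the $P$-equivariant colored subspace theory \cite[Theorem~7.6]{Pe17}. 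Because the projections are compatible, it suffices to prove the statement for $\overline{W''}\subseteq X''$.

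\emph{Step 2: the toroidal case.} In $X''$, the $P$-stable closures of $G$-orbits containing $W''$ correspond to the cones of $\Fan(X'')$ that contain $\Cone_{W''}$ and lie in $V_P(P/H)$. The closure $\overline{W''}$ is itself a complete $P$-toroidal embedding of its open $P$-orbit. Its $G$-orbits with $P$-stable closure have cones equal to the projections $\pi_*$ of those cones. This is the local structure of toroidal embeddings near $W''$, where $\overline{W''}$ is locally a product with a toric variety. Taking the union in the definition of $V_P$ then gives $V_P(\overline{W''})=\pi_*\bigl(\bigcup\Cone\bigr)=\pi_*(V_P(P/H))$. The inclusion ``$\supseteq$'' also needs that the cones not containing $\Cone_{W''}$ project inside this union. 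This follows from convexity of $V_P(P/H)$ together with the fact that these cones cover $V_P(P/H)$.

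\textbf{Main obstacle.} I expect the hardest step to be the inclusion $V_P(Z)\subseteq\pi_*(V_P(P/H))$ in the non-toroidal setting. The point is to show that every $P$-invariant valuation of $\CC(Z)$ lifts to a $P$-invariant valuation of $\CC(X)$. My first attempt would be the complete toroidal reduction above, relying on \cite[Proposition~6.6]{Pe17}. If that fails, I would fall back on constructing, for $c\in V_P(Z)$, a $P$-simple embedding as in Theorem~\ref{thm:existencePsimple} with cone $\Cone_W+\QQ_{\geq0}\tilde c$ for a suitable lift $\tilde c\in V_P(P/H)$.
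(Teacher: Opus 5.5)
Your proof is correct. Its overall frame matches the paper's. You reduce to a complete $P$-toroidal $X''$ mapping properly to a completion of $X$. In the toroidal case you describe the $G$-orbits with $P$-stable closure in $\overline{W''}$; the paper simply quotes \cite[Proposition~6.6]{Pe17} for this. You then finish with the same convexity/segment argument.

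Where you differ is the passage from $X''$ back to $Z$.

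\emph{The paper's route.} It picks $Z''$ over $Z$ with $\dim\Cone_{Z''}=\dim\Cone_Z$, so that $\Xi(Z'')=\Xi(Z)$. It then invokes \cite[Proposition~7.8]{Pe17}, outside that result's stated connected-fibre hypothesis, to get $V_P(Z'')=V_P(Z)$.

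\emph{Your route.} You take any $W''$ over $W$ with $P$-stable closure and allow the rank to drop along $\overline{W''}\to Z$. You factor $\pi_*$ through $N(\overline{W''})$, which is legitimate since all the lattices are subgroups of the character group of $B$. You then prove that $V_P(\overline{W''})$ surjects onto $V_P(Z)$:
\begin{enumerate}
\item Restricting $P$-invariant valuations gives one inclusion.
\item For the other inclusion, the argument that actually works should be stated explicitly. Lift a $P$-invariant valuation of $\CC(Z)$ to $\CC(P)$ by \cite[Lemma~7.4]{Pe17}, restrict it to $\CC(\overline{W''})\subseteq\CC(P)$, and apply Corollary~\ref{cor:Pvalcone} to both homogeneous spaces. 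The appeal to \cite[Theorem~7.6]{Pe17} is not what does the work.
\end{enumerate}

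\emph{What each buys.} Your route rests only on results already used in this paper. It does not need an orbit of the same rank as $Z$ lying over $Z$. The paper's existence claim for such a $Z''$ needs $\Cone_Z\cap V_P(P/H)$ to have full dimension in the span of $\Cone_Z$. That is more than $\Cone_Z^\circ\cap V_P(P/H)\neq\varnothing$, and the paper passes over it quickly. The paper's route, when that choice is available, gives the sharper identification $\Xi(Z'')=\Xi(Z)$ and an outright equality of cones in the same space, rather than a surjectivity statement.
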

\begin{proof}
The statement regarding $\Xi(Z)$ is well-known for spherical varieties. It remains to prove the equality $\pi_*\left(V_P(P/H)\right) = V_P(Z)$.

Suppose first that $X$ is also complete and $P$-toroidal. In~\cite[Proposition~6.6]{Pe17} it is shown that $V_P(Z)$ is the image in $N(Z)$ of the union
\[
\bigcup_W\Cone_W \subseteq N(X)
\]
where $W$ varies in the set of $G$-orbits with $P$-stable closure and satisfying $\Cone_W\supseteq \Cone_Z$. For any $G$-orbit $V$ with $P$-stable closure, the segment joining any element of $\Cone_V$ and any element of $\Cone_Z$ must be in $V_P(X)$, since $V_P(X)$ is convex. We deduce that the relative interior of this segment must intersect some cone of the fan of the form $\Cone_W$ where $W$ is as above. At this point it is elementary to deduce that $V_P(Z)$ is simply the image $\pi_*(V_P(X))$.

Let now $X$ be an arbitrary $P$-embedding of $P/H$. As already done above, let $X'$ be a $P$-equivariant completion of $X$ and let $X''$ be a complete $P$-toroidal embedding of $P/H$ such that the identity of $P/H$ extends to a proper $P$-equivariant regular map $\varphi\colon X''\to X'$. Let $Z''\subseteq X''$ be a $G$-orbit such that $\varphi\left(Z''\right)=Z$; any such $Z''$ clearly has $P$-stable closure, and we already noticed that $(\Cone_{Z''},\Colemb_{Z''})$ maps to $(\Cone_Z,\Colemb_Z)$, in particular $\Cone_Z''$ is contained in $\Cone_Z$.

Since $X''$ is complete and $P$-toroidal, the union of all cones appearing in $\Fan\left(X''\right)^\uparrow$ coincides with $V_P(P/H)$. On the other hand the colored cone $(\Cone_Z,\Colemb_Z)$ is $P$-invariant, so the relative interior of $\Cone_Z$ intersects $V_P(P/H)$. As a consequence, we may choose $Z''$ as above, with the additional requirement that $\Cone_{Z''}$ has the same dimension of $\Cone_Z$.

The first part of the proof yields the equalities $\Xi(Z)=\Xi\left(Z''\right)$ and $V_P\left(Z''\right)=\pi_*\left(V_P(P/H)\right)$, it remains to show $V_P\left(Z''\right)=V_P(Z)$. This stems from~\cite[Proposition~7.8]{Pe17}, which is stated under the assumption that the restriction $\varphi\colon Z''\to Z$ has connected fibers, but the same proof of loc.cit. holds verbatim even without this assumption.
\end{proof}

\section{The ring of regular functions as a $P$-module}\label{s:tails}

Let $X$ be a spherical $G$-variety. If $X$ is affine, one defines the {\em weight monoid} $\Gamma(X)$ of $X$ to be the set of highest weights appearing in $\CC[X]$ as a $G$-module. It is a submonoid of the monoid of all dominant weights, and corresponds to the decomposition
\[
\CC[X] = \bigoplus_{\lambda\in\Gamma(X)} V(\lambda)
\]
of $\CC[X]$ in irreducible summands, where $V(\lambda)$ has highest weight $\lambda$. With respect to this decomposition, in general $\CC[X]$ may fail to be a graded ring, and it is well-known that the set of spherical roots $\Sigma_G(X)$ provides a ``global measure'' of this failure.

In the next proposition we give a version of this result where $X$ is a $P$-embedding. In this case the set of spherical roots $\Sigma_P(X)$ is involved.

\begin{theorem}\label{thm:Ptails}
Suppose $X$ is an affine $P$-embedding of $P/H$. Let $\T\subseteq \Xi(X)\otimes_\ZZ\QQ$ be the convex cone generated by all $B$-weights that can be written as
\[
\lambda + \mu -\nu
\]
where $V(\nu)$ is contained in the product%
\footnote{By this notation we mean the the submodule generated by all products $pv_1\cdot qv_2$ where $v_1\in V(\lambda)$, $v_2\in V(\mu)$, and $p,q\in P$.}
$(P\cdot V(\lambda))\cdot(P\cdot V(\mu))$ inside $\CC[X]$. Then $\T$ is the convex cone generated by $\Sigma_P(X)$.
\end{theorem}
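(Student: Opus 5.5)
We prove the equality by showing that the two cones have the same dual. Equivalently, we show that for $v\in N(X)$ the condition $\<\tau,v\>\leq 0$ for all $\tau\in\T$ holds if and only if $v\in V_P(X)$. Since $V_P(X)$ is cut out by the inequalities $\<\sigma,-\>\leq 0$ with $\sigma\in\Sigma_P(X)$, biduality then gives $\T=\QQ_{\geq0}\Sigma_P(X)$. This mirrors the standard reductive argument (Knop's description of $V_G$ via the ``tails'' $\lambda+\mu-\nu$). The new ingredient is that we use $P$-stable subspaces $P\cdot V(\lambda)$ in place of $G$-modules, together with Corollary~\ref{cor:Pvalcone}, which identifies $V_P(X)$ with the $P$-invariant valuations.

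\emph{Step 1: $-\T^\vee\supseteq V_P(X)$.} Let $v$ be a $P$-invariant discrete valuation of $\CC(X)$; such valuations fill $V_P(X)$ by Corollary~\ref{cor:Pvalcone}. By $P$-invariance, $v$ takes a constant value on the nonzero elements of $P\cdot V(\lambda)$ in general position, namely its value on the highest weight vector $f_\lambda$, which is $\<\lambda,\rho(v)\>$. More precisely, $v(f)\geq\<\lambda,v\>$ for every $f\in P\cdot V(\lambda)$, by the usual semicontinuity argument together with the fact that $B$-eigenvectors realize the minimum (\cite[Section~1]{Kn91}). Products $pv_1\cdot qv_2$ then have value at least $\<\lambda+\mu,v\>$. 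Hence every element of the product module has value at least $\<\lambda+\mu,v\>$, and in particular so does its $B$-eigenvector of weight $\nu$. This gives $\<\nu,v\>\geq\<\lambda+\mu,v\>$, that is, $\<\lambda+\mu-\nu,v\>\leq0$.

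\emph{Step 2: $-\T^\vee\subseteq V_P(X)$.} Take $v\in N(X)$ nonpositive on $\T$. We build a valuation $w$ by declaring, for $f=\sum f_\nu$ decomposed into $G$-isotypic components, $w(f)=\min_\nu\<\nu,v\>$. We then check three things:
\begin{enumerate}
\item $w$ is a valuation. The key point is $w(fg)=w(f)+w(g)$; the inequality $\geq$ follows from the hypothesis. Equality is obtained by looking at the leading components, which are $B$-eigenvector products for suitably chosen highest weight parts; this is the same step as in Knop's proof.
\item $w$ is $G$-invariant, which holds by construction.
\item $w$ is $P$-invariant. For $p\in P$ and $f\in V(\lambda)$, the translate $pf$ lies in $P\cdot V(\lambda)=P\cdot V(\lambda)\cdot P\cdot V(0)$. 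Its components $V(\nu)$ satisfy $\<\nu-\lambda,v\>\geq0$ by the hypothesis applied with $\mu=0$, so $w(pf)\geq w(f)$. Applying the same argument to $p^{-1}$ gives equality, and the general case follows by splitting into components.
\end{enumerate}
Since $\rho(w)=v$, Corollary~\ref{cor:Pvalcone} gives $v\in V_P(X)$. The valuation is extended to $\CC(X)$ because $X$ is affine, so $\CC(X)$ is the fraction field of $\CC[X]$.

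The main obstacle is the multiplicativity in point 1 above. The hypothesis only controls weights $\nu$ occurring in products of $P$-modules, and one must ensure that the minimal-weight components do not cancel. For this I would restrict to $B$-eigenvectors, whose products are nonzero because $X$ is irreducible, and use that $f_\lambda f_\mu=f_{\lambda+\mu}$ appears with nonzero coefficient. This reduces the question to the reductive case of Knop (see \cite[Section~5]{Kn91}), where $\lambda+\mu$ is the leading term.
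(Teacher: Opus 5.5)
Your Step 1 is the paper's argument for $\T\subseteq\QQ_{\geq 0}\Sigma_P(X)$. Your Step 2 is correct but takes a genuinely different route.

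For the reverse inclusion the paper stays geometric. It reruns the construction in the proof of Theorem~\ref{thm:existencePsimple} for the ray $(\QQ_{\geq 0}v,\varnothing)$, choosing $f_0\in\CC[P/H]$ and using the tail condition in place of the estimate from \cite[Lemma~7.5]{Pe17}. This produces a $P$-embedding with a $P$-stable prime divisor whose valuation is proportional to $v$.

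You instead write down the candidate valuation $w(f)=\min\{\<\nu,v\> : f_\nu\neq 0\}$ on $\CC[X]$ and check its properties directly. This avoids redoing the embedding construction, and it is more informative. Multiplicativity becomes a purely reductive matter, while $P$-invariance uses only the tails with $\mu=0$. So your argument shows that $\T$ is already generated by the reductive tails together with the differences $\lambda-\nu$ with $V(\nu)\subseteq P\cdot V(\lambda)$. That is exactly the form in which the theorem is used in Example~\ref{ex:noncosimplicial}. The paper's route, in exchange, yields an explicit embedding realizing $v$, consistent with its classification machinery.

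One step needs to be stated properly. Your sketch of $w(fg)\leq w(f)+w(g)$ via $B$-eigenvectors does not work as written. The leading parts of general $f,g$ are not $B$-eigenvectors, and $f_\lambda f_\mu\neq 0$ alone does not exclude cancellation. What that observation really gives is that the $B^u$-invariants of the associated graded ring of $w$ form the semigroup algebra of $\Gamma(X)$. You then still need the standard fact that this forces the graded ring to be a domain.

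The cleanest fix is the reduction you allude to:
\begin{itemize}
\item Since $V(\lambda)\cdot V(\mu)\subseteq (P\cdot V(\lambda))\cdot(P\cdot V(\mu))$, the reductive tail cone $\T_G$ is contained in $\T$.
\item Hence $v$ is nonpositive on $\T_G$ and lies in $V_G(X)$ by the reductive theorem.
\item The $G$-invariant valuation with image $v$ coincides with $w$ on $\CC[X]$. It is constant on each $V(\nu)\smallsetminus\{0\}$, and each isotypic component of $f$ lies in the span of $G\cdot f$ by multiplicity-freeness.
\end{itemize}
So your worry that the hypothesis ``only'' controls products of $P$-modules is backwards: it is stronger than the reductive hypothesis, and no new cancellation issue arises.

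Finally, Step 1 needs no semicontinuity. $P\cdot V(\lambda)$ is spanned by $P$-translates of the highest weight vector, all of value $\<\lambda,v\>$.
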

\begin{proof}
Let $f\in V(\nu), f_1\in V(\lambda), f_2\in V(\mu)$ be highest-weight vectors, with $\lambda,\mu,\nu$ as in the statement. Such $f$ is a linear combination of the form
\[
f = \sum_{i=1}^n (a_i p_i \cdot f_1)\cdot (b_iq_i\cdot f_2) 
\]
where $a_i,b_i\in\CC$ and $p_i,q_i\in P$. Then any $P$-invariant valuation satisfies
\[
\nu(f)\geq \nu(f_1)+\nu(f_2)
\]
so
\[
\nu\left(\frac{f_1 f_2}{f}\right)\leq 0.
\]
This shows that $\T$ is contained in the cone generated by $\Sigma_P(X)$.

For the opposite inclusion we go to the dual cones, and we prove the inclusion $\T^\vee\subseteq V_P(X)$. Let $v\in\T^\vee$ be a non-zero vector, define $\Cone = \QQ_{\geq 0}v$ and $\Colemb=\varnothing$. The proof of Theorem~\ref{thm:existencePsimple} can be carried out for $(\Cone,\Colemb)$ almost verbatim. Precisely, the function $f_0$ of that proof can be choosen in $\CC[P/H]$ since $P/H$ is quasi-affine, so $V$ of loc.cit.\ is actually a $P$-submodule of $\CC[P/H]$, and the inequality $\<\chi, c\>\geq 0$ of loc.cit.\ follows from the definition of $\T$.

The construction given by that proof produces a $P$-embedding $Y$ of $P/H$ with a $P$-stable prime divisor whose valuation is a positive rational multiple of $v$: this shows that $v$ is in $V_P(P/H)=V_P(X)$.
\end{proof}

\begin{example}
Let $X=\CC$ where $P=\CC^*\ltimes \CC$, $H=G=\CC^*$ acting with weight $1$ on $\CC$. Then $P/H$ is spherical under the action of $G$, with open $G$-orbit $\CC\smallsetminus\{0\}$ and $P^u=\CC$ acting by translations. Then
\[
\CC[X] = \bigoplus_{n\geq 0} \CC \cdot z^n
\]
where $\CC\cdot z^n = V(-n)$ as usual. The smallest $P$-submodules are
\[
\CC \cdot 1 \subsetneq \CC \cdot 1 \oplus \CC z \subsetneq \CC[X].
\]
Notice that $V(-1)^*$ is isomorphic to $P^u$ as a $G$-module, on the other hand $V(-1)$ is not a $P$-submodule of $\CC[X]$. Example~\ref{ex:toric} shows that $-1$ is the $P$-spherical root of $X$, and we notice that this corresponds to the fact that the $P$-submodule generated by $V(-1)$ is $V(0)\oplus V(-1)$, and we see the spherical root $-1$ exactly as the difference between the two weights.
\end{example}

If we set $P=G$ in Theorem~\ref{thm:Ptails} then we recover one of the standard versions of the result. We underline that it is a ``global'' information: given specific weights $\lambda,\mu$, $\nu$, knowing that $\lambda+\mu-\nu$ is in the cone $\T$ does not assure that $V(\nu)$ appears as a summand of $V(\lambda)\cdot V(\mu)$ in $\CC[X]$. A general combinatorial criterion characterizing whether it appears seems one of the most challenging open problems in the theory of spherical varieties.

Interestingly, if $P$ contains $G$ strictly instead, the action of $P$ does provide some more local information which is not available in the purely reductive case. But this information is related to $G$-submodules in $\CC[X]$ that are not $P$-submodules, instead of being related to the multiplication in $\CC[X]$.

\begin{proposition}\label{prop:Ptails}
Let $X$ be an affine $P$-embedding of $P/H$, suppose $H^u\subseteq P^u$, let $Z\subseteq X$ be the unique closed $G$-orbit, and let $\lambda\in\Gamma(X)$. If $\lambda\notin\Gamma(Z)$ then the $G$-submodule $V(\lambda)\subseteq \CC[X]$ is not $P$-stable, i.e., there exists $\nu\in\Gamma(X)$ with $\nu\neq \lambda$ such that $V(\nu) \subseteq P\cdot V(\lambda)$.
\end{proposition}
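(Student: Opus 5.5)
We argue by contraposition: assume $V(\lambda)$ is $P$-stable and deduce $\lambda\in\Gamma(Z)$. The natural tool is the restriction map $\CC[X]\to\CC[Z]$. This map is $G$-equivariant and surjective, since $Z$ is closed in the affine variety $X$. Its kernel is the ideal $I(Z)$. Hence $\Gamma(Z)$ consists exactly of those $\lambda\in\Gamma(X)$ for which $V(\lambda)\not\subseteq I(Z)$; here we use that $\CC[X]$ is multiplicity free, as $X$ is affine and $G$-spherical. So it suffices to show the following: if $V(\lambda)$ is $P$-stable, then $V(\lambda)$ is not contained in $I(Z)$.

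The key step is to identify $Z$ from the $P$-action. By Lemma~\ref{lemma:uniqueLorbits}, the closed $P$-orbit $Y$ of $X$ is unique, because $X$ is affine. Moreover $Y$ contains a unique closed $G$-orbit $Y^\downarrow$, which is closed in $X$ and therefore equals $Z$. In particular $Z\subseteq Y$.

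Now suppose $V(\lambda)$ is $P$-stable and contained in $I(Z)$. Then every $f\in V(\lambda)$ vanishes on $Z$. Since $V(\lambda)$ is $P$-stable, $p\cdot f$ also vanishes on $Z$ for all $p\in P$, so $f$ vanishes on $PZ=Y$. Thus $V(\lambda)$ restricts to zero on $Y$, and we get no contradiction yet. So the real content must be finer. We use that $P^u$ acts on $V(\lambda)$; since it is unipotent and normalized by $G$, a $P$-stable irreducible $G$-module must have trivial $P^u$-action. Indeed, the $P^u$-invariants form a nonzero $G$-submodule, hence everything. Therefore $V(\lambda)\subseteq\CC[X]^{P^u}$.

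The main step is then to show that $P^u$-invariant functions are determined by their restriction to $Z$, i.e.\ that $\CC[X]^{P^u}\cap I(Z)=0$. For this we use $H^u\subseteq P^u$: the image of $H$ in $P/P^u\cong G$ is reductive up to finite index. We consider the quotient $P/H\to P/HP^u$, which is an affine $G$-homogeneous space $G/K$ with $K$ reductive. Then $\CC[X]^{P^u}\subseteq\CC[P/H]^{P^u}=\CC[P/HP^u]$. Next, take the categorical quotient $X\to X/\!\!/P^u$, or work with the $P^u$-invariant functions directly. A nonzero $P^u$-invariant $f$ factors through $G/K$, so its restriction to each $G$-orbit in the closure of the $P^u$-direction is controlled. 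We would show that $Z$ maps onto the unique closed $G$-orbit of $\operatorname{Spec}\CC[X]^{P^u}$. Then the invariant $G$-submodule $V(\lambda)$, being nonzero on that affine $G$-variety, is nonzero on its closed orbit by the multiplicity-free structure of $\Gamma$ and the fact that highest weight functions vanishing on the closed orbit generate a proper submodule.

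I expect this last step to be the main obstacle, namely proving that $V(\lambda)\subseteq\CC[X]^{P^u}$ forces nonvanishing on $Z$. If the above approach does not go through, I would fall back on $B$-eigenfunctions. Take a highest weight vector $f\in V(\lambda)$ that is $P^u$-invariant and vanishes on $Z$. Its zero set is then $P^u$-stable and $B$-stable, and it contains $Z$. Using $H^u\subseteq P^u$ and Lemma~\ref{lemma:uniqueLorbits}(2), we would derive a contradiction from the fact that every $B$-stable divisor containing $Z$ must contain a $P$-orbit, hence would contain $Y$, which meets $P/H$.
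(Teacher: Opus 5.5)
Your reduction is correct: $\lambda\notin\Gamma(Z)$ if and only if $V(\lambda)\subseteq I(Z)$, and a $P$-stable $V(\lambda)$ is fixed pointwise by $P^u$. But there is a genuine gap at the step you call the main one, and it is never proved. The claim $\CC[X]^{P^u}\cap I(Z)=0$ fails for a general affine $P$-embedding. So does the supporting claim that a nonzero $G$-submodule of functions cannot vanish on the closed $G$-orbit. Take $P=G=\CC^*$, $H=\{1\}$, $X=\CC$. Then $Z=\{0\}$, $P^u$ is trivial, and $z\in\CC[X]^{P^u}\cap I(Z)$. In this example $V(\lambda)=\CC z$ is $P$-stable although $\lambda\notin\Gamma(Z)$. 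So the proposition is false as soon as $X\neq P/H$, and no argument for an arbitrary affine embedding can be completed. Your fallback has the same defect: it rests on ``$Y$ meets $P/H$'', which you do not justify and which holds only when $Y=X=P/H$.

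The missing point is that the proposition, as its proof in the paper shows, is about $X=P/H$ itself. Take $L\subseteq G$ a Levi subgroup of $H$; since $H^u\subseteq P^u$, one has $P/H\cong G\times^L(P^u/H^u)$, which is affine with closed $G$-orbit $Z\cong G/L$. The paper computes $\CC[X]^{P^u}=\CC[P]^{HP^u}=\CC[G/L]$, which restricts isomorphically onto $\CC[Z]$, so a $P$-stable $V(\lambda)$ forces $\lambda\in\Gamma(Z)$.

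Once you use that $X$ is a single $P$-orbit, your own third paragraph already finishes the proof. There $Y=PZ=X$, so a $P$-stable $V(\lambda)\subseteq I(Z)$ vanishes identically, a contradiction. You wrote ``no contradiction yet'' exactly where the contradiction was available. That route is shorter than the paper's and does not need the computation of $P^u$-invariants. The same argument also proves the statement for any affine $X$ if $Z$ is taken to be the closed $G$-orbit of $P/H$, since then $PZ=P/H$ is dense in $X$.
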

\begin{proof}
Let $L$ be a Levi subgroup of $H$ contained in $G$, then $X$ is $G$-equivariantly isomorphic to the equivariant vector bundle
\[
X \cong G\times^L (P^u/H^u)
\]
and the closed $G$-orbit $Z$ is $G$-equivariantly isomorphic to $G/L$.

It is a standard fact that the irreducible submodule $V(\lambda)\subseteq \CC[X]$ is $P$-stable if and only if the unipotent radical $P^u$ acts trivially on $V(\lambda)$. This is also equivalent to $V(\lambda)$ being contained in $\CC[X]^{P^u}=(\CC[P]^H)^{P^u}=\CC[P]^{HP^u}=\CC[P/(HP^u)]=\CC[G/L]$. So in our case $V(\lambda)$ is not $P$-stable.
\end{proof}

\begin{example}\label{ex:noncosimplicial}
We construct an example where the set of spherical roots $\Sigma_P(X)$ is not linearly independent over $\QQ$, so the convex cone $V_P(X)$ is not cosimplicial.

Let $G$ and $X$ be as in Example~\ref{ex:classificationsr}, and consider the $G$-module
\[
V=V(\omega_1+\delta)\oplus V(\omega_2+\delta).
\]
Thanks to our particular choice of weights, each summand is itself $L$-equivariantly isomorphic to $\CC^2$. For $i\in\{1,2\}$ let $\pi_i\colon V\to \CC^2$ be the projection along the other summand.

Define as before $P=G\ltimes P^u$ but with $P^u\cong V$, and let $P$ act on $X$ by setting
\[
gu\cdot [x,v] = \left[gx,v+\pi_1\left(\ell\left(x\inv *u\right)\right)+\pi_2\left(\ell\left(x\inv *u\right)\right)\right].
\]
As in Example~\ref{ex:classificationsr} one shows that the difference $\Sigma_P(X)\smallsetminus \Sigma_G(X)$ is not empty and contained in the set $\{\pm\omega_1-\delta,\pm\omega_2-\delta\}$. The inclusion $V_G(X)\supseteq V_P(X)$ implies that $\Sigma_G(X)=\{\alpha_1,\alpha_2\}$ is contained in the convex cone generated by $\Sigma_P(X)$.

Fix a highest weight vector $\eta$ of the dual of $\CC^2$, considered as an $L$-module and choosing the Borel subgroup $\diag(B_1)\times \CC^*$ of $L$. We define two regular functions $f_1,f_2\in \CC[X]$ as
\[
\begin{matrix}
f_i\colon & X & \to & \CC\\
& [(x_1,x_2,s),v] & \mapsto & \eta(x_isv)
\end{matrix}
\]
with $x=(x_1,x_2,s)\in G$. It is elementary to check that $f_i$ is well-defined and a $B$-eigenvector of $B$-eigenvalue $\omega_i-\delta$, which is therefore an element of the weight monoid of $X$. Let $u_i\in P^u$ be such that $\ell(u_i)$ is a highest weight vector of the summand $V(\omega_{i}+\delta)$ of $V$, and let us apply $u_{3-i}$ to the function $f_i$. We obtain the function
\[
(u_{3-i}\cdot f_i)\colon [(x_1,x_2,s),v] \mapsto  \eta\left(x_1s\left(v-\pi_1\left(\ell\left(x\inv *u_{3-i}\right)\right)-\pi_2\left(\ell\left(x\inv * u_{3-i}\right)\right)\right)\right).
\]
This function is the difference of $f_i$ itself and the function
\[
\begin{matrix}
F_i\colon & X & \to & \CC\\
& [(x_1,x_2,s),v] & \mapsto & \eta\left(x_is\left(\pi_1\left(\ell\left(x\inv *u_{3-i}\right)\right)+\pi_2\left(\ell\left(x\inv *u_{3-i}\right)\right)\right)\right).
\end{matrix}
\]
We have
\[
\pi_{3-i}\left(\ell\left(x\inv *u_{3-i}\right)\right) = \pi_{3-i}\left(x\inv \ell(u_{3-i})\right)=x\inv \ell(u_{3-i})=
(x_{3-i})\inv \ell(u_{3-i})
\]
because $x\inv \ell(u_{3-i})$ is already in $V(\omega_{3-i}+\delta)$ and because the $i$-th factor of $G$ acts trivially on $V(\omega_{3-i}+\delta)$. So $F_i$ is actually the function
\[
[(x_1,x_2,s),v]  \mapsto \eta\left(x_is (x_{3-i})\inv s\inv \ell(u_{3-i})\right).
\]
One checks easily that $F_i$ is a $B$-semiinvariant of weight $\omega_1+\omega_2$ and that it is not the zero function%
\footnote{Compare this to a similar computation yelding $u_i\cdot f_i=f_i$ instead.}.

This yields
\[
P\cdot V(\omega_i-\delta) \supseteq V(\omega_i-\delta)\oplus V(\omega_1+\omega_2)
\]
in the ring $\CC[X]$. Thanks to Theorem~\ref{thm:Ptails} applied to $\lambda=\omega_i-\delta$ and $\mu=0$, for all $i$ the difference
\[
\omega_{i}-\delta- (\omega_1+\omega_2)=-\omega_{3-i}-\delta
\]
is in the convex cone generated by $\Sigma_P(X)$.

Finally, the element $-\omega_i-\delta$ is not contained in the convex cone generated by $\alpha_1$, $\alpha_2$, $\omega_i-\delta$, $\pm\omega_{3-i}-\delta$, so $-\omega_i-\delta$ must be an element of $\Sigma_P(X)$ for all $i$. Analogously, the simple root $\alpha_i$ is not in the convex cone generated by $\alpha_{3-i}$, $\pm\omega_1-\delta$, $\pm\omega_{2}-\delta$, so $\alpha_i$ must be an element of $\Sigma_P(X)$.

Since the elements $\omega_i-\delta$ are in the convex cone generated by $\alpha_1$, $\alpha_2$, $-\omega_1-\delta$, $-\omega_2-\delta$ instead, we conclude
\[
\Sigma_P(X)=\{\alpha_1,\alpha_2,-\omega_1-\delta,-\omega_2-\delta\}.
\]

A similar procedure yields
\[
\Sigma_P(X)=\{\alpha_1,\alpha_2,-\mu-\delta\}
\]
for the same variety $X$ and the group $P$ of Example~\ref{ex:classificationsr}.
\end{example}

\section{Characterization of log-homogeneous embeddings}\label{s:Plog}

\begin{definition}[\cite{Br07} and~\cite{BDP90}]
Let $P$ be a connected linear algebraic group, and let $X$ be a smooth irreducible $P$-variety.
\begin{enumerate}
\item Let $D\subseteq X$ be a divisor with normal crossings. Denote by $\T_X$ the tangent sheaf of $X$ and by
\[
\T_X(-\log D)
\]
the subsheaf given by the derivations that preserve the ideal sheaf $\O_X(-D)$. Suppose the action of $P$ preserves the divisor $D$, and consider the natural map
\[
\op_{X,D}\colon \O_X\otimes \fp \to \T_X(-\log D)
\]
induced by the action, where $\fp$ is the Lie algebra of $P$. Then the pair $(X,D)$ is {\em $P$-homogeneous} if $\op_{X,D}$ is surjective. In this case $D$ is uniquely determined by $X$ and $P$, being the union of all $P$-stable prime divisors (see Remark~2.1.1 in~\cite{Br07}), hence we will also simply say that $X$ is {\em log $P$-homogeneous}.
\item We say that $X$ is {\em $P$-regular} if the following conditions holds:
\begin{enumerate}
\item the group $P$ has on $X$ an open orbit, whose complement is a union of irreducible divisors with normal crossings, called {\em boundary divisors},
\item any $P$-orbit closure in $X$ is the transversal intersection of some of the boundary divisors,
\item for all $x\in X$ the normal space $T_x X/T_x(P\cdot x)$ contains an open orbit of the stabilizer $P_x$.
\end{enumerate}
\end{enumerate}
\end{definition}

We recall some results by Brion from \cite{Br07} on these two notions.

\begin{theorem}\label{thm:brion}
Let $P$ be a connected linear algebraic group with Levi subgroup $G$ and let $X$ be a smooth irreducible $P$-variety.
\begin{enumerate}
\item (Corollary~2.1.4 in \cite{Br07}.) If $X$ is $P$-regular then it is log $P$-homogeneous.
\item\label{thm:brion:loghom} (Part of Theorem~3.2.1 in \cite{Br07}.) If $X$ is complete and log $P$-homogeneous, then
\begin{enumerate}
\item\label{thm:brion:loghom:spherical} the variety $X$ is $G$-spherical, and
\item\label{thm:brion:loghom:XL} there exists an open $G$-stable and $G$-regular subset $X_G\subseteq X$ such that every $P$-orbit of $X$ intersects $X_G$ along a unique $G$-orbit.
\end{enumerate}
\end{enumerate}
\end{theorem}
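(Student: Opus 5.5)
Both parts of the statement are results of Brion~\cite{Br07}, and in the paper I would simply cite them. If I had to reprove them, I would proceed as follows.

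For part~(1), I would check surjectivity of $\op_{X,D}$ fibrewise at each $x\in X$: both sheaves are locally free, and Nakayama's lemma reduces the claim to a statement about fibres. Let $D_1,\ldots,D_r$ be the boundary divisors through $x$, and choose local coordinates $z_1,\ldots,z_n$ with $D_i=\{z_i=0\}$. The fibre $\T_X(-\log D)(x)$ is spanned by the classes of $z_1\partial_{z_1},\ldots,z_r\partial_{z_r}$ and $\partial_{z_{r+1}},\ldots,\partial_{z_n}$. It therefore sits in an exact sequence
\[
0\to \CC^r\to \T_X(-\log D)(x)\to T_x(D_1\cap\cdots\cap D_r)\to 0 .
\]
By condition~(b), the orbit $P\cdot x$ is open in $D_1\cap\cdots\cap D_r$, so the image of $\fp$ maps onto the quotient. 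It remains to see that $\fp_x$ maps onto the kernel $\CC^r$. The normal space $T_xX/T_x(P\cdot x)$ is the direct sum of the lines normal to the $D_i$, and $\fp_x$ acts on it diagonally, with image the residues in $\CC^r$. Condition~(c) says that $P_x$ has an open orbit there, so this diagonal image must be all of $\CC^r$.

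For part~(2), the first step would be to show that $X$ has finitely many $P$-orbits and that they are exactly the strata of the normal crossing divisor $D$. Surjectivity of $\op_{X,D}$ onto the quotient $T_x(D_1\cap\cdots\cap D_r)$ makes each $P$-orbit open in its stratum, and the strata are finite in number. Hence $X$ is a complete embedding of an open orbit $P/H$. For sphericity~(\ref{thm:brion:loghom:spherical}), I would follow Brion's route: let a Borel subgroup of $P$ (equivalently $B\ltimes P^u$) act, and use that $\T_X(-\log D)$ is generated by global sections coming from $\fp$. One then shows, via a Bia\l ynicki-Birula type argument at a $B$-fixed point of a closed orbit, that $B$ has an open orbit. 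The point is that at such a fixed point the log tangent space is spanned by $\fp$, and its weights can be controlled.

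Finally, for~(\ref{thm:brion:loghom:XL}), I would take $X_G$ to be the union of the open $G$-orbits $Z^\uparrow$ of all $P$-orbits $Z$, cf.\ Lemma~\ref{lemma:uniqueLorbits}. I would then check that this set is open and $G$-regular, using the local structure at points of $Z^\uparrow$. I expect the sphericity step~(\ref{thm:brion:loghom:spherical}) to be the main obstacle, since it needs the full strength of completeness together with the global generation.
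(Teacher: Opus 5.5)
Your proposal matches the paper, which gives no proof of this statement and simply quotes Corollary~2.1.4 and Theorem~3.2.1 of \cite{Br07}. Your optional reproof of part~(1), via the residue sequence and the diagonal action of $P_x$ on the normal space, is sound; your sketch for part~(2) is only an outline, for example the openness of $\bigcup_Z Z^\uparrow$ genuinely needs log homogeneity, since it fails for $\PP^2$ in Example~\ref{ex:P2}, so citing Brion as the paper does is the right call.
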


We complete the above picture in the following theorem.

\begin{theorem}\label{thm:Plog}
Let $P$ be a connected linear algebraic group with Levi subgroup $G$, and let $X$ be a smooth complete irreducible $P$-variety. Then $X$ is log $P$-homogeneous if and only if $X$ is $G$-spherical and $P$-toroidal.
\end{theorem}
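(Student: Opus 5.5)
We prove the two implications separately, relying on Brion's results recalled in Theorem~\ref{thm:brion} and on the criterion for $P$-toroidality proved at the end of the section on the classification of $P$-embeddings.

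\emph{Log $P$-homogeneous implies $G$-spherical and $P$-toroidal.} $G$-sphericity is part~(\ref{thm:brion:loghom:spherical}) of Theorem~\ref{thm:brion}. For $P$-toroidality, let $X_G$ be as in part~(\ref{thm:brion:loghom:XL}). Since $X_G$ is $G$-regular, it is log $G$-homogeneous, and by the known reductive theory it is $G$-toroidal. Let $D\in\Div(X)^B$ contain a $P$-orbit $Z$. Then $D\cap X_G$ is non-empty, because it contains the $G$-orbit $Z\cap X_G$, and it contains that $G$-orbit. By $G$-toroidality of $X_G$, the divisor $D\cap X_G$ is $G$-stable.

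It remains to upgrade this to $P$-stability. The boundary of a log $P$-homogeneous pair is the union of the $P$-stable prime divisors, and $X\smallsetminus D_{\mathrm{bdry}}$ is the open $P$-orbit. I would argue as follows. If $D$ were not $P$-stable, it would meet the open $P$-orbit $P/H$ as a $G$-stable divisor, so $\rho(D)\in\rho(\Div(P/H)^G)$. Meanwhile $Z\cap X_G$ is a $G$-orbit with $P$-stable closure, and in the $G$-regular $X_G$ its closure is a transversal intersection of $G$-stable boundary divisors of $X_G$. Surjectivity of $\op_{X,D}$ at a point $z\in Z\cap X_G$ should force the $G$-stable divisors through $z$ to be exactly the $P$-stable ones. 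The reason is that the tangent directions normal to $Z$ must be reached by $\fp$ modulo logarithmic tangents, and a non-$P$-stable component of the $G$-boundary through $z$ would give a normal direction along which $\fp$ moves off the log-tangent space. Making this tangent-space count precise is the main obstacle. Concretely, I would compare $\dim T_zX/T_z(Pz)$ with the number of $P$-stable boundary divisors through $z$; these are equal for log homogeneous pairs, since $\T_X(-\log D)$ surjects onto the tangent space of the orbit. I would then compare this with the number of $G$-stable divisors of $X_G$ through the $G$-orbit, which equals the codimension of $Gz$ by $G$-regularity. After that, I would use the inclusion $\Cone_{Z^\uparrow}\subseteq V_P(P/H)$.

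\emph{$G$-spherical and $P$-toroidal implies log $P$-homogeneous.} By the criterion above, every cone of $\Fan(X)^\uparrow$ lies in $V_P(P/H)$ and has no colors. Since $X$ is smooth and complete, I would show that $X$ is $P$-regular and then apply part~(1) of Theorem~\ref{thm:brion}. I would set $X_G$ to be the union of the $G$-orbits with $P$-stable closure together with the orbits in between. By Proposition~\ref{prop:PequivariantLorbit}, the corresponding colored cones are colorless and lie in $V_P\subseteq V_G$, so $X_G$ is a smooth $G$-toroidal embedding, hence $G$-regular by Bifet--De~Concini--Procesi. The boundary divisors of $X$ are the $P$-stable prime divisors. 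Their $\rho$-images generate the rays of the cones in $\Fan(X)^\uparrow$, and smoothness of $X$ along $Z^\uparrow$ makes these cones simplicial, generated by part of a basis. Consequently each $P$-orbit closure $\overline{Z}$ is the transversal intersection of the $P$-stable divisors containing it, since this holds at points of $Z^\uparrow$ in the $G$-regular $X_G$ and transversality is $P$-invariant.

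The normal crossing property of the full boundary follows in the same way. The stabilizer condition~(c) holds at $z\in Z^\uparrow$ because $G_z$ already has an open orbit in the normal space, and it propagates to all of $Z$ by $P$-equivariance. Hence $X$ is $P$-regular, and therefore log $P$-homogeneous by Corollary~2.1.4 of~\cite{Br07}.
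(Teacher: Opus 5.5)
\paragraph{The forward implication has a gap.} The gap is exactly the step you flag: you never show that the $G$-stable divisor $D$ is $P$-stable. The tangent-space heuristic is not carried out. The closing appeal to $\Cone_{Z^\uparrow}\subseteq V_P(P/H)$ is not available either: by the $P$-toroidality criterion at the end of the classification section, that inclusion is part of what you are trying to prove.

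The paper closes this step by citing \cite[Corollary~6.4]{Pe17}. You can close it directly from part~(\ref{thm:brion:loghom:XL}) of Theorem~\ref{thm:brion}, with no tangent spaces.
\begin{itemize}
\item Let $D_0$ be the open $G$-orbit of $D$. Since $D\cap X_G\neq\varnothing$ and $X_G$ is open and $G$-stable, $D_0$ meets $X_G$ and hence $D_0\subseteq X_G$.
\item The set $PD_0\cap X_G$ is a single $G$-orbit containing $D_0$, so it equals $D_0$. It is also open in $PD_0$.
\item Hence $D_0$ is dense in $PD_0$, and $D=\overline{D_0}=\overline{PD_0}$ is $P$-stable.
\end{itemize}

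Your dimension count can also be finished. Take $z\in Z\cap X_G$.
\begin{itemize}
\item Log homogeneity makes $Pz$ open in the stratum cut out by the $r$ $P$-stable divisors through $z$, so $Z$ has codimension $r$.
\item $G$-regularity of $X_G$ gives that $Gz$ has codimension $s$, the number of $G$-stable prime divisors of $X_G$ through $z$.
\item Since $Gz=Z\cap X_G$ is open in $Z$, we get $r=s$.
\item The $P$-stable divisors through $z$ are among the $G$-stable ones, so they are all of them.
\end{itemize}

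\paragraph{Your converse is correct, by a different route.} The paper gets $X_G$ from \cite[Corollary~6.4]{Pe17}, shows it is $G$-regular via the local structure of toroidal varieties, and then applies the converse half of Theorem~3.2.1 of \cite{Br07}.

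You instead take $X_G$ to be the union of the $G$-orbits with $P$-stable closure.
\begin{itemize}
\item By the $P$-toroidality criterion and Proposition~\ref{prop:PequivariantLorbit}, this set is closed under passing to orbits whose closure contains a given one. Hence it is open, and ``the orbits in between'' are superfluous.
\item It meets each $P$-orbit $Z$ exactly in $Z^\uparrow$, and all its colored cones are colorless.
\item You then verify $P$-regularity by $P$-translating the $G$-regularity of $X_G$, using $T_z(Gz)=T_z(Pz)$ for $z\in Z^\uparrow$.
\item You finish with Corollary~2.1.4 of \cite{Br07}.
\end{itemize}
This gives the stronger conclusion that $X$ is $P$-regular, and it does not use completeness in this direction. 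The cost is checking the regularity axioms by hand, where the paper simply invokes Brion's characterization.

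One detail should be added: the complement of $P/H$ is of pure codimension one. For a $P$-orbit $Z\neq P/H$, the colorless cone $\Cone_{Z^\uparrow}$ is nonzero, so some $G$-stable prime divisor contains $Z^\uparrow$ and hence $Z$. That divisor is $P$-stable by $P$-toroidality.
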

\begin{proof}
Suppose $X$ is log $P$-homogeneous. Then $X$ is $G$-spherical by Theorem~\ref{thm:brion}, part~(\ref{thm:brion:loghom:spherical}). Let $X_G$ be as in Theorem~\ref{thm:brion}, part~(\ref{thm:brion:loghom:XL}). We claim that $X_G$ is $G$-toroidal, and this follows from the proof of Lemma~3.1.2 in~\cite{Br07} and Proposition~2.2.1 in the paper~\cite{BiBr96} by Bien and Brion.

More precisely, the local structure of $X_G$ is explained in details in~\cite{Br07}: for a point $x$ of any closed $G$-orbit, one takes the stabilizer $Q\subseteq G$ of $x$. It is a parabolic subgroup of $G$, and we can assume it contains $B$. A neighborhood of $x$ in $X_G$ is $Q$-equivariantly isomorphic to a product $Q^u\times \AA^n$, where a Levi subgroup $M$ of $Q$ acts by conjugation on $Q^u$ and linearly on $\AA^n$ via $n$ linearly independent characters. The unipotent radical $Q^u$ acts by translation only on the first factor.

As in~\cite{BiBr96}, one notices then that the coordinate hyperplanes of this affine space $\AA^n$, i.e.\ the $M$-stable prime divisors, are the intersections of $\AA^n$ with the $G$-stable prime divisors of $X_G$ containing $x$, and there are no other $B$-stable prime divisors in this open chart. This shows that any $B$-stable prime divisor containing $x$ is $G$-stable, therefore $X_G$ is $G$-toroidal. Finally, the embedding $X$ is $P$-toroidal thanks to~\cite[Corollary~6.4]{Pe17}.

Viceversa, suppose $X$ is $G$-spherical and $P$-toroidal. Then there exists an open $G$-stable subset $X_G$ such that $X_G$ is a $G$-toroidal embedding of the open $G$-orbit of $X$, and $X_G$ intersects any $P$-orbit of $X$ in a single $G$-orbit, thanks to~\cite[Corollary~6.4]{Pe17}. The local structure of $G$-toroidal varieties, such as $X_G$ here, is well-known to be as above again. It follows easily, as in~\cite{BiBr96}, that $X_G$ is $G$-regular.

By the second part of Theorem~3.2.1 in~\cite{Br07} we conclude that $X$ is log $P$-homogeneous.
\end{proof}

\end{document}